\newcommand{\be}{\begin{equation}}
\newcommand{\bea}{\begin{equation}\begin{aligned}}
\newcommand{\beas}{\begin{equation*}\begin{aligned}}
\newcommand{\eeas}{\end{aligned}\end{equation*}}
\newcommand{\eea}{\end{aligned}\end{equation}}
\newcommand{\ee}{\end{equation}}
\begin{document}
\begin{frontmatter}
\title{
On the Sobolev stability threshold for 3D Navier-Stokes equations \\ with rotation near the Couette flow}

\author{Wenting Huang$^1$}

\author{Ying Sun$^2$}

\author{Xiaojing Xu$^{1}$}

\address{1. School of Mathematical Sciences, Beijing Normal University, Beijing, 100875, PR  China}

\address{2. School of Science, Beijing University of Posts and Telecommunications, Beijing, 100876, PR  China}
	
\tnotetext[2]{E-mails:  hwting702@163.com (W. Huang), sunying@bnu.edu.cn(Y. Sun), xjxu@bnu.edu.cn (X. Xu). }


\begin{abstract}
Rotation is a crucial characteristic of fluid flow in the atmosphere and oceans, which is present in nearly all meteorological and geophysical models. The global existence of solutions to the 3D Navier-Stokes equations with large rotation has been established through the dispersion effect resulting from Coriolis force (i.e., rotation). In this paper, we investigate the dynamic stability of periodic, plane Couette flow in the three-dimensional Navier-Stokes equations with rotation at high Reynolds number $\mathbf{Re}$. Our aim is to determine the stability threshold index on $\mathbf{Re}$: the maximum range of perturbations within which the solution  remains stable. Initially, we examine the linear stability effects of a linearized perturbed system. Comparing our results with those obtained by Bedrossian, Germain, and Masmoudi [Ann.  Math. 185(2): 541--608 (2017)], we observe that mixing effects (which correspond to enhanced dissipation and inviscid damping) arise from Couette flow while Coriolis force acts as a restoring force inducing a dispersion mechanism for inertial waves that cancels out lift-up effects occurred at zero frequency velocity. This dispersion mechanism exhibits favorable algebraic decay properties distinct from those observed in classical 3D Navier-Stokes equations. Consequently, we demonstrate that if initial data satisfies $\left\|u_{\mathrm{in}}\right\|_{H^{\sigma}}<\delta \mathbf{Re}^{-1}$   for any $\sigma>\frac{9}{2}$ and some $\delta=\delta(\sigma)>0$ depending only on $\sigma$, then the solution to the 3D Navier-Stokes equations with rotation is  global in time without transitioning away from Couette flow. In this sense, Coriolis force contributes as a factor enhancing fluid stability by improving its threshold from $\frac{3}{2}$ to 1.
\end{abstract}

\begin{keyword}
Navier-Stokes equations with rotation; Couette flow; stability threshold; dispersion mechanism.
\MSC[2020] 35Q35,  76U05,  76E07,  76F10.
\end{keyword}
\end{frontmatter}

\newtheorem{thm}{Theorem}[section]
\newtheorem{lem}{Lemma}[section]
\newtheorem{pro}{Proposition}[section]
\newtheorem{concl}{Conclusion}[section]
\newtheorem{cor}{Corollary}[section]
\newproof{pf}{Proof}
\newdefinition{rem}{Remark}[section]
\newtheorem{definition}{Definition}[section]

\tableofcontents

\section{Introduction}\label{introud}
\numberwithin{equation}{section}

\textit{1.1. Presentation of the problem.}
The stability and instability of laminar shear flows at high Reynolds numbers have been a significant and active area of research in applied fluid mechanics since the early experiments conducted by Reynolds. In three-dimensional (3D) hydrodynamics, one of the most prevalent phenomena is {subcritical transition}: when a laminar flow becomes unstable and transitions to turbulence in experiments or computer simulations at sufficiently high Reynolds numbers, despite potentially being spectrally stable. In recent years, numerous mathematicians have also focused their efforts on researching mathematical theories related to this problem and have made substantial contributions. In 1887, Kelvin  \cite{Kelvin1887} proposed that the flow may exhibit stability; however, the stability threshold decreases as $\nu \rightarrow 0$, leading to transition at a finite Reynolds number. Subsequently, an intriguing and crucial question was first posed by Trefethen et al. \cite{TTRD1993} is that:
\begin{itemize}
	\item  Given a norm $\|\cdot\|_{X}$, find a $\gamma=\gamma(X)$ so that
	\begin{align*}
	\left\|  u_{\mathrm{in}}  \right\|_{X}& \leqslant \; \mathbf{Re}^{-\gamma } \,\, \Rightarrow  \text{ stability}, \nonumber \\
	\left\|  u_{\mathrm{in}}  \right\|_{X}&  \gg  \;\mathbf{Re}^{-\gamma } \,\, \Rightarrow  \text{ instability},
	\end{align*}
	where the exponent $\gamma=\gamma (X)>0$  is called the \textit{stability threshold}. And it is also known as the \textit{transition threshold} in almost all applied literature. Obviously, the \textit{transition threshold} problem is more stringent and complicated than the nonlinear stability problem.
\end{itemize}
Later on, numerous studies have been conducted to estimate $\gamma$ through physical experiments and numerical simulations \cite{Lundbladh1994, MR1886008, MR2736445, MR1631950, MR3185102}. Recently, rigorous mathematical results regarding the stability threshold problem for the Couette flow in classical Navier-Stokes equations have emerged \cite{MR3612004, MR4126259, BGM2022, CWZ2024, MR4373161}, along with relevant references. 

Rotation is a fundamental characteristic of fluid flow in the atmosphere and oceans, which is present in almost all meteorological and geophysical models. One of the major challenges lies in determining whether there exists a global smooth solution to the 3D Navier-Stokes equations with general smooth initial data. However, this problem has already been positively resolved for 3D Navier-Stokes equations with large rotation (also see \cite{MR1400515}). Evidently, the effect of rotation supports higher regularity in solutions. This paper focuses on investigating stable dynamics within the following 3D incompressible Navier-Stokes equations with rotation, also known as Navier-Stokes-Coriolis equations
\begin{align}\label{1.1}
 (NSC) \quad \quad  \begin{cases}
\partial_{t} V-\nu \Delta V+V \cdot \nabla V+\beta e_{3} \times V+\nabla P=0, \\
\nabla \cdot V=0, \\
V(t=0)=V_{\mathrm{in}},
\end{cases} 
\end{align}
where $V=\left(V^1, V^2, V^3\right)^{\text{T}} \left(t, x, y, z \right) \in \mathbb{R}^3$ and $P=P(t, x, y, z) \in \mathbb{R}$ are the velocity field and  the pressure of the fluid, respectively. The term $e_3=(0, 0, 1)^{\text{T}}$ is the unit vector in the vertical direction. The term $\beta e_{3} \times V$ denotes the Coriolis force with the Coriolis parameter $\beta \in \mathbb{R}\backslash\{0\}$. The positive constant $\nu$ is the kinetic viscosity which is the inverse of Reynolds number, i.e., $\nu=\mathbf{Re}^{-1}$. Note that when $\beta=0$, \eqref{1.1} corresponds to the classical Navier-Stokes equations.

Obviously, the Couette flow at high Reynolds number regime, which may be not only the simplest  nontrivial stationary solution to the 3D  Navier-Stokes equations, but also a steady solution to  (NSC). In this paper,  we  study the dynamic stability behavior of solutions to  (NSC) around the nontrivial stationary solution (which is known as Couette flow)
\begin{equation}\label{1.2}
V^s=(y, 0, 0)^{\mathrm{T}},  \quad \partial_{y} P^s=-\beta y
\end{equation}
with $\beta \in \mathbb{R}\backslash\{0\}$.
To avoid the boundary effect, we consider the problem in a simple domain $(t, x, y, z) \in \mathbb{R}_{+} \times \mathbb{T} \times \mathbb{R} \times \mathbb{T}$ (the torus $\mathbb{T}$ is the periodized interval $[0, 1]$). We denote $u$ for the perturbation of the Couette flow, that is, we set  $V=( y, 0,0)^{\mathrm{T}}+u$, then it satisfies
\begin{equation}\label{1.3}
\begin{cases}
\partial_{t} u+ y \partial_{x} u+\begin{pmatrix}
(1-\beta)u^2 \\
\beta u^1 \\
0
\end{pmatrix}-\nu \Delta u+\nabla p^{L} =-u \cdot \nabla u-\nabla p^{NL}, \\
\nabla \cdot u=0,\\
u(t=0)=u_{\mathrm{in}},
\end{cases}
\end{equation}
where the initial data $u_{\mathrm{in}}=V_{\mathrm{in}}-(y, 0,0)^{\mathrm{T}}$, and the pressure $p^{NL}$ and $p^{L}$ are determined by 
\begin{align}
\Delta p^{NL}&=-\partial_{i}u^j \partial_{j}u^i, \label{1.4} \\
\Delta p^{L}&=(\beta-2)\partial_{x} u^2-\beta\partial_{y} u^1. \label{1.5}
\end{align}
The term $p^{NL}$  is  nonlinear contribution to the pressure from the convective term and   $p^{L}$  is linear contribution to the pressure from the interaction between the disturbance near the Couette flow and  the rotation of fluid.

The motion of fluids strongly affected by rotation is fundamentally distinct from that of non-rotating fluids. In order to elucidate or predict phenomena in atmospheres, oceans, planets, and astrophysics, scientists are increasingly focusing on the study of rotating fluids. Simultaneously, research on rotating fluid also encompasses fundamental problems and can be extended from the stability analysis of centrifuges to that of rotating spacecraft. The rotational effect has recently been recognized as crucial in prolonging solution lifespans. Thus, it is not surprising that numerous articles have addressed theoretical or experimental analyses of this system.

Under large-scale rotation, both the atmosphere and ocean can be treated as thin nearly two-dimensional spherical shell fluid layers or unbounded fluid layers. Rotation typically exhibits anisotropic relations and dispersion characteristics leading to amplitude decay. Due to these effects, extensive investigations have been conducted regarding the global existence of solutions for (NSC) when rotation is significant and initial data represents a large perturbation around a zero stationary state (i.e., $V^s=(0, 0, 0)^{\mathrm{T}}$). Relevant studies include those by Babin et al. \cite{MR1736966, MR1480996, MR1855663}, Chemin et al. \cite{MR2228849}, Iwabuchi and Takada \cite{MR3468733, MR3096523, MR3229792}, Koh et al. \cite{MR3229600}, among others.

Even in cases where there is no viscous term $\nu\Delta V$, the rotational effect with dispersion mechanism still significantly contributes to extending the existence time of solutions for Euler equations as demonstrated in previous works such as \cite{KLT2014, T2016, JW2020, GPW2023, GHPW2023} and references therein.

A natural and interesting problem is to study the stabilization mechanism of (NSC).    We know that   the rotation of the fluid not only makes the global well-posedness of (NSC),  but also brings a strong coupling relationship between the equations.
In particular, \eqref{1.3} exhibits both dispersion which is generated by the rotation and mixing effects, which are the main contributions of the fluid stability. A crucial challenge hereby lies in the fact that although one may expect that the stability arose  from a combination of such mechanisms, their distinct nature makes itself difficult to treat in a combined fashion and need to  the quantitative extra analysis.

\textit{1.2. Background.}
Let us first review the research on the stability results of the Navier-Stokes equations. 
Of course,  there are many results for the stability of 2D Navier-Stokes or other fluid systems near Couette flow,  see  \cite{MR3867637, MR3448924, DWXZ2021, MR4176913, MR4451473, MR3974608, BHIW1-2024, BHIW2-2024,MR4121130,CWZ2023} and so on, but few in the 3D case.  In fact, the vortex structure of the 2D Navier-Stokes equations has a good effect on the dynamics stability. In addition,  as early as 1975, Ellingsen and Palm \cite{Ellingsen1975} investigated a fundamental linear mechanism, commonly referred to as the lift-up effect, which is the main effect causing instability in 3D shear flow. Note that this effect is not present in 2D Navier-Stokes equations due to the vanishing of the term $u_{0}^{2}$ in $u_{0}^{1}$ equation by incompressible condition.

To the best of our knowledge, there are currently no stability results available for the system (NSC) near the Couette flow. However, significant progress has been made on the stability threshold problem for 3D classical Navier-Stokes equations near the Couette flow in a series of works by Bedrossian, Germain and Masmoudi \cite{MR3612004, MR4126259, BGM2022}. Specifically, in their work \cite{MR3612004}, they studied the stability threshold for 3D Navier-Stokes equations and demonstrated that if the initial perturbation satisfies $\left\| u_0 \right\|_{H^{\sigma}} \leqslant \delta \nu^{\frac{3}{2}}$ for any $\sigma>\frac{9}{2}$, then the solution is globally well-posed and remains close to the Couette flow within $O(\nu^{\frac{1}{2}})$ in $L^2$ norm for all time. Later on, Wei and Zhang \cite{MR4373161} showed that the transition threshold $\gamma=1$ of 3D Navier-Stokes equations in Sobolev space $H^2$ is optimal. They further provided a mathematically rigorous proof that the regularity of initial perturbations does not affect this transition threshold. The stability threshold has also been investigated in other fluid dynamics equations such as MHD equation \cite{MR4115008}, Boussinesq equation \cite{Zelati2023, ZZW2024}, Patlak-Keller-Segel equation \cite{CWW2024}. For 3D classical Navier-Stokes equations in finite channels with different geometries like Couette flow or plane Poiseuille flow, we refer to papers such as \cite {CWZ2024} and\cite {CDLZ2024} respectively. 

Due to applications involving atmospheric or oceanic flows among other geophysically complex fluids controlled by (NSC), see reference \cite{GSR2007} for more information. When considering wind or ocean currents specifically, it is necessary to account for Coriolis force due to rotation of reference frame. This provides a natural physical basis for studying nonlinear dynamic stability of (NSC) near Couette flow.

\textit{1.3. Structural differences between (NSC) and Navier-Stokes equations.} 
To begin with, we study the linear effect of (NSC) near the Couette flow. 
We neglect the nonlinear contributions in \eqref{1.3},  then the linearized perturbation equation satisfies
\begin{equation}\label{HSX1.6}
\partial_{t} u+ y \partial_{x} u-\nu \Delta u+\begin{pmatrix}
(1-\beta)+(\beta-2)\partial_{x} \Delta^{-1}\partial_{x} \\
(\beta-2)\partial_{y} \Delta^{-1}\partial_{x} \\
(\beta-2)\partial_{z} \Delta^{-1}\partial_{x}
\end{pmatrix}u^2-\beta\begin{pmatrix}
 \partial_{x} \Delta^{-1} \partial_{y} \\
-1+ \partial_{y} \Delta^{-1} \partial_{y}\\
\partial_{z} \Delta^{-1} \partial_{y}
\end{pmatrix}u^1=0.
\end{equation}
\eqref{HSX1.6} is structurally distinct from the classical Navier-Stokes equations.  Indeed, the linearized perturbation of the 3D Navier-Stokes equations near the Couette flow is written as
\begin{align}\label{1..7}
&\partial_{t} u+ y \partial_{x} u -\nu \Delta u+\begin{pmatrix}
1-2 \partial_{x} \Delta^{-1} \partial_{x}  \\
-2 \partial_{y} \Delta^{-1} \partial_{x}  \\
-2 \partial_{z} \Delta^{-1} \partial_{x} 
\end{pmatrix}u^{2}=0,
\end{align} 
where the  $u^{2}$ equation is decoupled from the velocity fields $u^{1}$ and $u^{3}$. This allows us to consider the $u^2$ equation separately when we  study the stability of \eqref{1..7} by energy method. 

The disparities in linear effects between \eqref{HSX1.6} and \eqref{1..7} are summarized in Table \ref{tab1}. The stability issue of various dynamic equations near Couette flow is closely associated with four linear phenomena, namely lift-up, dispersion, inviscid damping, and enhanced dissipation. Among these phenomena, the impacts of dispersion, inviscid damping, and enhanced dissipation exhibit favorable characteristics that contribute to the decay property of solutions.
\begin{table}[H]\label{tab1}
	\centering
	\caption{Four linear effects of Navier-Stokes equations and (NSC)}
	\begin{tabular}{c|c|c}\hline
		{Linear effects} & Navier-Stokes & (NSC)  \\  \hline
		Lift-up  & $u_0^1$  & None \\ 
		Dispersion  & None  & $u_0^1$, $u_0^2$, $u_0^3$ \\ 
		Inviscid damping  & $u_{\neq}^2$  & $u_{\neq}^2$ \\ 
		Enhanced dissipation  & $u_{\neq}^1$, $u_{\neq}^2$, $u_{\neq}^3$  & $u_{\neq}^1$, $u_{\neq}^2$, $u_{\neq}^3$ \\ \hline
	\end{tabular}
\end{table}

It is worth noting that the lift-up effect of 3D classical Navier-Stokes equations arises from the linear term $u_0^2$ in the equation for $u_0^1$. However, our current focus lies on investigating the impact of rotation, which corresponds to the Coriolis force $\beta e_3 \times V$ in \eqref{1.1} on the stability of the Navier-Stokes equations near Couette flow. The most significant observation is the emergence of a novel linear term, $\nabla (-\Delta)^{-1} \partial_{y} u^1$, within the linear perturbation equations \eqref{HSX1.6}. This leads to a strongly coupled structure between $u^1$ and $u^2$. Unlike  \eqref{1..7}, rotation converts the lift-up effect exerted on simple-zero frequency velocity along $u^1$ direction, occurring over a short time scale of order $\nu^{-1}$, into a dispersive mechanism characterized by oscillations, as observed in subsection \ref{section3.3}. This represents another distinction between (NSC) and classical Navier-Stokes equations regarding their linear effects. 

It should be emphasized that rotation acts here to counterbalance the lift-up effect and induce dispersion, as discussed in subsections  \ref{section3.2} and \ref{section3.3} respectively. Simultaneously, rotation does not affect enhanced dissipation but exhibits slight differences in inviscid damping originally caused by Couette flow.

\textit{1.4. Re-formulation of the equations.}
To better understand  the  perturbation equations \eqref{1.3}, we first introduce two quantities $q^{2}\triangleq\Delta u^{2}$ and   $w^{2} \triangleq \partial_{z} u^{1}-\partial_{x} u^{3}$ which is observed in many physical literatures such as \cite{Kelvin1887, MR1886008, CWW2024, MR1801992, MR4126259}. Then these  unknowns satisfy the system:
\begin{align}\label{2.4}
\begin{cases}
\partial_{t} q^{2}+ y \partial_{x} q^{2}-\nu \Delta q^{2}+\beta \partial_{z} w^{2}\\
\quad   =-u \cdot \nabla q^{2}-q \cdot \nabla u^{2}-2 \partial_{i} u^{j} \partial_{i j} u^{2}+\partial_{y}\left(\partial_{i} u^{j} \partial_{j} u^{i}\right), \\
\partial_{t} w^2+  y \partial_{x} w^{2}-\nu \Delta w^2+\left( 1-\beta \right)\partial_{z} \Delta^{-1} q^2  \\
\quad = -\partial_{z} \left( u \cdot \nabla u^1 \right) +\partial_{x} \left( u \cdot \nabla u^3 \right) ,\\
q^2(t=0)=q_{\text {in }}^2, \quad w^2(t=0)=w_{\text {in }}^2.
\end{cases}
\end{align}

In the following, we will introduce a coordinate transform  described in detail in \cite{MR4126259, MR3612004, Zelati2023},  which  is a central tool for our analysis. Due to the characteristics of the transport structure in (\ref{2.4}), we define a linear change of variable 
\begin{equation}\label{2.5}
X=x- t y, \quad \quad Y=y, \quad \quad Z=z.
\end{equation}
We will use the convention of  capital letters when considering any function $f$ in the moving frame (\ref{2.5}), hence defining $F$ by $F(t,X,Y,Z)=f(t,x,y,z)$. In the new coordinates, the corresponding differential operators are changed as follows 
\begin{equation}\label{2.6}
\nabla f(t, x, y, z) =\left(\begin{array}{c}
\partial_{x} f \\
\partial_{y} f \\
\partial_{z} f
\end{array}\right)=\left(\begin{array}{c}
\partial_{X} F \\
\left(\partial_{Y}- t \partial_{X}\right) F \\
\partial_{Z} F
\end{array}\right) \\
=\left(\begin{array}{c}
\partial_{X} F \\
\partial_{Y}^{L} F \\
\partial_{Z} F
\end{array}\right)=\nabla_{L} F(t, X, Y, Z) .
\end{equation}
Using the notation
\begin{equation*}
\Delta_{L}=\nabla_{L} \cdot \nabla_{L}= \partial_{X}^2+(\partial_{Y}^{L})^2+\partial_{Z}^2.
\end{equation*}
The Laplacian transforms as
\begin{equation}\label{2.7}
\Delta f=\Delta_{L}F=\left(\partial_{X}^2+(\partial_{Y}^{L})^2+\partial_{Z}^2 \right) F.
\end{equation}
Combining the definition of the new coordinates as in (\ref{2.5}) and denoting $U^i(t, X, Y, Z)=u^i(t, x, y, z)$ $(i=1, 2, 3)$, $  Q^2(t, X, Y, Z)=q^2(t, x, y, z)$ and $W^2(t, X, Y, Z)=w^2(t, x, y, z)$.  \eqref{2.4} then can be  rewritten as follows
\begin{align}\label{2.10}
\begin{cases}
\partial_{t} Q^{2}-\nu \Delta_{L} Q^{2}+\beta \partial_{Z} W^2\\
\quad  =\partial_{Y}^{L}\left(\partial_{i}^{L} U^{j} \partial_{j}^{L} U^{i}\right)-Q \cdot \nabla_{L} U^{2}-U \cdot \nabla_{L} Q^{2}-2 \partial_{i}^{L} U^{j} \partial_{i j}^{L} U^{2}, \\
\partial_{t} W^{2}-\nu \Delta_{L} W^{2}+\left( 1-\beta \right) \partial_{Z} \Delta_{L}^{-1} Q^2 \\
\quad =-\partial_{Z} \left( U \cdot \nabla_{L} U^1\right)+\partial_{X} \left( U \cdot \nabla_{L} U^3\right),\\
Q^{2}(t=0)=Q_{\text{in}}^{2},\quad  \quad W^2(t=0)=W_{\text {in }}^2.
\end{cases}
\end{align}
Although most work is done directly on the system (\ref{2.10}), for certain steps it will be necessary to use the momentum form of the equations
\begin{align}\label{2.11}
\begin{cases}
\partial_{t} U-\nu \Delta_{L} U+\begin{pmatrix}
\left(1-\beta \right) U^2 \\
\beta U^1 \\
0\end{pmatrix}+\left( \beta-2 \right) \nabla_{L} \Delta_{L}^{-1} \partial_{X} U^2-\beta \nabla_{L} \Delta_{L}^{-1} \partial_{Y}^{L} U^1 \\
\quad=-U \cdot \nabla_{L} U+\nabla_{L} \Delta_{L}^{-1}(\partial_{i}^{L} U_j \partial_{j}^{L} U_i),\\
U(t=0)=U_{\text{in}}.
\end{cases}
\end{align}
It is worth mentioning here that under the coordinate transformation \eqref{2.5}, the initial value conditions in \eqref{2.10}  and  \eqref{2.11} are essentially the same as the initial values in  \eqref{2.4} and \eqref{1.3}, respectively. That is, $U_{\text{in}}=u_{\text{in}},$ $ Q_{\text{in}}^{2}=q_{\text{in}}^{2}$ and $W_{\text{in}}^{2}=w_{\text{in}}^{2}$. In the below section, we will not make a special distinction to explain.

\textit{1.5. Summary of main results.} To state our result, we define the projections on the zero frequency and the nonzero frequencies in $x$ of a function $f$  as follows:
\begin{equation}\label{HSX333}
P_{k=0}f=f_{0}=\int_{\mathbb{T}} f(x,y,z)dx, \quad	P_{\neq}f=f_{\neq}=f-f_{0}. 
\end{equation}
Similar to the definition of the zero frequency in $x$, it will also be convenient to projection onto the zero frequency in  $z$. For this, we use the following notation
$$P_{l= 0}f= \overline{f}=\int_{\mathbb{T}}f(x,y,z)dz, \quad \quad \widetilde{f}=f-\overline{f}.$$ 
Furthermore, the projection onto the zero frequency in $x$ and $z$ of a function $f(x,y,z)$ is denoted by
\begin{equation*}
	\overline{f}_0=f_{0, 0}=\int_{\mathbb{T}} \int_{\mathbb{T}} f(x,y,z)dx dz, \quad \quad \widetilde{f}_0=f_0-\overline{f}_0,
\end{equation*}
where $\overline{f}_{0}$ represents the double zero frequency and $\widetilde{f}_{0}$ stands for the simple zero frequency of $f$.  

Our first main result gives the linear stability of (NSC) near the Couette flow as follows.  
\begin{thm}[\textbf{The linear stability}]\label{1.1.}
	Assume   that $\nu>0$ and  $\beta \in \mathbb{R}$ with $|\beta| \geqslant 2$. Let the initial data $u_{\mathrm{in}}$ be a divergence-free, smooth vector field, then the solution $u=u_{\neq}+u_0$ to the linearized  equation \eqref{HSX1.6}  satisfies the following  linear stability estimates:
	
	$(1)$  The enhanced dissipation  and inviscid damping of the non-zero   frequency $U_{\neq}$:
	\begin{align}
	\left\| (U^1_{\neq}, U^3_{\neq}) (t) \right\|_{H^{s}} \lesssim & \, e^{-\frac{1}{24} \nu t^3} \left(\left\|  U^2_{\mathrm{in}}\right\|_{H^{s+2}} + \left\|  W^2_{\mathrm{in}}\right\|_{H^{s+1}} \right) , \label{1.12} \\
	\left\|  U^2_{\neq} (t) \right\|_{H^{s}} \lesssim & {\left\langle {t} \right\rangle}^{-1} e^{-\frac{1}{24} \nu t^3} \left(\left\|  U^2_{\mathrm{in}}\right\|_{H^{s+3}} + \left\|  W^2_{\mathrm{in}}\right\|_{H^{s+2}} \right)  \label{1.11}
	\end{align}
	for any $s\geqslant 0$. 
	
	$(2)$ Cancellation of lift-up effect on the zero  frequency $u_{0}=\overline{u}_0+\widetilde{u}_0$ by rotation. For $k=0$,  if $l \neq 0$, considering the differential operators 
	\begin{align}\label{3.1.4}
	\mathcal{L}_{+} \triangleq \nu \Delta_{y, z}+i \sqrt{\beta (\beta-1)} |\partial_{z}||\nabla_{y, z}|^{-1}, \quad \mathcal{L}_{-} \triangleq \nu \Delta_{y, z}-i \sqrt{\beta (\beta-1)} |\partial_{z}||\nabla_{y, z}|^{-1},
	\end{align}
	then the simple zero frequency $\widetilde{u}_{0}$  can be written as
	\begin{equation}\label{1.13}
	\begin{cases}
	\widetilde{u}_{0}^{1}(t, y, z)=\frac{1}{2}\left(e^{\mathcal{L}_{+} t}+e^{\mathcal{L}_{-} t}\right) \widetilde{u}_{0 \mathrm{in}}^{1}+\frac{i}{2} \sqrt{\frac{\beta-1}{\beta}} |\nabla_{y, z}| |\partial_{z}|^{-1}\left(e^{\mathcal{L}_{-} t} -e^{\mathcal{L}_{+} t} \right) \widetilde{u}_{0 \mathrm{in}}^{2}, \\
	\widetilde{u}_{0}^{2}(t, y, z)=\frac{1}{2}\left(e^{\mathcal{L}_{+} t}+e^{\mathcal{L}_{-} t}\right)  \widetilde{u}_{0 \mathrm{in}}^{2}+\frac{i}{2}\sqrt{\frac{\beta}{\beta-1}} |\nabla_{y, z}|^{-1} |\partial_{z}|  \left(e^{\mathcal{L}_{+} t}-e^{\mathcal{L}_{-} t}\right) \widetilde{u}_{0 \mathrm{in}}^{1}, \\
	\widetilde{u}_{0}^{3}(t, y, z)=-\partial_{z}^{-1} \partial_{y} \widetilde{u}_{0}^{2}(t, y, z).
	\end{cases}
	\end{equation}
	If  $l=0$, then \eqref{HSX1.6} reduces to two decoupled heat equations and the double zero frequency $\overline{u}_0$ satisfies
	\begin{equation}\label{3.1.5}
	\overline{u}_0^1(t, y)=e^{\nu \partial_{yy} t} \overline{u}_{0 \mathrm{in}}^1, \quad \quad
	\overline{u}_0^2(t, y)=0,  \quad \quad
	\overline{u}_0^3(t, y)=e^{\nu \partial_{yy} t} \overline{u}_{0 \mathrm{in}}^3.
	\end{equation}
	
   $(3)$ The dispersive estimates on the simple zero frequency $\widetilde{u}_0$  satisfy
	\begin{align}
		\left\| \widetilde{u}_{0}^{1}  \right\|_{L^{\infty}(\mathbb{R} \times \mathbb{T})} \lesssim & e^{-\nu t} \left(  \sqrt{\beta (\beta-1)} \, t\right)^{-\frac{1}{3}} \left( \left\|  \widetilde{u}_{0 \mathrm{in}}^{1}  \right\|_{W^{4, 1}(\mathbb{R} \times \mathbb{T})}+  \left\|  \widetilde{u}_{0 \mathrm{in}}^{2}  \right\|_{W^{5, 1}(\mathbb{R} \times \mathbb{T})}   \right), \label{3.6}\\
		\left\| \widetilde{u}_{0}^{2}  \right\|_{L^{\infty}(\mathbb{R} \times \mathbb{T})} \lesssim & e^{-\nu t} \left(  \sqrt{\beta (\beta-1)} \, t\right)^{-\frac{1}{3}}  \left\| \left( \widetilde{u}_{0 \mathrm{in}}^{1}, \widetilde{u}_{0 \mathrm{in}}^{2}  \right) \right\|_{W^{4, 1}(\mathbb{R} \times \mathbb{T})}, \label{3.6.1}\\
		\left\| \widetilde{u}_{0}^{3}  \right\|_{L^{\infty}(\mathbb{R} \times \mathbb{T})} \lesssim  &e^{-\nu t} \left(  \sqrt{\beta (\beta-1)} \, t\right)^{-\frac{1}{3}}  \left\| \left( \widetilde{u}_{0 \mathrm{in}}^{1}, \widetilde{u}_{0 \mathrm{in}}^{2}  \right) \right\|_{W^{5, 1}(\mathbb{R} \times \mathbb{T})}. \label{3.6.2}
	\end{align}
\end{thm}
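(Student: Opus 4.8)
The plan is to reduce everything to the zero-mode ($k=0$) and non-zero-mode ($k\neq0$) blocks of the linearized system \eqref{HSX1.6} separately, working in the moving frame \eqref{2.5}, and then to solve each block essentially explicitly on the Fourier side. For the non-zero-mode part (items (1)) I would first pass to the scalar quantities $Q^2=\Delta_L U^2$ and $W^2=\partial_Z U^1-\partial_X U^3$, whose linear evolution is the linear part of \eqref{2.10}; since $k\neq0$ the operator $\partial_Z\Delta_L^{-1}$ and $\partial_X\Delta_L^{-1}$ are bounded, so the coupled $(Q^2,W^2)$ system is, after a Fourier transform in $(X,Y,Z)$, a $2\times2$ ODE system in time with the diffusion multiplier $\nu(k^2+(\eta-kt)^2+l^2)$ factored out. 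The key computation is the time integral $\int_0^t (k^2+(\eta-ks)^2+l^2)\,ds \geq \tfrac{1}{12}k^2 t^3$ (the standard enhanced-dissipation lower bound), which yields the $e^{-\nu t^3/24}$ factor once one checks the off-diagonal coupling terms (carrying the bounded symbols $\beta l/(k^2+(\eta-kt)^2+l^2)$, etc.) are uniformly controlled; the extra $\langle t\rangle^{-1}$ gain for $U^2_{\neq}$ in \eqref{1.11} comes from inverting $\Delta_L$ to recover $U^2$ from $Q^2$, i.e. from the bound $(k^2+(\eta-kt)^2+l^2)^{-1}\lesssim \langle t\rangle^{-2}$ on frequencies with $k\neq0$, combined with the derivative loss bookkeeping that accounts for the $H^{s+2}$, $H^{s+3}$ on the right.

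For the zero mode (items (2) and (3)), at $k=0$ the transport term drops and \eqml{HSX1.6} decouples $u_0^2$ from $(u_0^1,u_0^3)$ only partially: the Coriolis terms $(1-\beta)u_0^2$ in the $u_0^1$ equation and $\beta u_0^1 - \beta\partial_y\Delta_{y,z}^{-1}(\partial_y u_0^1)=\beta\partial_z\Delta_{y,z}^{-1}(\partial_z u_0^1)$-type contributions in the $u_0^2$ equation give a closed $2\times2$ system for $(\widetilde u_0^1,\widetilde u_0^2)$ after using incompressibility $\partial_y\widetilde u_0^2+\partial_z\widetilde u_0^3=0$ to eliminate $\widetilde u_0^3$; this is where the operators $\mathcal{L}_\pm$ in \eqref{3.1.4} appear as the eigenvalues. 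I would diagonalize this system explicitly on the Fourier side in $(y,z)$: the eigenvalues are $\nu\Delta_{y,z}\pm i\sqrt{\beta(\beta-1)}|\partial_z||\nabla_{y,z}|^{-1}$, and matching initial data gives the representation \eqref{1.13}; the case $l=0$ is immediate since then the oscillatory part vanishes and one is left with \eqref{3.1.5}. Note the hypothesis $|\beta|\geq2$ guarantees $\beta(\beta-1)>0$ so the square roots are real and the two semigroups are genuinely oscillatory (dispersive) rather than growing.

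Finally, for the dispersive $L^\infty$ bounds \eqref{3.6}--\eqref{3.6.2}, I would take the explicit formula \eqref{1.13}, pull out the harmless $e^{-\nu t}$ from the dissipative real part $\nu\Delta_{y,z}$ (bounded above by $e^{\nu\Delta_{y,z}t}$ in any $L^p$ by the heat semigroup and absorbing a further $e^{-\nu t}$ after separating the lowest mode — or simply noting $\mathrm{Re}\,\mathcal{L}_\pm\leq -\nu$ on the relevant frequency support after the projection used, which I should state carefully), and then estimate the oscillatory factor $e^{\pm i t\sqrt{\beta(\beta-1)}|\partial_z||\nabla_{y,z}|^{-1}}$ as a Fourier multiplier on $\mathbb{R}_y\times\mathbb{T}_z$. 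For each fixed vertical mode $l\neq0$ the phase is $\phi_l(\eta)=t\sqrt{\beta(\beta-1)}\,|l|(l^2+\eta^2)^{-1/2}$ in the $\eta$ variable, and $\phi_l''(\eta)$ vanishes only at isolated points with $|\phi_l'''|\sim t\sqrt{\beta(\beta-1)}|l|^{-2}$ nearby, so van der Corput's lemma gives the one-dimensional oscillatory-integral decay $(\sqrt{\beta(\beta-1)}\,t)^{-1/3}$ uniformly in $l$; summing over $l$ and using Bernstein/Hausdorff-Young to pass from $W^{N,1}$ data to $L^\infty$ output (the loss of $N=4$ or $5$ derivatives absorbing the $|l|$ and $|\nabla_{y,z}|^{\pm1}$ weights appearing as symbols in \eqref{1.13}) yields the claimed estimates. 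I expect the main obstacle to be the stationary-phase analysis: one must verify the phase $\phi_l$ has only finite-type (order $\leq3$) degenerate critical points with the stated uniform lower bound on the third derivative, so that van der Corput applies with a constant independent of $l$, and then track exactly how many derivatives of the data are consumed by the accompanying symbols to land on $W^{4,1}$ and $W^{5,1}$ respectively.
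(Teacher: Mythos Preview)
Your outline for parts (2) and (3) is essentially the paper's argument: diagonalize the $2\times2$ system for $(\widetilde u_0^1,\widetilde u_0^2)$ on the Fourier side to obtain \eqref{1.13}--\eqref{3.1.5}, and then apply van der Corput (order $3$, as you correctly identify) to the phase $\sqrt{\beta(\beta-1)}\,|l|(\eta^2+l^2)^{-1/2}$ mode-by-mode in $l$ to get the $t^{-1/3}$ decay.

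The gap is in part (1). The linearized $(Q^2,W^2)$ system from \eqref{2.10} reads, on the Fourier side,
\[
\partial_t\widehat{Q^2_{\neq}}+\nu p\,\widehat{Q^2_{\neq}}+i\beta l\,\widehat{W^2_{\neq}}=0,
\qquad
\partial_t\widehat{W^2_{\neq}}+\nu p\,\widehat{W^2_{\neq}}-i(1-\beta)l\,p^{-1}\widehat{Q^2_{\neq}}=0,
\]
so the off-diagonal entries are $i\beta l$ and $-i(1-\beta)l\,p^{-1}$, \emph{not} both of the form $\beta l/p$ as you write. Only the second is bounded; the first is a bare $i\beta l$. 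Consequently the naive energy $|\widehat{Q^2}|^2+|\widehat{W^2}|^2$ is not conserved modulo dissipation, and your sentence ``once one checks the off-diagonal coupling terms \dots\ are uniformly controlled'' hides the real difficulty. The paper's fix is to symmetrize with the time-dependent rescaling $K^2=\tfrac{i\sqrt\beta}{\sqrt{\beta-1}}\,p^{1/2}\widehat{W^2}$: in the variables $(\widehat{Q^2},K^2)$ the coupling becomes skew-symmetric, but differentiating the weight $p^{1/2}$ produces the stretching term $-\tfrac12\tfrac{\dot p}{p}\,K^2$ in \eqref{3.7}. Since $\tfrac{\dot p}{p}=\tfrac{-2k(\eta-kt)}{p}$ can be large and of the wrong sign for $t>\eta/k$, this stretching is not absorbed by $\nu p$ until $t-\eta/k\gg\nu^{-1/3}$; the paper introduces the multiplier $m$ of \eqref{4...4} precisely to eat this transient growth (see \eqref{HSX0000}--\eqref{3.28}). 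Without $K^2$ and $m$ you do not get the clean bound \eqref{3.24}, and then \eqref{3.25}--\eqref{3.26} (hence \eqref{1.12}--\eqref{1.11}) do not follow. A smaller point: your pointwise claim $p^{-1}\lesssim\langle t\rangle^{-2}$ is false at $\eta=kt$; the correct statement, which the paper uses, is $p^{-1}\lesssim\langle t\rangle^{-2}|k,\eta,l|^2$, and this is exactly the derivative loss you allude to.
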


\begin{rem}
In Theorem \ref{1.1.}, the implication of \eqref{1.11} is that the algebraic decay rate ${\left\langle{t} \right\rangle}^{-1}$ of inviscid damping for $U_{\neq}^2$ is slower than the rate $  {\left\langle {t} \right\rangle}^{-2}$ observed in the classical Navier-Stokes equations. This discrepancy arises from the introduction of the new variable $K^2$ and Fourier multiplier $m$, which are employed to handle the stretching term in \eqref{3.7}$_{2}$, as evident from their uniform boundedness and also supported by \eqref{3..100} in section \ref{section3}.
\end{rem}
\begin{rem}	 
	 The explicit expressions of the zero frequency velocity field are provided by \eqref{1.13} and \eqref{3.1.5}. This observation suggests that the rotational effect hinders the lift-up effect that is a crucial factor in the instability near the Couttee flow.
\end{rem}
\begin{rem}
	In comparison to the classical Navier-Stokes equations, rotation induces a dispersion effect on $\widetilde{u}_0$, leading to additional algebraic decay estimates in time. It should be noted that although we present dispersion estimates for simple zero frequencies, they do not affect our investigation into nonlinear stability as these estimates do not impact interactions between non-zero frequencies.
\end{rem}

Our second main result concerns the nonlinear stability of (NSC) near the Couette flow in Sobolev space as follows.
\begin{thm}[\textbf{The nonlinear stability threshold}]\label{1..1}
	 Let $\nu \in (0,1)$ and $\beta \in \mathbb{R}$ with $|\beta| \geqslant 2$. For all  $\sigma > \frac{9}{2}$, there exists  $\delta=\delta(\sigma)$  such that if the initial data $u_{\mathrm{in}}$  is divergence-free with
	\begin{align}\label{1..11-2}
    \left\|u_{\mathrm{in}}\right\|_{H^{\sigma}}=\varepsilon<\delta \nu, 
	\end{align}
	then there exists a unique global solution $u$ to \eqref{1.3}  and satisfying the following estimates:
	\begin{align}
	&\left\| u_0^{1, 3} \right\|_{L^{\infty} H^{\sigma-1}}+ \nu^{\frac{1}{2}} \left\| \nabla u_0^{1, 3} \right\|_{L^2 H^{\sigma-1}}\lesssim  \varepsilon,   \label{1...9} \\
	&\left\| u_0^{2} \right\|_{L^{\infty} H^{\sigma}}+ \nu^{\frac{1}{2}} \left\| \nabla u_0^{2} \right\|_{L^2 H^{\sigma}}\lesssim  \varepsilon,   \label{1...10}\\
	&\left\| U_{\neq}^{1, 2} \right\|_{L^{\infty} H^{\sigma-2}}+ \nu^{\frac{1}{2}}\left\| \nabla_{L} U_{\neq}^{1, 2} \right\|_{L^2 H^{\sigma-2}}\lesssim  \varepsilon,   \label{1...11} \\
	&\left\| U_{\neq}^3 \right\|_{L^{\infty} H^{\sigma-2}}+ \nu^{\frac{1}{2}}  \left\| \nabla_{L} U_{\neq}^3 \right\|_{L^2 H^{\sigma-2}}\lesssim   \varepsilon.   \label{1...12}
	\end{align}
\end{thm}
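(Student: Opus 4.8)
\textbf{Proof strategy for Theorem \ref{1..1}.} The plan is to run a bootstrap (continuity) argument on the system \eqref{2.10}--\eqref{2.11} in the moving frame. I would fix $\sigma > 9/2$ and introduce a hypothesis set: suppose that on a maximal time interval $[0,T^*]$ the bounds \eqref{1...9}--\eqref{1...12} hold with the implied constant $\lesssim$ replaced by a fixed large constant $C_0$ (say $8$ in place of the eventual $\lesssim$); the goal is to improve every such bound to $C_0/2$, which by local well-posedness and continuity forces $T^* = +\infty$. The energy functionals I would control are: (i) for the zero modes $u_0^{1,3}$ and $u_0^{2}$, weighted-in-$\nu$ Sobolev energies at level $H^{\sigma-1}$ and $H^{\sigma}$ respectively, exploiting that these solve forced heat-type equations (the linear part of \eqref{1.13} and \eqref{3.1.5} shows the zero modes only dissipate, with the lift-up term killed by rotation); (ii) for the nonzero modes, the Bedrossian--Germain--Masmoudi-type energies built from $Q_{\neq}^2$ and $W_{\neq}^2$ at level $H^{\sigma-2}$, incorporating the Fourier multiplier $m$ and the auxiliary variable $K^2$ flagged in the Remark, designed so that $\tfrac{d}{dt}$ of the energy produces the enhanced-dissipation gain $\nu t^2$ (equivalently $e^{-\nu t^3/24}$ via \eqref{1.12}--\eqref{1.11}) together with an inviscid-damping gain from the $\partial_Y^L \Delta_L^{-1}$ structure.

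The core of the argument is the nonlinear estimates: feed the bootstrap bounds into the quadratic right-hand sides of \eqref{2.10}, split every product by Littlewood--Paley / frequency projection into zero--zero, zero--nonzero, and nonzero--nonzero interactions, and show each contribution is bounded by $\varepsilon^2/\nu$ times the relevant dissipation norm, so that after time integration and using $\varepsilon < \delta\nu$ one gains a factor $\varepsilon \cdot \delta < C_0/2$ over the linear-propagated data term $\lesssim \varepsilon$. The mechanisms that make this close at threshold $\gamma = 1$ (rather than $3/2$) are: the absence of any lift-up growth in $u_0^1$ (so $\|u_0^{1,3}\|_{H^{\sigma-1}}$ stays $O(\varepsilon)$ instead of growing like $O(\varepsilon/\nu)$), which removes the most dangerous zero-frequency-to-nonzero-frequency forcing; and the enhanced dissipation $e^{-\nu t^3/24}$ on all nonzero modes, which provides the $\int_0^\infty$ integrability needed to absorb the transport nonlinearity $U\cdot\nabla_L Q^2$. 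I would handle the reaction terms $Q\cdot\nabla_L U^2$ and the stretching term $2\partial_i^L U^j \partial_{ij}^L U^2$ using the extra derivative room ($\sigma - 2$ for nonzero modes against $\sigma$ for $u_0^2$) and the $\langle t\rangle^{-1}$ inviscid damping of $U_\neq^2$ from \eqref{1.11}; the $\sigma > 9/2$ threshold is exactly what is needed so that the worst commutator between the multiplier $m$ and the transport term, combined with a $3/2^+$ loss from the 3D algebra/product estimates, is still summable.

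For the zero modes I would close as follows: $u_0^2$ satisfies a forced heat equation with forcing quadratic in $(u_0, U_{\neq})$; the dangerous terms are those quadratic in $U_{\neq}$, but these carry $e^{-\nu t^3/24}$ and integrate to a factor $\nu^{-1/2}$ (from $\int e^{-\nu t^3}dt \sim \nu^{-1/3}$ actually, which is even better), beaten by $\varepsilon^2 < \delta^2\nu^2$. Then $u_0^{1,3}$ is forced by $u_0^2$ and by nonzero-mode products; since $\|u_0^2\|$ is already $O(\varepsilon)$ and there is no amplifying $1/\nu$, the bound \eqref{1...9} closes directly. The main obstacle I anticipate is the nonzero-mode stretching/reaction estimates at the $\langle t\rangle^{-1}$ rate: because the inviscid damping is one power weaker than in the classical case (as the Remark notes), the margin for absorbing $\partial_{ij}^L U^2$ — which can cost up to two powers of $t$ through $\partial_Y^L = \partial_Y - t\partial_X$ — into the dissipation is tight, and one must use the multiplier $m$ and the reformulated variable $K^2$ carefully so that the genuinely growing-in-$t$ pieces are always paired with either a $\partial_X$ (gaining back a power via nonzero frequency) or the full $\nu t^2$ enhanced-dissipation weight. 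Verifying that every term in the high-frequency energy identity lands on the good side of this bookkeeping is where the bulk of the technical work lies.
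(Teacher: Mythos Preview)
Your overall bootstrap strategy, choice of unknowns $(Q^2,K^2)$, use of the multiplier $m$, and identification of the mechanism (rotation kills lift-up, so $u_0^{1,3}$ stays $O(\varepsilon)$ rather than $O(\varepsilon/\nu)$) match the paper's approach. However, two specific technical ingredients are missing from your plan, and without them the argument does not close at threshold $\gamma=1$.

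First, you omit the ghost multiplier $M$ (see \eqref{4.5} and Lemma~\ref{lem4.2}). In the nonlinear bootstrap the nonzero modes do \emph{not} carry a pointwise $e^{-\nu t^3/24}$ factor; what you actually control is $\|m M Q_{\neq}^2\|_{L^\infty H^N}$ and $\nu^{1/2}\|m M \nabla_L Q_{\neq}^2\|_{L^2 H^N}$. To estimate the trilinear terms you repeatedly need $\|m M Q_{\neq}^2\|_{L^2 H^N}$ (no $\nabla_L$), and the $M$-multiplier is precisely what gives this with only a $\nu^{-1/6}$ loss via Corollary~\ref{cor4.1}. If you try instead to use the dissipation bound alone (loss $\nu^{-1/2}$) or your proposed $\int e^{-\nu t^3}dt \sim \nu^{-1/3}$ heuristic, terms such as those in $\mathcal{NLP}Q_0^2(\neq,\neq)$ or $\mathcal{T}Q_0^2(\neq,\neq)$ in Section~\ref{4.3.2}, which already carry a factor $\sup m^{-1}\sim\nu^{-1/3}$ or $\sup m^{-2}\sim\nu^{-2/3}$, overshoot $\varepsilon^3\nu^{-1}$ and fail to close.

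Second, your treatment of the zero modes is too optimistic. The simple-zero system for $(\widetilde u_0^1,\widetilde u_0^2)$ is genuinely coupled: the $\widetilde u_0^1$-equation contains the linear term $(1-\beta)\widetilde u_0^2$ with no smallness, so a direct energy estimate on $\widetilde u_0^1$ alone cannot close. The paper handles this either by running the energy on the symmetrized pair $(Q_0^2,\check K_0^2)$ (where the cross terms are antisymmetric and cancel, Section~\ref{4.3.2}) and then reading off $\widetilde u_0^{2,3}$, or for $\widetilde u_0^1$ by Duhamel against the explicit oscillator semigroup \eqref{3.45} (see \eqref{7.9}--\eqref{7.12}). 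Your claim that ``$u_0^{1,3}$ is forced by $u_0^2$\dots\ the bound closes directly'' misses this step. Finally, a minor point: the paper does not use commutator estimates for $m$; $\sigma>9/2$ (i.e.\ $N>5/2$) enters because the algebra property is needed at level $H^{N-1}$ in the heat-semigroup step \eqref{7.12}, not through a multiplier commutator.
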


\begin{rem}
	Compared with the nonlinear stability threshold of 3D classical Navier-Stokes equations, such as Bedrossian, Germain and Masmoudi \cite{MR3612004} with $\gamma=\frac{3}{2}$,  Wei and Zhang \cite{MR4373161} with $\gamma=1$, we establish a stability threshold of $\gamma=1$ in Theorem \ref{1..1}. This demonstrates that rotation contributes to the stability of Couette flow. However, given the presence of rotation, it is necessary to investigate further mechanisms for reducing the threshold below 1, which will be explored in future work.
\end{rem}

\begin{rem}
  The global existence of solutions to (NSC) with large initial data is guaranteed when the Coriolis parameter $\beta$ reaches a sufficiently large value. However, Theorem \ref{1.1.} and \ref{1..1} only require $|\beta|\geqslant 2$. In fact, this requirement can be relaxed in a quantified manner by considering $\beta$ to be strictly greater than 1 or less than 0. This choice of range for $\beta$ ensures that the zero frequency velocity field does not exhibit any lift-up effect, which is crucial for improving stability thresholds.
\end{rem}

\textit{1.6. Brief comment of key ideas.}
To give the rough ideas and techniques in proving Theorems \ref{1.1.} and \ref{1..1}, we review the dynamics it captures here. We refer to the sections \ref{section3} and \ref{section4} for the details by selecting the unknowns, setting up our main bootstrap argument and more elaborate overview of proof. The proper choice of unknowns also includes translation to a moving frame. For the sake of simplicity, we ignore this more delicate point in this preliminary presentation.

\textit{1.6.1. Linearized dynamics.} Due to the rotational effect and incompressible condition, $u_0^3$ is compelled by $u_0^2$, while $u_0^1$ and $u_0^2$ are coupled in an oscillatory manner. It can be observed that rotation counteracts the classical lift-up instability mechanism present in the Navier-Stokes equations near Couette flow. Additionally, dispersion caused by rotation occurs perpendicular to the rotation vector, resulting in a time decay of associated waves in the $L^\infty$-norm. This phenomenon manifests itself within the simple zero frequency $\widetilde{u}_0$ of the velocity field and plays a crucial role in studying the stability of (NSC). Therefore, it is imperative to accurately analyze the linear system \eqref{1.6}, which encompasses two key features categorized based on their respective linear effects as follows:

$(1)$ (Non-zero frequencies $k \neq 0$). 
The effect of friction, caused by the change of $\Delta_{L}$ in the new coordinate system \eqref{2.5}, is amplified due to the Couette flow. This results in an enhanced dissipation \eqref{1.12} and \eqref{1.11} of the nonzero (i.e., $x$-dependent) modes in the linear system. This effect becomes significant on a timescale of $O(\nu^{-\frac{1}{3}})\ll O(\nu^{-1})$. Due to the strong coupling induced by fluid rotation, it is challenging to directly estimate non-zero frequencies during the linear stability analysis process. Inspired by the approach proposed in \cite{CWZ2024}, we introduce two quantities, namely $W^{2}$ and $Q^{2}$, which exhibit a well-defined structure as shown in \eqref{3.28-1}. Specifically, for non-zero frequencies (i.e., $k\neq 0$), we will consider two cases separately. In the case when $l=0$,  we naturally have
\begin{equation}
\begin{cases}
\partial_{t} \widehat{\overline{Q_{\neq}^{2}}}+\nu \left(k^2+\left(\eta-kt\right)^2\right) \, \widehat{\overline{Q_{\neq}^{2}}}=0, \\
\partial_{t} \widehat{\overline{W_{\neq}^{2}}}+\nu \left(k^2+\left(\eta-kt\right)^2\right)  \widehat{\overline{W_{\neq}^{2}}}=0.
\end{cases}
\end{equation}
It is evident that both $\widehat{\overline{Q_{\neq}^{2}}}$ and $\widehat{\overline{W_{\neq}^{2}}}$ exhibit enhanced dissipation effects. The most challenging aspect lies in the case $l \neq 0$. To address this, we introduce a good unknown $K^2$ and a Fourier multiplier $m$ to handle potential stretching terms in \eqref{3.7}$_{2}$ for $K^2$, thereby obtaining a uniform boundedness estimate of $m \widehat{Q_{\neq}^{2}}$ and $m \widehat{K_{\neq}^{2}}$ with respect to time, as shown in \eqref{3.24}. By fully exploiting the relationship between $W_{\neq}^{2}$, $Q_{\neq}^{2}$, and $U_{\neq}$, we establish that equations governing the evolution of $U^{1}$ and $U^{2}$ are as follows:
\begin{equation*}
\begin{cases}
\left(\partial_{X X}+\partial_{Z Z}\right) U^{1}=\partial_{Z} W^{2}-\partial_{X Y}^{L} U^{2},  \\
\left(\partial_{X X}+\partial_{Z Z}\right) U^{3}=-\partial_{Z Y}^{L} U^{2}-\partial_{X} W^{2}.
\end{cases}
\end{equation*}
This combined with Fourier transform gives the enhanced dissipation estimate of velocity field $U_{\neq}$, see Proposition \ref{pro3.2} for details.

$(2)$ (Zero frequency $k=0$). The velocity field with zero frequency $k=0$ consists of a simple zero frequency component $\widetilde{u}_{0}$ and a double zero frequency component $\overline{u}_0$. It is important to note that the zero-frequency velocity does not exhibit enhanced dissipation effects. Furthermore, the appearance of rotation makes the zero frequency velocity $u_0$ not only prevent lift-up effects but also introduce dispersion estimates. 

As for the double zero frequency component $\overline{u}_0$, \eqref{1.6} follows heat equations under incompressible conditions, where the dissipation term $\nu\Delta\overline{u}_0$ degenerates into $\nu \partial_{yy} \overline{u}_0$. The explicit expression of the double zero frequency component \eqref{3.1.5} is directly obtained through attenuation of the heat kernel. When dealing with the simple zero frequency component $\widetilde{u}_{0}$, due to coupling between equations $\widetilde{u}_{0}^1$ and $\widetilde{u}_{0}^2$, we can calculate that the eigenvalues of linear operator $\mathcal{A}$ as shown in \eqref {3..44} form a pair of conjugate complex roots as follows:
$$\lambda_1=-\nu |\eta, l|^2+i \sqrt{\beta (\beta-1)} \frac{|l|}{|\eta, l|} \quad \text{  and  } \quad \lambda_2=-\nu |\eta, l|^2-i \sqrt{\beta (\beta-1)} \frac{|l|}{|\eta, l|},$$
which play an important role in the process of deducing the expression of the simple zero frequency $\widetilde{u}_{0}$, see Proposition \ref{pro3.3} for details. The appearance of the term $i\sqrt{\beta (\beta-1)}\frac{|l|}{|\eta, l|}$ indicates the presence of a dispersion relation. By fully exploiting the dispersion mechanism and oscillatory integrals (see subsection \ref{section3.3}), we establish Theorem \ref{1.1.}--(3), which provides insights into the amplitude decay behavior of $\widetilde{u}_{0}$. This linear dispersion effect is induced by (NSC) near Couette flow. 

Furthermore, understanding how this linear behavior interacts with nonlinearity is essential for studying the nonlinear stability of (NSC).

\textit{1.6.2. Nonlinear behavior.}  
The stability threshold we obtain for the nonlinear system \eqref{1.3} is determined by the nonlinear interactions, which are established through a perturbative approach based on the linear dynamics and a nonlinear bootstrap argument (see section \ref{4.2}). A precise analysis of the quadratically nonlinear interactions is crucial in this regard. The different types of nonlinear interactions can be classified as follows:
\begin{itemize}
	\item Zero frequency and zero frequency interaction: $0 \cdot 0 \rightarrow 0$.
	\item Zero frequency and non-zero frequencies interaction:  $0   \cdot \neq \rightarrow \neq$.
	\item Non-zero frequencies and non-zero frequencies interaction: $\neq  \cdot \neq \rightarrow \neq$ and $\neq  \cdot \neq \rightarrow 0$.
\end{itemize}

$(1)$ (Non-zero frequencies $k \neq 0$). From the above classification, in order to lead a non-zero frequency output through nonlinear interactions, at least one non-zero frequency needs to be included. However, due to rotation, it is not possible to directly close the energy estimates within the framework of $Q_{\neq}^{i}$ $(i=1, 2, 3)$ as done by Bedrossian, Germain and Masmoudi \cite{MR3612004} for the classical Navier-Stokes equations. Fortunately, when dealing with the linear stability problem in section \ref{section3}, we introduce a good unknown variable  $K_{\neq}^2$. For non-zero frequencies, by combining this variable with $Q_{\neq}^2$, along with the smallness assumption \eqref{1..11-2} and Fourier multipliers $m$ and $M$, we can establish a priori assumptions on $m M \check{K}_{\neq}^2$ and $m M Q_{\neq}^2$ (see \eqref{4..8}-\eqref{4..5}) in the nonlinear system \eqref{HSX88} and \eqref{HSX99}, under our constructed bootstrap framework. It should be noted that utilizing the structure of $K^{2}$ and $Q^{2}$ allows us to establish the following relationships
\begin{align*}
\left| \widehat{U_{\neq}^{1}} \right|+  \left| \widehat{U_{\neq}^{3}} \right| & \lesssim |k,l|^{-2} \left( \left|m M K_{\neq}^{2}\right|+\left| m M \widehat{Q_{\neq}^{2}}\right| \right), \\
\left| \widehat{U_{\neq}^{2}}\right| & \lesssim |k,l|^{-1} |k, \eta-kt, l|^{-1} \left| m M \widehat{Q_{\neq}^{2}}\right|.
\end{align*} 
Hence, we immediately get the estimates of the non-zero frequencies velocity $U_{\neq}^{i}$ $(i=1, 2, 3)$, see Theorem \ref{1..1}.

$(2)$ (Simple zero frequencies $k=0, l \neq 0$).  $\widetilde{u}_{0}^{j}$ ($j=1, 2$) satisfy the following equations
\begin{align*}
\begin{cases}
\partial_{t}\widetilde{u}_{0}^{1}-\nu\Delta_{y,z}\widetilde{u}_{0}^{1}+(1-\beta)\widetilde{u}_{0}^{2}=-\widetilde{\left(u\cdot\nabla u^{1} \right)_{0}}, \\
\partial_{t}\widetilde{u}_{0}^{2}-\nu\Delta_{y,z}\widetilde{u}_{0}^{2}+\beta \partial_{zz} \Delta_{y, z}^{-1}  \widetilde{u}_{0}^{1} =-\widetilde{\left(u\cdot\nabla u^{2} \right)_{0}}+\partial_{y} \Delta_{y, z}^{-1}\widetilde{ \left(\partial_{i}u_j \partial_{j}u_i \right)_{0}},
\end{cases}
\end{align*}
which contains the cross terms $(1-\beta)\widetilde{u}_{0}^{2}$ and $\beta \partial_{zz} \Delta_{y, z}^{-1}  \widetilde{u}_{0}^{1}$. This undermines the validity of attempting to directly close a priori assumptions \eqref{4..11}--\eqref{4..12} by using the $\widetilde{u}_{0}^{j}$ equations alone. To overcome it, our strategy is to use Duhamel's principle to represent $\widehat{\widetilde{u}_{0}^{1}}(t, \eta, l)$ as
\begin{align*}
\widehat{\widetilde{u}_{0}^{1}}(t, \eta, l)&= e^{-\nu(\eta^{2}+l^{2})t}\cos(ht)\widehat{\widetilde{u}_{0in}^{1}}+\frac{\beta-1}{h}e^{-\nu(\eta^{2}+l^{2})t}\sin(ht)\widehat{\widetilde{u}_{0in}^{2}}\nonumber\\
&\quad -\int_{0}^{t}e^{-\nu(\eta^{2}+l^{2})(t-\tau)}\widehat{\widetilde{\left(u\cdot \nabla u^{1} \right)_{0}}}(\tau)d\tau,
\end{align*}
where $h\triangleq \sqrt{\beta(\beta-1)}\frac{|l|}{|\eta,l|}$.  We then use the decay property of the heat semigroup (see Lemma \ref{lem2.3}), the triangle inequality of the norm and \eqref{4..16}--\eqref{4..24} to close a priori assumption \eqref{4..11}. Closing the priori hypothesis \eqref{4..12}--\eqref{4..13} for $\widetilde{u}_{0}^2$ and $\widetilde{u}_{0}^3$ is much easier by incompressible conditions $\widetilde{u}_{0}^{3}=-\partial_{z}^{-1} \partial_{y} \widetilde{u}_{0}^{2}$ and the relation $\left|  \widehat{\widetilde{u}_{0}^{2}} \right|= \left|  \widehat{ \Delta_{y, z}^{-1} \widetilde{Q}_{0}^{2}} \right| \leqslant \left| \widehat{Q_{0}^{2}}  \right|$.

$(3)$ (Double zero frequencies $k=l=0$). Due to the relatively slow effect of linear heat equations, the nonlinear control of double zero frequencies critically depends on the absence of self-interactions within these frequencies. Assuming \eqref{1..11-2}, favorable bounds can be established for $ \bar{u}_0^1$ and $ \bar{u}_0^3$.

Based on our treatment of nonlinear behavior, all bootstrap assumptions (see Proposition \ref{pro4.1}) can be satisfied when the initial condition with smallness satisfies \eqref{1..11-2}. In other words, the threshold for nonlinear stability is $\gamma=1$.

The rest of the paper is organized as follows. In section \ref{Preliminaries} we give some preliminaries for later use. In section \ref{section3}, we discuss the linear stability effects of solutions and give a rigorous mathematical proof of Theorem \ref{1.1.}. Sections \ref{section4} and \ref{4.2} is devoted to the brief idea of proof  of Theorem \ref{1..1}, including the bootstrap hypotheses and the weighted energy estimates of non-zero frequencies and zero frequency. The  proof details of nonlinear stability are given in sections \ref{sec5} and \ref{sec7}.
 
\section{Preliminaries}\label{Preliminaries} \label{sec2}
\subsection{Notation}
Given two quantities $A$ and $B$, we 
denote $A \lesssim B$, if there exists a constant $C$ such that $A\leqslant CB$ where  $C>0$ depends only on $\sigma$, but not on $\delta$, $\nu$ and $\beta$. We similarly denote $A \ll B$ if $A \leqslant cB$ for a small constant $c \in(0, 1)$ to emphasize the small size of the implicit constant.  $\sqrt{1+t^2}$ is represented by ${\left\langle {t} \right\rangle}$ and  $\sqrt{k^2+\eta^2+l^2}$ is  denoted by $|k, \eta, l|$.
We use the shorthand notation $dV=dxdydz.$ For two functions $f$ and $g$ and a norm $\|\cdot\|_{X}$, we write 
$$\|(f,g)\|_{X}=\sqrt{\|f\|_{X}^{2}+\|g\|_{X}^{2}}.$$
Unless specified otherwise, in the rest of this section $f$ and $g$ denote functions from $\mathbb{T} \times \mathbb{R} \times \mathbb{T}$ to $\mathbb{R}^{n}$ for some $n\in \mathbb{N}$.

\subsection{Fourier analysis and multiplier} 
We introduce the following notation for the Fourier transform  of a function $f= f(x, y, z)$. Given $(k,\eta,l)\in \mathbb{Z}\times \mathbb{R}\times \mathbb{Z}$, define 
\begin{equation}\label{2.1}
\mathcal{F}(f)=\widehat{f}\left(k, \eta, l\right)=\int_{x \in \mathbb{T}}\int_{y \in \mathbb{R}} \int_{z \in \mathbb{T}}f\left(x, y, z\right)e^{- i 2\pi \left(k x+ \eta y+ l z\right)}  dxdydz.
\end{equation}
Denoting the inverse operation to the Fourier transform by $\mathcal{F}^{-1}$ or $\check{f}$
\begin{equation}\label{2.2}
\mathcal{F}^{-1}(f)=\check{f} \left(x, y, z\right)=\sum_{k \in \mathbb{Z}}\int_{\eta \in \mathbb{R}} \sum_{l \in \mathbb{Z}} f \left(k, \eta, l\right)e^{2\pi i \left(k x+ \eta y+ l z\right)}  d\eta.
\end{equation}
In addition, we also define the Fourier transform in the $z$-direction of a function $h= h(y, z)$ by
\begin{equation*}
h_l (y) = \int_{z \in \mathbb{T}}h\left(y, z\right)e^{- 2\pi i  l z} dz.
\end{equation*}
For a general Fourier multiplier with symbol  $m(k,\eta,l)$, we write $mf$ to denote $\mathcal{F}^{-1}(m(k,\eta,l)\hat{f})$. Thus, it holds
\begin{equation*}
\widehat{m(D)f}=m(k,\eta,l)\widehat{f}.
\end{equation*}
We also define the Fourier symbol of $\nabla_{L}$ and  $-\Delta_{L}$ denoted by (\ref{2.6}) and (\ref{2.7}) respectively as
\begin{equation}\label{HSX444}
|\widehat{\nabla_{L}}|=|k,\eta-kt,l|
\end{equation}
and 
\begin{equation}\label{2.8}
p(t, k, \eta, l)\triangleq \widehat{-\Delta_{L}}=k^2+(\eta- kt)^2+l^2.
\end{equation}
Note that $p$ is time dependent and thus its time derivative is 
\begin{equation}\label{2.9}
\dot{p}=-2 k (\eta- kt).
\end{equation}

To avoid conflicting with the subscript just defined, when $f$ is vector valued we use a superscript to denote the components. For example, if $f$ is valued in $\mathbb{R}^{3}$ then we write $f=(f^{1},f^{2},f^{3})$.

\subsection{Functional spaces}
The Schwartz space $\mathcal{S}(\mathbb{R}^{d})$ with $d$ being the spatial dimension is the set of smooth functions $f$ 
on $\mathbb{R}^{d}$ such that for any $k\in \mathbb{N}$, we have
$$\|f\|_{k,\mathcal{S}}\triangleq \sup_{|\alpha|\leqslant k,~ x\in \mathbb{R}^{d}}\left(1+|x|\right)^{k}\left|\partial^{\alpha}f(x)\right|<\infty.$$

The Sobolev space $H^{s} (s\geqslant0) $ is given  by  the norm 
\begin{equation}\label{HSX-A}
	\left\| f \right\|_{H^{s}}=\left\| {\left\langle {D} \right\rangle}^{s} f \right\|_{L^2}=\left\| {\left\langle {k, \eta, l} \right\rangle}^{s} \widehat{f} \right\|_{L^2}.
\end{equation}
Recall that, for $s>\frac{3}{2}$, $H^{s}$ is an algebra. Hence, for any $f,g\in H^{s}$, one has
\begin{align}\label{2.3}
	\left\|f g\right\|_{H^{s}} \lesssim \left\|f \right\|_{H^{s}}\left\|g\right\|_{H^{s}}.
\end{align}
We write the associated inner product as
$$\langle f,g \rangle_{H^{s}}=\int \langle\nabla\rangle^{s}f\cdot \langle \nabla \rangle^{s} g dV.$$
For function $f(t,x,y,z)$ of space and time defined on the time interval $(a,b)$, we define the Banach space $L^{p}(a,b;H^{s})$ for $1\leqslant p\leqslant \infty$ by the norm
$$\|f\|_{L^{p}(a,b; H^{s})}=\|\|f\|_{H^{s}}\|_{L^{p}(a,b)}.$$
For simplicity of notation, we usually simply write $\|f\|_{L^{p}H^{s}}$ since the time-interval of integrating in this work will be the same basically everywhere.

Let us recall definitions of the Besov space $B_{p,q}^{s}$. For details, see Bergh and L$\ddot{\text{o}}$fstr$\ddot{\text{o}}$m \cite{BL1976}. We first introduce the Littlewood-Paley decomposition by means of $\{\varphi_{j}\}_{j=-\infty}^{\infty}$. Take a function $\phi\in C_{0}^{\infty}(\mathbb{R}^{3})$ with $
\text{supp}~ \phi=\{\xi\in \mathbb{R}^{3}: 1/2\leqslant |\xi|\leqslant 2\}$ such that $\sum_{j=-\infty}^{\infty}\phi (2^{-j}\xi)=1$ for all $\xi\neq 0$. The functions $\varphi_{j} (j=0, \pm 1,...)$ and $\psi$ are defined by
$$ \mathcal{F}\varphi_{j}(\xi)=\phi(2^{-j}\xi), \quad \mathcal{F}\psi(\xi)=1-\sum_{j=1}^{\infty}\phi(2^{-j}\xi),  $$
where $\mathcal{F}$ denotes the Fourier transform. Then, for $s\in \mathbb{R}$ and $1\leqslant p,q\leqslant \infty,$ we write
\begin{align}
   \|f\|_{B_{p,q}^{s}}&\equiv\|\psi\ast f\|_{L^{p}}+\left(\sum_{j=1}^{\infty}(2^{sj}\|\varphi_{j}\ast f\|_{L^{p}})^{q}\right)^{1/q},\quad 1\leqslant q<\infty,\nonumber\\
   \|f\|_{B_{p,\infty}^{s}}&\equiv \|\psi\ast f\|_{L^{p}}+\sup_{1\leqslant j<\infty} (2^{sj}\|\varphi_{j}\ast f\|_{L^{p}}),\quad  q=\infty,\nonumber
\end{align}
where $L^{p}$ denotes the usual Lebesgue space on $\mathbb{R}^{3}$ with the norm $\|\cdot\|_{L^{p}}$. The Besov space $B_{p,q}^{s}$ is defined by 
$$B_{p,q}^{s}\equiv\{f\in \mathcal{S}^{'}:\|f\|_{B_{p,q}^{s}}<\infty \}.$$
When $p=q=2$, it is easy to verify that Besov space $B_{2,2}^{s}(\mathbb{R}^{3})$ coincides with the Sobolev space $H^{s}(\mathbb{R}^{3})$.

The following lemma collects a few properties of  $f_0$  and  $f_{\neq}$, which will be frequently used in the subsequent sections. These properties can be easily verified via the  definition of zero  and non-zero frequencies as in \eqref{HSX333}. 
\begin{lem}
	Assume that the function  $f(x,y,z)$   is sufficiently regular. 
	Then it holds
	
	$(1)$  $f_0$  and  $f_{\neq}$  obey the following basic properties
	\begin{align*}
	&\left(\partial_{x} f\right)_0=\partial_{x} f_0=0, \quad \left(\partial_{y} f\right)_0=\partial_{y} f_0, \quad \left(\partial_{z} f\right)_0=\partial_{z} f_0, \\
	&\left( f_{\neq} \right)_0=0, \quad \left( \partial_{y} f \right)_{\neq}=\partial_{y} f_{\neq}, \quad \left( \partial_{z} f \right)_{\neq}=\partial_{z} f_{\neq}.
	\end{align*}
	
	$(2)$ If  $f$  is a divergence-free vector field, namely  $\nabla \cdot f=0$, then  $f_0$  and  $f_{\neq}$  are also divergence-free
	\begin{equation*}
	\nabla \cdot f_0=0 \quad \text { and } \quad \nabla \cdot f_{\neq}=0.
	\end{equation*}
	
	$(3)$  $f_0$  and  $f_{\neq}$  are orthogonal in  $L^{2}$, namely
	\begin{equation*}
	(f_0, f_{\neq})\triangleq\int f_0 f_{\neq} d V=0, \quad\|f\|_{L^{2}}^{2}=\|f_0\|_{L^{2}}^{2}+\|f_{\neq}\|_{L^{2}}^{2} .
	\end{equation*}
	In addition,  $\| f_0 \|_{L^{2}} \leqslant \|f\|_{L^{2}}$  and  $\|f_{\neq}\|_{L^{2}} \leqslant \|f\|_{L^{2}}$.
	
	Furthermore, if we replace $f_0$  with $\bar{f}$ and $f_{\neq}$ with $\widetilde{f}$, respectively, the above  properties are also valid.
\end{lem}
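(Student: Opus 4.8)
The statement is the elementary decomposition lemma for the $x$-averaging projections $P_{k=0}$ and $P_{\neq}$ defined in \eqref{HSX333}, so the proof is a direct verification at the level of Fourier series in the periodic $x$ variable. The plan is to write $f(x,y,z)=\sum_{k\in\mathbb{Z}} \widehat{f}_k(y,z)\, e^{2\pi i k x}$, so that $f_0=\widehat{f}_0(y,z)$ is exactly the $k=0$ mode (independent of $x$) and $f_{\neq}=\sum_{k\neq 0}\widehat{f}_k(y,z)\,e^{2\pi i k x}$ collects the remaining modes. Every assertion then reduces to a one-line observation about which Fourier modes survive a given operation.

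First I would prove $(1)$. Since $\partial_x$ acts on the $k$-th mode as multiplication by $2\pi i k$, it annihilates the $k=0$ mode and preserves the set of nonzero modes; hence $(\partial_x f)_0 = 0 = \partial_x f_0$ and $(\partial_x f)_{\neq}=\partial_x f_{\neq}$. The operators $\partial_y,\partial_z$ act only on the coefficient functions $\widehat{f}_k(y,z)$ and do not mix different $k$, so they commute with both projections, giving $(\partial_y f)_0=\partial_y f_0$, $(\partial_z f)_0=\partial_z f_0$ and the corresponding identities for $P_{\neq}$. The identity $(f_{\neq})_0=0$ is immediate because $f_{\neq}$ has no $k=0$ mode. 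For $(2)$, write $\nabla\cdot f=\partial_x f^1+\partial_y f^2+\partial_z f^3$; applying $P_{k=0}$ and using the commutation relations from $(1)$ yields $\nabla\cdot f_0=\partial_y f_0^2+\partial_z f_0^3=(\nabla\cdot f)_0=0$, and applying $P_{\neq}$ gives $\nabla\cdot f_{\neq}=(\nabla\cdot f)_{\neq}=0$. For $(3)$, orthogonality in $L^2(dV)$ follows from $\int_{\mathbb{T}} e^{2\pi i k x}\,dx=0$ for $k\neq 0$: expanding the product $f_0 f_{\neq}$ in $x$-modes, every term carries a nonzero frequency $e^{2\pi i k x}$ and integrates to zero over $\mathbb{T}$; the Pythagorean identity $\|f\|_{L^2}^2=\|f_0\|_{L^2}^2+\|f_{\neq}\|_{L^2}^2$ is then the expansion of $\|f_0+f_{\neq}\|_{L^2}^2$ using this orthogonality, and the bounds $\|f_0\|_{L^2}\le\|f\|_{L^2}$, $\|f_{\neq}\|_{L^2}\le\|f\|_{L^2}$ are immediate consequences. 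Finally, the claim that the same statements hold with $f_0,f_{\neq}$ replaced by $\overline{f},\widetilde{f}$ is proved verbatim, replacing the Fourier expansion in $x$ by the Fourier expansion in the periodic variable $z$; all of the above arguments carry over word for word because $\partial_x$ and $\partial_z$ play symmetric roles and $\partial_y$ still commutes with the $z$-averaging projection.

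There is no real obstacle here; the only point requiring a modicum of care is to state the commutation relations between $\partial_x,\partial_y,\partial_z$ and the projections cleanly before using them in parts $(2)$ and $(3)$, and to note that the orthogonality computation needs only Fubini together with $\int_{\mathbb{T}}e^{2\pi i kx}\,dx=\delta_{k,0}$ (so that no regularity beyond $f\in L^2_{\mathrm{loc}}$, say $f$ sufficiently regular as assumed, is actually needed). I would present the argument compactly, grouping $(1)$ as the computational core and deriving $(2)$ and $(3)$ as corollaries.
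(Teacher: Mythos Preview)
Your proposal is correct and matches the paper's approach: the paper does not give a proof at all, simply stating that ``these properties can be easily verified via the definition of zero and non-zero frequencies as in \eqref{HSX333}.'' Your Fourier-series verification is exactly the routine check the paper has in mind, written out in full.
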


Note that the enhanced dissipation only affects the non-zero frequencies part  of fluid. So, it is essential to separate the zero frequency and the non-zero frequencies of velocity  in the calculation process. For
given functions $f$ and $h$, it is  not hard to find that 
\begin{equation*}
\left( f h \right)_0 = f_0 h_0 + \left(f_{\neq} h_{\neq} \right)_0 \quad \text{ and } \quad \left( f g \right)_{\neq} = f_0 h_{\neq} + f_{\neq} h_0 + \left( f_{\neq} h_{\neq} \right)_{\neq}.
\end{equation*}

We give the following  decay estimates of the heat semigroup  $e^{t \Delta}$, which is essential to get the energy estimates of zero frequency in Besov and Sobolev spaces.
\begin{lem}[\cite{KOT2003}]\label{lem2.3}
	Let  $-\infty < s_{0} \leqslant s_{1} < +\infty$, $1 \leqslant p, q \leqslant +\infty$. Then there holds
	\begin{align}\label{2..9}
		\left\|e^{t \Delta} f \right\|_{B_{p, q}^{s_{1}}(\mathbb{R}^3)} \leqslant & C\left(1+t^{-\frac{1}{2}\left(s_{1}-s_{0}\right)}\right)\| f \|_{B_{p, q}^{s_{0}}(\mathbb{R}^3)},
	\end{align}
in particular, for $p=q=2$, one has
	\begin{align}\label{2..10}
	\left\|e^{t \Delta} f \right\|_{H^{s_{1}}(\mathbb{R}^{3})} \leqslant & C\left(1+t^{-\frac{1}{2}\left(s_{1}-s_{0}\right)}\right)\| f \|_{H^{s_{0}}(\mathbb{R}^{3})}.
	\end{align}
Furthermore,  \eqref{2..9} and \eqref{2..10} also hold for the spatial domain  $\Omega=\mathbb{T} \times \mathbb{R} \times \mathbb{T}$.
\end{lem}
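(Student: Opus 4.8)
\textbf{Proof proposal for Lemma \ref{lem2.3} (decay estimates of the heat semigroup).}

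The plan is to prove the estimate first in the whole-space setting $\mathbb{R}^3$ via a Littlewood--Paley / Fourier-multiplier argument, and then transfer it to the mixed domain $\Omega = \mathbb{T} \times \mathbb{R} \times \mathbb{T}$ by the same dyadic decomposition adapted to the partially discrete Fourier spectrum. First I would record the elementary single-block bound: on a dyadic annulus $|\xi| \sim 2^j$ the heat multiplier $e^{-t|\xi|^2}$ satisfies, by Bernstein's lemma together with the pointwise inequality $e^{-t|\xi|^2} \lesssim (1 + t\,2^{2j})^{-N}$ for any $N$ (using $|\xi|^2 \sim 2^{2j}$ on the support of $\varphi_j$), the block estimate
\begin{align*}
\left\| \varphi_j * e^{t\Delta} f \right\|_{L^p} \lesssim e^{-c\,t\,2^{2j}} \left\| \varphi_j * f \right\|_{L^p},
\end{align*}
and the analogous bound for the low-frequency piece $\psi * e^{t\Delta} f$, where the multiplier is harmless and bounded on $L^p$ uniformly by Young's inequality since the convolution kernel $\psi * (\text{heat kernel})$ has $L^1$ norm bounded independently of $t$ on the relevant frequency range. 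The key quantitative point is that $2^{s_1 j} e^{-c t 2^{2j}} = 2^{s_0 j} \cdot 2^{(s_1 - s_0)j} e^{-c t 2^{2j}}$, and for $s_1 \geq s_0$ the factor $2^{(s_1-s_0)j} e^{-c t 2^{2j}}$ is bounded by $C(1 + t^{-\frac12 (s_1-s_0)})$ uniformly in $j \geq 1$ — this is a one-variable calculus fact: the function $r \mapsto r^{(s_1-s_0)/2} e^{-ctr}$ on $r = 2^{2j} \geq 1$ has maximum comparable to $\max(1, t^{-(s_1-s_0)/2})$.

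Next I would assemble the $B_{p,q}^{s_1}$ norm: writing out the definition,
\begin{align*}
\left\| e^{t\Delta} f \right\|_{B_{p,q}^{s_1}} = \left\| \psi * e^{t\Delta} f \right\|_{L^p} + \left( \sum_{j \geq 1} \left( 2^{s_1 j} \left\| \varphi_j * e^{t\Delta} f \right\|_{L^p} \right)^q \right)^{1/q},
\end{align*}
I substitute the block bounds, pull out the uniform factor $C(1 + t^{-\frac12(s_1 - s_0)})$ from each dyadic term, and recognize the remaining sum as exactly $\left( \sum_{j \geq 1} (2^{s_0 j} \|\varphi_j * f\|_{L^p})^q \right)^{1/q} \leq \|f\|_{B_{p,q}^{s_0}}$, with the low-frequency term handled by boundedness of the heat semigroup on $L^p$ (so it is $\lesssim \|\psi * f\|_{L^p} \leq \|f\|_{B_{p,q}^{s_0}}$). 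The case $q = \infty$ is identical with the sum replaced by a supremum. The particular case $p = q = 2$ gives \eqref{2..10} since $B_{2,2}^{s} = H^{s}$ as already noted in the text; alternatively one proves \eqref{2..10} directly on the Fourier side from $\widehat{e^{t\Delta} f} = e^{-t|\xi|^2}\widehat{f}$ and the same scalar inequality $\langle\xi\rangle^{2s_1} e^{-2t|\xi|^2} \lesssim (1 + t^{-(s_1-s_0)})\langle\xi\rangle^{2s_0}$, integrating in $\xi$.

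Finally, for the domain $\Omega = \mathbb{T} \times \mathbb{R} \times \mathbb{T}$ the argument is unchanged: the Fourier variable is $(k,\eta,l) \in \mathbb{Z} \times \mathbb{R} \times \mathbb{Z}$, the heat multiplier is again $e^{-t(k^2 + \eta^2 + l^2)}$, and one uses the same dyadic partition of frequency space (now a mix of sums over $k,l$ and an integral over $\eta$); Bernstein's inequality and the block decay estimate hold verbatim in this setting (for the $L^p$-type statements one invokes the standard Littlewood--Paley theory on $\mathbb{T}^d \times \mathbb{R}^{d'}$), so the same bookkeeping yields \eqref{2..9} and \eqref{2..10} over $\Omega$. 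I do not anticipate a genuine obstacle here — the content is classical; the only mildly delicate point is the uniform-in-$j$ estimate of $2^{(s_1-s_0)j} e^{-ct 2^{2j}}$ and the correct handling of the low-frequency block on $L^p$ (where one cannot use a pure decay gain but only uniform boundedness), so I would state those two sub-steps carefully. Since the lemma is quoted verbatim from \cite{KOT2003}, in the actual paper it would suffice to cite that reference; the sketch above is the proof one would reconstruct if needed.
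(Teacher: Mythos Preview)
The paper does not give a proof of this lemma; it is stated with a citation to \cite{KOT2003} and used as a known result. Your sketch via Littlewood--Paley block decay and the scalar bound $2^{(s_1-s_0)j}e^{-ct2^{2j}}\lesssim 1+t^{-(s_1-s_0)/2}$ is the standard argument and is correct, and you already correctly anticipated that in the actual paper a citation suffices.
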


In the field of harmonic analysis, the Van der Corput lemma is an estimate for oscillatory integrals. In this paper, we use this lemma to obtain the dispersive estimates on the zero frequency of  velocity. 
\begin{lem}[\textbf{Van der Corput lemma} \cite{SM1993}]\label{lem2.1}
	Suppose  $\phi(\xi)$ is real-valued and smooth in $(a, b)$, and that $\left|\phi^{(k)}(\xi)\right| \geqslant 1$  for all  $\xi \in(a, b)$. For any  $\lambda \in \mathbb{R}$, there is a positive constant  $c_{k}$, which does not depend on  $\phi$ and  $\lambda$  such that
	\begin{equation}
		\left|\int_{a}^{b} e^{i \lambda \phi(\xi)} d\xi \right| \leqslant c_{k} \lambda^{-\frac{1}{k}}
	\end{equation}
	holds when:
	
	$(1)$  $k \geqslant 2$, or
	
	$(2)$  $k=1$ and $\phi(\xi)$ is monotone.
\end{lem}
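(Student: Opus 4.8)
The plan is to prove Lemma~\ref{lem2.1} by the classical van der Corput induction on the order $k$, taking care that the constant $c_k$ depends only on $k$ and, crucially, not on the endpoints $a,b$; this uniformity in $a,b$ is the whole point of the statement and forces the trivial "length" estimate $\bigl|\int_I\bigr|\leqslant|I|$ to be used only on very short subintervals. Throughout I may assume $\lambda>0$ (the case $\lambda=0$ is vacuous and $\phi\mapsto-\phi$ reduces $\lambda<0$ to $\lambda>0$), and I read hypothesis $(2)$ as the standard requirement that $\phi'$ be monotone on $(a,b)$. The induction will produce $c_k$ recursively, so in particular it is explicit.

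For the base case $k=1$ I would integrate by parts, writing $e^{i\lambda\phi}=(i\lambda\phi')^{-1}\frac{d}{d\xi}e^{i\lambda\phi}$, so that
\begin{equation*}
\int_a^b e^{i\lambda\phi}\,d\xi=\left[\frac{e^{i\lambda\phi}}{i\lambda\phi'}\right]_a^b-\frac{1}{i\lambda}\int_a^b e^{i\lambda\phi}\,\frac{d}{d\xi}\Bigl(\frac{1}{\phi'}\Bigr)\,d\xi .
\end{equation*}
Since $|\phi'|\geqslant 1$ the boundary term is $\leqslant 2/\lambda$; and since $\phi'$ is monotone with $|\phi'|\geqslant 1$ it cannot change sign, so $1/\phi'$ is monotone with values in $(0,1]$ or $[-1,0)$ and hence of total variation $\leqslant 1$, which bounds the remaining integral by $1/\lambda$. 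This gives $\bigl|\int_a^b e^{i\lambda\phi}\bigr|\leqslant 3\lambda^{-1}$, i.e. one may take $c_1=3$.

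For the inductive step, assume the claim for $k-1\geqslant1$ with constant $c_{k-1}$, and suppose $|\phi^{(k)}|\geqslant1$ on $(a,b)$. Then $\phi^{(k)}$ has constant sign (being continuous and nonvanishing), say $\phi^{(k)}\geqslant1$ after replacing $\phi$ by $-\phi$, so $\phi^{(k-1)}$ is increasing and has at most one zero $c\in[a,b]$ (if none, take $c$ to be the endpoint where $|\phi^{(k-1)}|$ is smallest). For a parameter $\delta>0$ to be optimized, I split $(a,b)$ into $J_0=(a,b)\cap(c-\delta,c+\delta)$ and the at most two complementary intervals; on $J_0$ estimate crudely by $|J_0|\leqslant 2\delta$, while on each complementary interval $|\phi^{(k-1)}|\geqslant\delta$ by monotonicity together with $\phi^{(k)}\geqslant1$, so applying the inductive hypothesis to the phase $\phi/\delta$ with frequency $\lambda\delta$ (noting that when $k-1=1$ the function $\phi'/\delta$ is automatically monotone since $\phi''\geqslant1>0$) bounds each such contribution by $c_{k-1}(\lambda\delta)^{-1/(k-1)}$. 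Altogether
\begin{equation*}
\Bigl|\int_a^b e^{i\lambda\phi}\,d\xi\Bigr|\leqslant 2\delta+2c_{k-1}(\lambda\delta)^{-\frac{1}{k-1}},
\end{equation*}
and the choice $\delta=\lambda^{-1/k}$ equalizes the two terms and produces the bound $(2+2c_{k-1})\lambda^{-1/k}$, so $c_k=2+2c_{k-1}$ closes the induction.

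I do not expect a genuine obstacle, since this is a classical argument; the two points to watch are (i) never invoking $\bigl|\int_I\bigr|\leqslant|I|$ except on the short interval $J_0$, since otherwise the constant would acquire a dependence on $b-a$, and (ii) checking that the monotonicity needed for the base case is automatically available inside the induction once one normalizes by the sign of the top derivative. The sole "computation" is the balancing $\delta=\lambda^{-1/k}$, found by equating the two error terms.
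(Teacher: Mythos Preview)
Your proof is correct and follows the classical induction argument for van der Corput's lemma, essentially as in Stein's book \cite{SM1993}. The paper does not supply its own proof of this lemma; it is simply cited as a known result from harmonic analysis, so there is no in-paper argument to compare against. Your reading of hypothesis~$(2)$ as requiring $\phi'$ (rather than $\phi$) to be monotone is the standard and correct one: if $|\phi'|\geqslant 1$ then $\phi$ is automatically monotone, so the stated condition would be vacuous, whereas monotonicity of $\phi'$ is precisely what makes the total-variation bound on $1/\phi'$ work in your integration-by-parts step.
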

 
\section{The linear stability }\label{section3}
In this section, we elaborate on  the linear stability results and corresponding proof of (NSC) near the Couette flow,  see Theorem \ref{1.1.}.  Now we neglect the effect of nonlinear terms on the right-hand side of \eqref{1.3},  that is, we only need to consider the  stability of the linearized  equations:
\begin{equation}\label{1.6}
	\begin{cases}
			\partial_{t} u+ y\partial_{x} u-\nu \Delta u+\begin{pmatrix}
					(1-\beta)u^2 \\
					\beta u^1    \\
					0
				\end{pmatrix}+(\beta-2)\nabla \Delta^{-1}\partial_{x}u^{2}-\beta\nabla \Delta^{-1}\partial_{y}u^{1}=0, \\
			u(t=0)=u_{\mathrm{in}}.
		\end{cases}
\end{equation}

The proof of Theorem \ref{1.1.}    can be derived by the following Propositions  \ref{pro3.2}, \ref{pro3.3} and \ref{pro3.4}. In order to achieve this, we list the subsections \ref{section3.1}-\ref{section3.3} below.

\subsection{Linear enhanced dissipation and inviscid damping on $U_{\neq}$}\label{section3.1}
In this section, we are committed to deriving the estimates of the enhanced dissipation on the non-zero frequency velocity field $U_{\neq}$, and as a by-product, obtaining the inviscid damping of $U_{\neq}^2$. To do this, by introducing three quantities $q^{2}\triangleq\Delta u^2$,   $W^2$ and   $K^2$ in Fourier space and using the decay property  provided by the Fourier multiplier $m$,  we establish the appropriate energy functional to get \eqref{1.12} and \eqref{1.11}. To begin with, we give the definitions of multipliers $m$ and $M$.
Consider the following linear equation
\begin{equation}\label{4...1}
	\frac{1}{2}\partial_{t}f+ \partial_{X Y}^{L} \Delta_{L}^{-1}f-\nu \Delta_{L} f=0,
\end{equation}
which can be seen as a competition between the linear stretching term  $\partial_{X Y}^{L} \Delta_{L}^{-1}f$ and the dissipation term  $\nu \Delta_{L} f$.
Applying the Fourier transform to \eqref{4...1} yields
\begin{equation*}
	\frac{1}{2} \partial_{t}\widehat{f}+\frac{ k (\eta- kt)}{k^2+(\eta- kt)^2+l^2} \widehat{f}+\nu \left(k^2+(\eta- kt)^2+l^2\right) \widehat{f}=0.
\end{equation*}
If $k\neq 0$, the factor $\frac{ k (\eta- kt)}{k^2+(\eta-kt)^2+l^2}$ is positive for $ t<\frac{\eta}{k}$,  in which case the term $ \partial_{X Y}^{L} \Delta_{L}^{-1}f$ can be viewed as a damping term. As for the factor $\nu \left(k^2+(\eta- kt)^2+l^2\right)$, this implies enhanced dissipation for $k\neq 0$. When $k\neq 0$, we find that  the following inequality is always true, which compares the sizes of these two factors 
\begin{equation}\label{4...3}
	\nu \left( k^2+\left(\eta- kt\right)^2+l^2 \right) \gg \frac{ |k \left( \eta- kt\right)|}{k^2+\left(\eta- kt\right)^2+l^2}, \text{ if } \Big| t-\frac{\eta}{k} \Big|\gg \nu^{-\frac{1}{3}}.
\end{equation}
Indeed,    $\frac{ |k \left( \eta- kt\right)|}{\nu \left(k^2+\left(\eta- kt\right)^2+l^2\right)^2} \leqslant \frac{| t-\frac{\eta}{k}|}{\nu \left(1+| t - \frac{\eta}{k}|^2\right)^2} \leqslant 1$ and note that  $\dfrac{ x}{\nu(1+x^2)^2} \ll 1$ for $|x|\gg  \nu^{-\frac{1}{3}}$. 
Thus, from \eqref{4...3}, if $\Big| t-\frac{\eta}{k} \Big|\gg \nu^{-\frac{1}{3}}$, the dissipation term $\nu \Delta_{L} f$ overcomes the stretching term $ \partial_{XY}^{L}\Delta_{L}^{-1}f$. As a result, when  $0 < t -\frac{\eta}{k}\lesssim \nu^{-\frac{1}{3}}$, the stretching overcomes dissipation. The following  Figure \ref{fig:diagram1} better illustrates the game relationship between dissipation and stretching
\begin{figure}[H]
	\centering
	\includegraphics[width=0.8\textwidth]{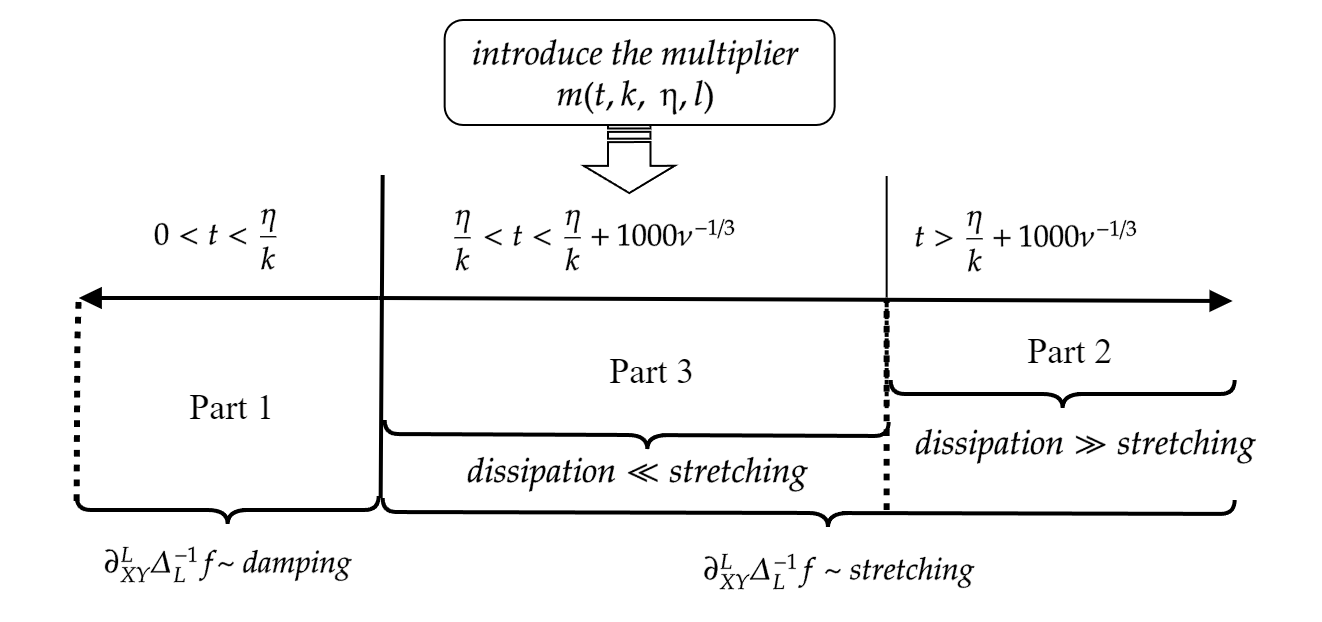}
	\caption{dissipation versus stretching}
	\label{fig:diagram1}
\end{figure}

In fact,  the trickiest is the case when  $0 <  t -\frac{\eta}{k}\lesssim \nu^{-\frac{1}{3}}$. To deal with this rang of $t$,  we  introduce some definitions of  the Fourier multipliers.

(1) Define the multiplier $m(t, k, \eta, l)$ by $m(t=0, k, \eta, l)=1$ and the following ordinary differential  equations:
\begin{align}\label{4...4}
	\frac{\dot{m}}{m}=\left\{\begin{array}{ll}
		\frac{ k \left( \eta- kt \right)}{k^2+\left(\eta- kt\right)^2+l^2} & \text{ if }  t \in \left[ \frac{\eta}{k}, \frac{\eta}{k}+1000 \nu^{-\frac{1}{3}} \right], \\
		0  & \text{ if }  t \notin \left[ \frac{\eta}{k}, \frac{\eta}{k}+1000 \nu^{-\frac{1}{3}} \right].
	\end{array}\right.
\end{align}
This multiplier is such that if $f$ solves the above equation (\ref{4...1}) and $0< t-\frac{\eta}{k}<1000\nu^{-1/3} $, then $mf$ solves
$$ \partial_{t}(mf)-\nu \Delta_{L}(mf)=0,$$
and this equation is perfectly well behaved. That is, the growth that $f$ undergoes is balanced by the decay of the multiplier $m$.

$(2)$ Define the additional multiplier $M(t, k, \eta, l)$ by $M(t=0, k, \eta, l)=1$ and
\begin{itemize}
	\item if  $k=0$, $M(t, k=0, \eta, l)=1$ for all $t$;
	\item if  $k \neq 0$,
	\begin{align}\label{4.5}
		\frac{\dot{M}}{M}&= \frac{- \nu^{\frac{1}{3}} }{\left[\nu^{\frac{1}{3}} \big( t-\frac{\eta}{k} \big)\right]^{2}+1}.
	\end{align}
\end{itemize}

For the expressions and properties of the multiplier $m$, there is a similar situation as in \cite{MR3612004}, so we use it as a known conclusion and ignore the proof process.
\begin{lem}
	$(1)$ The multiplier $m(t, k, \eta, l)$ can be given by the following exact formula:
	\begin{itemize}
		\item if $k=0$: $m(t, k=0, \eta, l)=m(t=0, k, \eta, l)=1$;
		\item if  $k \neq 0$, $\frac{\eta}{k}+1000  \nu^{-\frac{1}{3}}<0$: $m(t, k, \eta, l)=1$;
		\item if  $k \neq 0$, $\frac{\eta}{k}<0$ and $\frac{\eta}{k}+1000 \nu^{-\frac{1}{3}}>0$:
		\begin{align}
			m(t, k, \eta, l)=\left\{\begin{array}{ll}
				\dfrac{\sqrt{k^{2}+\eta^{2}+l^{2}}}{\sqrt{k^{2}+(\eta- k t)^{2}+l^{2}}}  & \text{ if  } 0< t<\frac{\eta}{k}+1000  \nu^{-\frac{1}{3}}, \\
				\dfrac{\sqrt{k^{2}+\eta^{2}+l^{2}}}{\sqrt{k^{2}+\left(1000 k \nu^{-\frac{1}{3}}\right)^{2}+l^{2}}} & \text{ if  }  t>\frac{\eta}{k}+1000 \nu^{-\frac{1}{3}};
			\end{array}\right.
		\end{align}
		\item if  $k \neq 0$, $\frac{\eta}{k}>0$:
		\begin{align}
			m(t, k, \eta, l)=\left\{\begin{array}{lll}
				1  &\text{if}~  t<\frac{\eta}{k}, \\
				\dfrac{\sqrt{k^{2}+l^{2}}}{\sqrt{k^{2}+(\eta- k t)^{2}+l^{2}}} & \text{if}~ \frac{\eta}{k}<  t<\frac{\eta}{k}+1000 \nu^{-\frac{1}{3}}, \\
				\dfrac{\sqrt{k^{2}+l^{2}}}{\sqrt{k^{2}+\left(1000 k  \nu^{-\frac{1}{3}}\right)^{2}+l^{2}}}  & \text{ if}~   t>\frac{\eta}{k}+1000  \nu^{-\frac{1}{3}}.
			\end{array}\right.
		\end{align}
	\end{itemize}
	
	$(2)$ In particular, $m(t, k, \eta, l)$ are bounded above and below, but its lower bound depends on $\nu$ 
	\begin{equation}\label{4.11}
	\nu^{\frac{1}{3}}\lesssim m\left(t, k, \eta, l \right) \leqslant 1.
	\end{equation}
	
	$(3)$ $m(t, k, \eta, l)$ and the frequency have the following relationship: 
	\begin{equation}\label{lem4.1.11}
		m\left(t, k, \eta, l \right) \gtrsim \frac{\sqrt{k^2+l^2}}{\sqrt{k^2+\left(\eta- kt\right)^2+l^2}}.
	\end{equation}	
\end{lem}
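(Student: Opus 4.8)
The statement to prove is the lemma describing the explicit formula and basic bounds for the multiplier $m(t,k,\eta,l)$ defined by the ODE \eqref{4...4}. Since the ODE is linear in $\log m$, the plan is to integrate $\frac{\dot m}{m}$ over the relevant regime and read off the three parts of the claim.

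\textbf{Deriving the exact formula (part (1)).} The plan is to integrate \eqref{4...4} directly. Since $\frac{\dot m}{m} = \frac{d}{dt}\log m$, on the interval $[\tfrac{\eta}{k}, \tfrac{\eta}{k}+1000\nu^{-1/3}]$ we have
\[
\log m(t) - \log m(t_0) = \int_{t_0}^{t} \frac{k(\eta-ks)}{k^2+(\eta-ks)^2+l^2}\, ds = -\frac{1}{2}\Big[\log\big(k^2+(\eta-ks)^2+l^2\big)\Big]_{t_0}^{t},
\]
using $\frac{d}{ds}\big(k^2+(\eta-ks)^2+l^2\big) = -2k(\eta-ks)$. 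Hence $m(t) = m(t_0)\,\frac{\sqrt{k^2+(\eta-kt_0)^2+l^2}}{\sqrt{k^2+(\eta-kt)^2+l^2}}$ on that interval, while $m$ is constant off it. The four bullet cases ($k=0$; $k\neq0$ with $\tfrac{\eta}{k}+1000\nu^{-1/3}<0$; $\tfrac{\eta}{k}<0<\tfrac{\eta}{k}+1000\nu^{-1/3}$; $\tfrac{\eta}{k}>0$) are then obtained by tracking which portion of the time axis $[0,\infty)$ overlaps the support interval $[\tfrac{\eta}{k},\tfrac{\eta}{k}+1000\nu^{-1/3}]$, using the normalization $m(0)=1$ and gluing the constant pieces to the logarithmically-integrated piece at the endpoints. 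For instance, when $\tfrac{\eta}{k}>0$ one has $m\equiv 1$ before $t=\tfrac{\eta}{k}$ (so $t_0=\tfrac{\eta}{k}$, giving numerator $\sqrt{k^2+l^2}$), the sliding formula on the middle interval, and freezing at the right endpoint value afterwards.

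\textbf{Uniform bounds (part (2)) and frequency comparison (part (3)).} For the upper bound $m\leq 1$: since the integrand in \eqref{4...4} is positive precisely for $s<\tfrac{\eta}{k}$ and we only integrate over $s\geq\tfrac{\eta}{k}$ within the support interval, $\frac{\dot m}{m}\leq 0$ throughout, so $m$ is non-increasing and $m(t)\leq m(0)=1$. For the lower bound, the plan is to use the worst case among the explicit formulas: the smallest value of $m$ occurs when the denominator $\sqrt{k^2+(\eta-kt)^2+l^2}$ is maximized over the support interval, i.e. at $t=\tfrac{\eta}{k}+1000\nu^{-1/3}$, giving $(\eta-kt)^2 = (1000 k\nu^{-1/3})^2$, while the numerator is at least $\sqrt{k^2+l^2}\gtrsim |k,\eta,l|$ is not quite right — instead one bounds $m \gtrsim \frac{\sqrt{k^2+l^2}}{\sqrt{k^2 + 10^6 k^2\nu^{-2/3}+l^2}} \gtrsim \frac{\sqrt{k^2+l^2}}{\nu^{-1/3}\sqrt{k^2+l^2}} = \nu^{1/3}$, using $k^2\leq k^2\nu^{-2/3}$ and $l^2 \leq l^2\nu^{-2/3}$ for $\nu\in(0,1)$. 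Part (3) is then immediate: in every case the numerator is one of $\sqrt{k^2+\eta^2+l^2}$, $\sqrt{k^2+l^2}$, or (when $m=1$) we use $m=1\geq \frac{\sqrt{k^2+l^2}}{\sqrt{k^2+(\eta-kt)^2+l^2}}$ trivially, and the denominator, being frozen at the endpoint, is $\lesssim \sqrt{k^2+(\eta-kt)^2+l^2}$ on the whole relevant time range since $|\eta-kt|$ is largest at the right endpoint of a length-$1000\nu^{-1/3}$ interval — comparing $(\eta-kt)^2$ at an interior time with $(1000k\nu^{-1/3})^2$ up to a harmless constant.

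\textbf{Main obstacle.} The computations are elementary; the one place requiring care is the bookkeeping in part (3) (and in the lower bound) when $t$ lies strictly inside the support interval rather than at the frozen endpoint — one must check that replacing the denominator $\sqrt{k^2+(\eta-kt)^2+l^2}$ at the frozen value $t=\tfrac{\eta}{k}+1000\nu^{-1/3}$ by its value at a general $t$ in $[\tfrac{\eta}{k},\tfrac{\eta}{k}+1000\nu^{-1/3}]$ only costs an absolute constant, which follows because on that interval $|\eta-kt|\leq 1000|k|\nu^{-1/3}$ so $k^2+(\eta-kt)^2+l^2 \leq (1+10^6\nu^{-2/3})(k^2+l^2) \lesssim \nu^{-2/3}(k^2+l^2)$, matching the frozen-endpoint denominator up to constants. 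Since the excerpt explicitly says this lemma parallels \cite{MR3612004} and may be quoted, I would keep the proof brief, presenting the integration of the ODE and the two bound arguments, and omit the full case-by-case verification.
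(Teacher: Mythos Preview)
Your approach is correct and is the standard argument; the paper itself omits the proof entirely, citing \cite{MR3612004} and treating the lemma as known. Your direct integration of $\dot m/m$ via the antiderivative $-\tfrac12\log(k^2+(\eta-ks)^2+l^2)$ is exactly how the formula is derived, and your bounds in parts (2) and (3) are right.

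One small clarification on what you flag as the ``main obstacle'': it is in fact a non-issue. When $t$ lies inside the support interval, the explicit formula for $m$ already has denominator exactly $\sqrt{k^2+(\eta-kt)^2+l^2}$, so \eqref{lem4.1.11} holds with constant $1$ (the numerator being at least $\sqrt{k^2+l^2}$). When $t$ lies past the right endpoint $\tfrac{\eta}{k}+1000\nu^{-1/3}$, one has $|\eta-kt|=|k|\,|t-\tfrac{\eta}{k}|>1000|k|\nu^{-1/3}$, so the frozen denominator $\sqrt{k^2+(1000k\nu^{-1/3})^2+l^2}$ is \emph{smaller} than the current $\sqrt{k^2+(\eta-kt)^2+l^2}$, and again \eqref{lem4.1.11} holds with constant $1$. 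No constant-loss comparison is needed.
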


The following lemma embodies some properties of the ghost multiplier $M(t, k, \eta, l)$.
\begin{lem}\label{lem4.2}
	$(1)$ The multiplier $M$ have positive upper and lower bounds:
	\begin{equation}\label{4.13}
		0<c<M(t, k, \eta, l) \leqslant 1
	\end{equation}
	for a universal constant $c$ that does not depend on $\nu$ and frequency. 
	
	$(2)$ For $k \neq 0$, we have
	\begin{equation}\label{4.15}
		1 \lesssim \nu^{-\frac{1}{6}} \sqrt{-\dot{M} M}\left(k, \eta, l\right)+\nu^{\frac{1}{3}}\big|k, \eta- kt, l \big|.
	\end{equation}

\end{lem}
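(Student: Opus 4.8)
\textbf{Plan for the proof of Lemma \ref{lem4.2}.}
The proof splits into the two stated claims, each of which is a direct consequence of the ODE definition \eqref{4.5} of $M$. For part $(1)$, I would first note that since $\frac{\dot M}{M}\leqslant 0$ for all $t$, the function $M$ is non-increasing; together with the initial condition $M(0,k,\eta,l)=1$ this immediately gives the upper bound $M\leqslant 1$. For the lower bound, integrate \eqref{4.5} in time: since when $k\neq 0$
\begin{equation*}
\log M(t,k,\eta,l)=-\int_0^t \frac{\nu^{1/3}}{\left[\nu^{1/3}\bigl(s-\tfrac{\eta}{k}\bigr)\right]^2+1}\,ds,
\end{equation*}
I would substitute $r=\nu^{1/3}\bigl(s-\tfrac{\eta}{k}\bigr)$ so that $ds=\nu^{-1/3}\,dr$ and the integral becomes $\int \frac{dr}{r^2+1}$, which is bounded above by $\int_{-\infty}^{\infty}\frac{dr}{r^2+1}=\pi$ uniformly in $t$, $k$, $\eta$, $l$ and $\nu$. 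Hence $M(t,k,\eta,l)\geqslant e^{-\pi}=:c>0$, a universal constant independent of $\nu$ and the frequency, as claimed; the case $k=0$ is trivial since $M\equiv 1$ there.

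For part $(2)$, the point is that $-\dot M M = \bigl(-\tfrac{\dot M}{M}\bigr)M^2$, and using $M^2\gtrsim c^2$ from part $(1)$ together with the explicit formula \eqref{4.5} gives
\begin{equation*}
-\dot M M \gtrsim \frac{\nu^{1/3}}{\left[\nu^{1/3}\bigl(t-\tfrac{\eta}{k}\bigr)\right]^2+1},
\qquad\text{so}\qquad
\nu^{-1/6}\sqrt{-\dot M M}\gtrsim \frac{1}{\sqrt{\left[\nu^{1/3}\bigl(t-\tfrac{\eta}{k}\bigr)\right]^2+1}}.
\end{equation*}
So I must show $\frac{1}{\sqrt{[\nu^{1/3}(t-\eta/k)]^2+1}}+\nu^{1/3}|k,\eta-kt,l|\gtrsim 1$. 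I would argue by cases on the size of $\nu^{1/3}\bigl|t-\tfrac{\eta}{k}\bigr|$: if $\nu^{1/3}\bigl|t-\tfrac{\eta}{k}\bigr|\leqslant 1$, the first term alone is $\gtrsim 1$; if $\nu^{1/3}\bigl|t-\tfrac{\eta}{k}\bigr|>1$, then since $k\neq 0$ we have $|k|\geqslant 1$, so $|k,\eta-kt,l|\geqslant |\eta-kt|=|k|\bigl|t-\tfrac{\eta}{k}\bigr|\geqslant \bigl|t-\tfrac{\eta}{k}\bigr|$, whence $\nu^{1/3}|k,\eta-kt,l|\geqslant \nu^{1/3}\bigl|t-\tfrac{\eta}{k}\bigr|>1$, and the second term alone suffices.

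The only mildly delicate point is keeping all implicit constants genuinely independent of $\nu$, $\beta$, and the frequencies $(k,\eta,l)$ — in particular making sure the substitution in part $(1)$ produces the $\nu$-independent bound $\pi$, and that in part $(2)$ the lower bound $M^2\gtrsim c^2$ from part $(1)$ is invoked correctly rather than circularly. No single step is a real obstacle; this is essentially a careful unwinding of the definition \eqref{4.5} combined with the elementary case analysis above.
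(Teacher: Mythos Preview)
Your proposal is correct and follows essentially the same approach as the paper: both use the monotonicity $\dot M/M\leqslant 0$ for the upper bound, the explicit integral of \eqref{4.5} (evaluating to a difference of arctangents bounded by $\pi$) for the lower bound $M\geqslant e^{-\pi}$, and the same case split on $\nu^{1/3}|t-\eta/k|$ together with $|k,\eta-kt,l|\geqslant |t-\eta/k|$ for part $(2)$.
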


\begin{proof}
	First, we shall prove $(1)$. According to \eqref{4.5}, we have
	$\frac{\dot{M}}{M}\leqslant 0$ for any $k$, $l \in \mathbb{Z}$, $\eta \in \mathbb{R}$.
	\begin{equation*}
		\ln M (t, k, \eta, l) \leqslant \ln M (t=0, k, \eta, l)=0.
	\end{equation*}
	For any $t\geqslant 0,$	we can easily obtain the upper bound of $M$
	\begin{equation*}
		M (t, k, \eta, l) \leqslant 1. 
	\end{equation*}
	If $k=0$, then $M(t, k=0, \eta, l)=1$. For $k \neq 0$, by \eqref{4.5}, we have
	\begin{align*}
		\ln M (t, k, \eta, l)& =\int_{0}^{t} \frac{-\nu^{\frac{1}{3}} }{\left[\nu^{\frac{1}{3}} \big( \tau-\frac{\eta}{k} \big)\right]^{2}+1} d\tau   \\
		&=- \nu^{\frac{1}{3}} \int_{-\frac{\eta}{k}}^{t-\frac{\eta}{k}} \frac{1} {\left[\nu^{\frac{1}{3}} s\right]^{2}+1} ds \\
		&=-\int_{-\nu^{\frac{1}{3}}\frac{ \eta}{k}}^{\nu^{\frac{1}{3}} \left(t-\frac{\eta}{k}\right)} \frac{1}{r^{2}+1} dr \\
		&=\arctan\left( -\nu^{\frac{1}{3}}\frac{ \eta}{k} \right)- \arctan \left(\nu^{\frac{1}{3}} \left(t-\frac{\eta}{k}\right)\right).
	\end{align*}
	Note that $\arctan r \in [-\frac{\pi}{2}, \frac{\pi}{2}]$. Hence, 
	\begin{equation*}
		M (t, k, \eta, l)=\exp \left[ \arctan\left( -\nu^{\frac{1}{3}}\frac{ \eta}{k} \right)- \arctan \left(\nu^{\frac{1}{3}} \left( t-\frac{\eta}{k}\right)\right) \right] \in [e^{-\pi}, 1].
	\end{equation*}
	
	Next, we give the proof of (2). For $k\neq 0$, we have $|k, \eta- kt, l|\geqslant | t-\frac{\eta}{k}|$.  If $\big|t-\frac{\eta}{k} \big| \geqslant \nu^{-\frac{1}{3}}$, then   (\ref{4.15})  immediately holds. Otherwise, if $\big| t-\frac{\eta}{k} \big| \leqslant \nu^{-\frac{1}{3}}$, then we have
	\begin{equation*}
		\sqrt{-\dot{M} M}=\sqrt{-\frac{\dot{M}}{M} (M)^2} \gtrsim \sqrt{-\frac{\dot{M}}{M}}=\sqrt{\frac{ \nu^{\frac{1}{3}} }{\left[\nu^{\frac{1}{3}} \big| t-\frac{\eta}{k} \big|\right]^{2}+1}} \geqslant \frac{\nu^{\frac{1}{6}}}{\sqrt{2}}.
	\end{equation*}
\end{proof}
Using Lemma \ref{lem4.2}, we directly give the following inference.
\begin{cor}\label{cor4.1}
	For any $g$ and $s \geqslant 0$, the following inequality holds
	\begin{equation}\label{4.17}
		\left\|  g_{\neq}  \right\|_{L^2 H^{s}} \lesssim \nu^{-\frac{1}{6}} \left( \left\|  \sqrt{-\dot{M} M} g_{\neq} \right\|_{L^2 H^{s}}+ \nu^{\frac{1}{2}} \left\|  \nabla_{L} g_{\neq} \right\|_{L^2 H^{s}}\right),
	\end{equation}
	which will be used frequently below.
\end{cor}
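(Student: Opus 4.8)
The plan is to obtain \eqref{4.17} directly from the pointwise-in-frequency bound \eqref{4.15} of Lemma \ref{lem4.2} via Plancherel's theorem, with no further estimate needed. Since $g_{\neq}$ is supported on nonzero $x$-frequencies, for every $t\geqslant 0$ and every $(k,\eta,l)$ with $k\neq 0$ the estimate \eqref{4.15} applies, and after factoring out $\nu^{-\frac16}$ (using $\nu^{-\frac16}\cdot\nu^{\frac12}=\nu^{\frac13}$) it reads
\begin{equation*}
1 \lesssim \nu^{-\frac16}\Big(\sqrt{-\dot{M}M}(t,k,\eta,l)+\nu^{\frac12}\,|k,\eta-kt,l|\Big).
\end{equation*}

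First I would multiply this inequality by $\langle k,\eta,l\rangle^{s}\big|\widehat{g_{\neq}}(t,k,\eta,l)\big|$, square, and use $(a+b)^2\lesssim a^2+b^2$ to arrive at
\begin{equation*}
\langle k,\eta,l\rangle^{2s}\big|\widehat{g_{\neq}}\big|^2 \lesssim \nu^{-\frac13}\Big((-\dot{M}M)\,\langle k,\eta,l\rangle^{2s}\big|\widehat{g_{\neq}}\big|^2+\nu\,|k,\eta-kt,l|^2\,\langle k,\eta,l\rangle^{2s}\big|\widehat{g_{\neq}}\big|^2\Big).
\end{equation*}
Summing over $k\in\mathbb{Z}\setminus\{0\}$ and $l\in\mathbb{Z}$, integrating in $\eta$, and invoking Plancherel's identity together with the identification $|k,\eta-kt,l|=|\widehat{\nabla_{L}}|$ from \eqref{HSX444}, this gives, at each fixed $t$,
\begin{equation*}
\|g_{\neq}(t)\|_{H^s}^2 \lesssim \nu^{-\frac13}\Big(\big\|\sqrt{-\dot{M}M}\,g_{\neq}(t)\big\|_{H^s}^2+\nu\,\|\nabla_{L}g_{\neq}(t)\|_{H^s}^2\Big).
\end{equation*}
Integrating in time and using $\sqrt{a+b}\leqslant\sqrt{a}+\sqrt{b}$ then yields \eqref{4.17}.

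There is no genuine obstacle here; the only two points requiring care are that $M$ and the symbol $|k,\eta-kt,l|$ both depend on $t$, so \eqref{4.15} must be applied at each fixed time before the time integration is carried out, and that the restriction $k\neq 0$ — under which $-\dot{M}M>0$ so the left-hand side is controlled — is exactly the regime in which \eqref{4.15} is nontrivial; this restriction is automatic since $g_{\neq}$ carries no $k=0$ Fourier content. Thus Corollary \ref{cor4.1} is simply a repackaging of Lemma \ref{lem4.2}$(2)$.
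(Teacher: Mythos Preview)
Your proposal is correct and is essentially the argument the paper has in mind: the paper simply states that Corollary \ref{cor4.1} follows directly from Lemma \ref{lem4.2}, and your derivation via the pointwise bound \eqref{4.15}, Plancherel, and time integration is exactly that direct inference spelled out.
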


Now, we prove the linear enhanced dissipation and inviscid damping on $U_{\neq}$. To achieve this, we give attenuation estimates for intermediate variables.
\begin{pro}\label{pro3.1}
	Assume that $\nu > 0$ and $\beta \in \mathbb{R}$ with $|\beta| \geqslant 2$, then there holds
	\begin{align}
	\Big|m \widehat{Q_{\neq}^{2}} \Big|^2+\Big|m K_{\neq}^2 \Big|^2 \leqslant &  e^{-\frac{1}{12} \nu t^{3}} \left( \Big|\widehat{Q_{\mathrm{in}}^{2}} \Big|^2+\Big|K_{\mathrm{in}}^2 \Big|^2\right), \label{3.24}\\
	\Big| \widehat{U_{\neq}^2} \Big|^2 \lesssim & e^{-\frac{1}{12} \nu t^{3}} p^{-1} \left( \Big|\widehat{Q_{\mathrm{in}}^{2}} \Big|^2+\Big|K_{\mathrm{in}}^2 \Big|^2\right), \label{3.25}\\
	\Big| \widehat{W_{\neq}^2} \Big|^2 \lesssim & e^{-\frac{1}{12} \nu t^{3}} \left( \Big|\widehat{Q_{\mathrm{in}}^{2}} \Big|^2+\Big|K_{\mathrm{in}}^2 \Big|^2\right) \label{3.26}.
	\end{align}
\end{pro}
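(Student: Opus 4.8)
\textbf{Plan of the proof of Proposition \ref{pro3.1}.}
The strategy is to derive from the linearized system \eqref{1.6} (equivalently \eqref{2.10}) a closed system in Fourier space for the non-zero frequency unknowns $\widehat{Q_{\neq}^2}$ and the good unknown $K_{\neq}^2$, and then run a weighted energy estimate with the multiplier $m$. First I would write out the equations satisfied by $Q_{\neq}^2 = \Delta u^2$ (restricted to $k\neq0$) and by $W_{\neq}^2 = \partial_z u^1 - \partial_x u^3$ in the moving frame. In Fourier variables these become a $2\times2$ linear ODE system in $t$ for $(\widehat{Q_{\neq}^2}, \widehat{W_{\neq}^2})$ coupled through the Coriolis coefficients $\beta\partial_Z$ and $(1-\beta)\partial_Z\Delta_L^{-1}$, together with the dissipation $\nu p(t,k,\eta,l)$ and, crucially, the stretching term coming from $\partial_t(\Delta_L^{-1})$ acting on $u^2$, i.e. the term $\partial_{XY}^L\Delta_L^{-1}$ that motivated the multiplier $m$ in \eqref{4...1}--\eqref{4...4}. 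The point of introducing $K_{\neq}^2$ (which should be, modulo the right time-dependent normalization, a rescaling of $\widehat{W_{\neq}^2}$ chosen so that the symmetric coupling $|l|/|k,\eta-kt,l|$ becomes antisymmetric and the stretching cancels) is exactly to turn this into a system whose energy $|m\widehat{Q_{\neq}^2}|^2 + |mK_{\neq}^2|^2$ has a favorable derivative.

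The main computation is the energy identity: differentiate $E(t) = |m\widehat{Q_{\neq}^2}|^2 + |mK_{\neq}^2|^2$ in time. The $\dot m/m$ terms are designed via \eqref{4...4} to exactly absorb the stretching term $\frac{k(\eta-kt)}{p}$ appearing in the $Q^2$ equation on the time window $t\in[\eta/k,\eta/k+1000\nu^{-1/3}]$, and off that window there is no stretching to worry about; the antisymmetry of the Coriolis coupling between $Q^2$ and $K^2$ kills the cross terms; what survives is $\frac{d}{dt}E \le -2\nu p(t,k,\eta,l)\, E$ (plus, on the bad window, a harmless error that is still dominated because there the multiplier has been inserted). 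Then I would integrate using the elementary bound $\int_0^t p(\tau,k,\eta,l)\,d\tau \ge \tfrac16 t^3$ for $k\neq0$ (since $p\ge(\eta-k\tau)^2$ and $\int_0^t(\eta-k\tau)^2\,d\tau$ is minimized over $\eta$ at $\eta=kt/2$, giving $k^2t^3/12\ge t^3/12$ when $|k|\ge1$; a short case analysis recovers the stated $\tfrac1{12}$ constant), yielding \eqref{3.24}. For \eqref{3.25} and \eqref{3.26} I would express $\widehat{U_{\neq}^2}$ and $\widehat{W_{\neq}^2}$ algebraically in terms of $\widehat{Q_{\neq}^2}$ and $K_{\neq}^2$: one has $\widehat{U_{\neq}^2} = -p^{-1}\widehat{Q_{\neq}^2}$ directly from $Q^2=\Delta u^2$, which together with \eqref{3.24} and the lower bound $m\ge\nu^{1/3}$ — or better, with the frequency bound \eqref{lem4.1.11} to avoid losing powers of $\nu$ — gives $|\widehat{U_{\neq}^2}|^2 \lesssim e^{-\frac1{12}\nu t^3}p^{-1}(|\widehat{Q_{\mathrm{in}}^2}|^2+|K_{\mathrm{in}}^2|^2)$; and $\widehat{W_{\neq}^2}$ is recovered from $K_{\neq}^2$ and $\widehat{Q_{\neq}^2}$ through the (bounded) transformation defining $K^2$, giving \eqref{3.26}.

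The step I expect to be the genuine obstacle is the \emph{correct definition of the good unknown $K^2$ and the verification that the stretching/coupling terms cancel}. In the classical Navier--Stokes case (Bedrossian--Germain--Masmoudi) the $u^2$ equation decouples, so a single multiplier $m$ suffices; here the Coriolis force forces one to carry $W^2$ (or $Q^3$) along, and the two equations have different stretching structure, so $K^2$ must be a precisely tuned, time- and frequency-dependent combination of $\widehat{W_{\neq}^2}$ (and possibly $\widehat{Q_{\neq}^2}$) — the analogue of the symmetrizing variables used in \cite{CWZ2024} — for which $\frac{d}{dt}(|m\widehat{Q_{\neq}^2}|^2+|mK_{\neq}^2|^2)$ has no indefinite terms beyond the dissipation. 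Getting the bookkeeping of these symbols right, and checking that the transformation $\widehat{W_{\neq}^2}\leftrightarrow K_{\neq}^2$ is bounded above and below uniformly in $t,\nu,\beta$ (which is where $|\beta|\ge2$ enters, keeping $\beta(\beta-1)$ bounded away from $0$ and the ratio $(1-\beta)/\beta$ controlled), is the crux; once that algebra is in place the energy estimate and the time integration are routine.
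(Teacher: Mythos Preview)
Your plan is correct and follows the paper's proof essentially line for line. To resolve the point you flag as the obstacle: the good unknown is simply the pure rescaling $K_{\neq}^2 = \tfrac{i\sqrt{\beta}}{\sqrt{\beta-1}}\,p^{1/2}\,\widehat{W_{\neq}^2}$ (no $\widehat{Q_{\neq}^2}$ component), and note that the original $(\widehat{Q_{\neq}^2},\widehat{W_{\neq}^2})$ system in fact carries \emph{no} stretching term --- the term $\tfrac{1}{2}\tfrac{\dot p}{p}\,K_{\neq}^2$ that $m$ is designed to absorb appears in the $K_{\neq}^2$ equation solely from differentiating the factor $p^{1/2}$ in this definition, after which the Coriolis coupling becomes antisymmetric with common symbol $\sqrt{\beta(\beta-1)}\,l\,p^{-1/2}$ and the energy identity closes exactly as you describe.
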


\begin{proof}
First, we apply $\Delta$ to the $u^2$ equation
\begin{equation}\label{3..1}
	\partial_{t} \Delta u^{2}-\nu \Delta \Delta u^{2}+y \partial_{x} \Delta u^{2}+\beta \partial_{x y} u^{2}+\beta\left(\partial_{x x}+\partial_{z z}\right) u^{1}=0.
\end{equation}
Inspired by \cite{CWZ2024}, we introduce the quantity $w^{2} \triangleq \partial_{z} u^{1}-\partial_{x} u^{3}$.  Combining with $u^{1}$ and $u^{3}$ equations, we obtain the following fact
\begin{equation}\label{3.118}
	\partial_{t} w^{2}-\nu \Delta w^{2}+y \partial_{x} w^{2}+(1-\beta) \partial_{z} u^{2}=0.
\end{equation}
Furthermore, due to the incompressible condition $\partial_{x} u^1+\partial_{z} u^3=-\partial_{y}u^2$ and the definition of $w^2$, we have
\begin{equation}\label{3.3}
	\left(\partial_{x x}+\partial_{z z}\right) u^{1}=\partial_{z} w^{2}-\partial_{x y} u^{2}.
\end{equation}
Let $q^2\triangleq \Delta u^2$. Combining \eqref{3..1} with  \eqref{3.118} gives
\begin{equation}\label{3.4}
	\begin{cases}
		\partial_{t} q^{2}+y \partial_x q^{2}-\nu \Delta q^{2}+\beta \partial_{z} w^{2}=0, \\ 
		\partial_{t} w^{2}+y \partial_x w^{2}-\nu \Delta w^{2}+(1-\beta) \partial_{z} \Delta^{-1} q^{2}=0.
	\end{cases}
\end{equation}

In the moving frame \eqref{2.5}, we denoting $Q^2(t, X, Y, Z)=q^2(t, x, y, z)$ and $W^2(t, X, Y, Z)=w^2(t, x, y, z)$. Thus,  by taking the Fourier transform of the spatial variable,  \eqref{3.4} can be written
\begin{equation}\label{3.28-1}
	\begin{cases}
		\partial_{t} \widehat{Q_{\neq}^{2}}+i \beta l \, \widehat{W_{\neq}^{2}}=-\nu p \, \widehat{Q_{\neq}^{2}}, \\
		\partial_{t} \widehat{W_{\neq}^{2}}-i(1-\beta)l p^{-1}  \widehat{Q_{\neq}^{2}}=-\nu p  \widehat{W_{\neq}^{2}}.
	\end{cases}
\end{equation}
Note that when $l=0$, it is easy to obtain the enhanced dissipation on $\widehat{Q^2_{\neq}}$ and $\widehat{W^2_{\neq}}$. Here we 
assume $l \neq 0$ and $|\beta| \geqslant 2$, we introduce the new unknowns    
\begin{equation}\label{3.13}
	K^2=\frac{i\sqrt{\beta}}{\sqrt{\beta-1}}p^{\frac{1}{2}}\widehat{W^{2}},   \quad \check{K}^2= -i\frac{\sqrt{\beta}}{\sqrt{\beta-1}}|\nabla_{L}|W^{2},
\end{equation}
then $\widehat{Q_{\neq}^2}$ and $K_{\neq}^2$ satisfy
\begin{equation}\label{3.7}
	\begin{cases}
		\partial_{t} \widehat{Q_{\neq}^2} + \sqrt{\beta (\beta-1)} \, l p^{-\frac{1}{2}}  K_{\neq}^2 = -\nu p \, \widehat{Q_{\neq}^2}, \\
		\partial_{t} K_{\neq}^2 - \dfrac{1}{2} \dfrac{\dot{p}}{p} K_{\neq}^2 - \sqrt{\beta (\beta-1)} \, l p^{-\frac{1}{2}}  \widehat{Q_{\neq}^2} = -\nu p K_{\neq}^2.
	\end{cases}
\end{equation}
By basic energy estimate, we give
\begin{equation}\label{3.8}
	\frac{1}{2} \frac{d}{dt} \left( \big|\widehat{Q_{\neq}^2} \big|^2+ \big|K_{\neq}^2 \big|^2\right) - \frac{1}{2} \frac{\dot{p}}{p} \big|K_{\neq}^2 \big|^2 +\nu p \left( \big|\widehat{Q_{\neq}^2}\big|^2 + \big|K_{\neq}^2\big|^2 \right) = 0,
\end{equation}
that is
\begin{align}\label{3.16}
	&\frac{1}{2} \frac{d}{dt} \left( \big|\widehat{Q_{\neq}^2} \big|^2+ \big|K_{\neq}^2 \big|^2\right) +\nu \left( k^2+\left(\eta-kt\right)^2+l^2 \right) \left( \big|\widehat{Q_{\neq}^2}\big|^2 + \big|K_{\neq}^2\big|^2 \right) \nonumber\\&\quad+ \frac{k\left(\eta-kt\right)}{k^2+\left(\eta-kt\right)^2+l^2} \big|K_{\neq}^2 \big|^2= 0.
\end{align}
Combining the properties of the multiplier $m$ with \eqref{3.16}, we further obtain
\begin{align}\label{HSX0000}
&\frac{1}{2} \frac{d}{dt}\left( \Big| m \widehat{Q_{\neq}^{2}} \Big|^{2}+ \Big|m K_{\neq}^{2} \Big|^{2} \right)+ \Big| \sqrt{-\dot{m}m } \,\widehat{Q_{\neq}^{2}} \Big|^2+ \Big| \sqrt{-\dot{m}m } \, K_{\neq}^{2}  \Big|^2 + \nu p\left(\Big|m \widehat{Q_{\neq}^{2}} \Big|^{2}+ \Big|m K_{\neq}^{2}\Big|^{2}\right)  \nonumber\\&\quad
=\frac{-k(\eta-kt)}{k^2+(\eta-kt)^2+l^2} \Big|m K_{\neq}^{2}\Big|^{2} \cdot 1_{\{t\in \mathbb{R}_+: \, 0<t<\frac{\eta}{k}\}} -\frac{\dot{m}}{m}\Big|m K_{\neq}^{2}\Big|^{2} \cdot 1_{\{t\in \mathbb{R}_+: \, \frac{\eta}{k}<t<\frac{\eta}{k}+1000 \nu^{-\frac{1}{3}}\}}  \nonumber\\&\qquad + \frac{-k(\eta-kt)}{k^2+(\eta-kt)^2+l^2} \Big|m K_{\neq}^{2}\Big|^{2} \cdot 1_{\{t\in \mathbb{R}_+: \, t>\frac{\eta}{k}+1000 \nu^{-\frac{1}{3}}\}}  \nonumber\\&\quad
\leqslant \frac{\nu}{2} p \Big|m K_{\neq}^{2}\Big|^{2}+ \Big| \sqrt{-\dot{m}m } \, K_{\neq}^{2}  \Big|^2,  
\end{align}
where we have used \eqref{4...3}. Indeed,   dissipation overcomes stretching if $ t>\frac{\eta}{k}+1000  \nu^{-\frac{1}{3}}$ and $\frac{1}{2}$ in the last term as in the above inequality can be chosen.   The two terms on the right-hand side of \eqref{HSX0000} are absorbed by the dissipative term,  which gives
\begin{equation}\label{3.28}
\frac{1}{2} \frac{d}{dt}\left( \Big| m \widehat{Q_{\neq}^{2}} \Big|^{2}+ \Big|m K_{\neq}^{2} \Big|^{2} \right)+ \frac{\nu}{2} p\left(\Big|m \widehat{Q_{\neq}^{2}} \Big|^{2}+ \Big|m K_{\neq}^{2}\Big|^{2}\right) \leqslant 0.
\end{equation}
Directly calculate gives
$$\int_{0}^{t}p(s)ds=(k^{2}+l^{2})t+\left((\eta-\frac{1}{2}kt)^{2}+\frac{1}{12}k^{2}t^{2}\right)t\geqslant\frac{1}{12} k^{2} t^{3}. $$ 
Thus, integrating \eqref{3.28} with respect to $t$ and  obtaining that
\begin{equation}\label{3.14}
\Big| m \widehat{Q_{\neq}^{2}} \Big|^{2}+ \Big|m K_{\neq}^{2} \Big|^{2} \leqslant e^{-\frac{1}{12} \nu k^2 t^3} \left( \Big| \widehat{Q_{\mathrm{in}}^{2}} \Big|^2+ \Big| K_{\mathrm{in}}^{2} \Big|^2\right).
\end{equation}

Finally, recalling the definition \eqref{3.13} of $K^2$ and $|\beta| \geqslant 2$, by \eqref{lem4.1.11} we have the following fact
\begin{align}\label{3..100}
p^{\frac{1}{2}}\Big| \widehat{U_{\neq}^2} \Big| \leqslant \frac{\sqrt{k^2+l^2}}{\sqrt{k^2+(\eta-kt)^2+l^2}} \left(k^2+(\eta-kt)^2+l^2 \right) \Big| \widehat{U_{\neq}^2} \Big| \lesssim \Big| m \widehat{\Delta_{L} U_{\neq}^{2}} \Big| =\Big| m \widehat{Q_{\neq}^{2}} \Big|, 
\end{align}
and 
\begin{equation}
\Big| \widehat{W_{\neq}^2} \Big| \leqslant \frac{\sqrt{k^2+l^2}}{\sqrt{k^2+(\eta-kt)^2+l^2}} \sqrt{k^2+(\eta-kt)^2+l^2}  \Big| \widehat{W_{\neq}^2} \Big| \lesssim \Big| m \frac{i \sqrt{\beta}}{\sqrt{\beta-1}} p^{\frac{1}{2}} \widehat{W_{\neq}^2} \Big|= \Big| m K_{\neq}^{2} \Big|.\nonumber
\end{equation}
These together with \eqref{3.14} lead to \eqref{3.24}--\eqref{3.26}.
\end{proof}
With the  Proposition \ref{pro3.1} at hand, we give the following result  about  the enhanced dissipation of the $L^2$-norm of $U_{\neq}$ and the inviscid damping effect of $U_{\neq}^2$,  which corresponds to \eqref{1.12} and \eqref{1.11} in  Theorem \ref{1.1.} when $s=0$. In fact, when $s>0$, the enhanced dissipation of $U_{\neq}$ can also be obtained by using a similar treatment. 
\begin{pro}\label{pro3.2}
	Assume that  $\nu > 0$  and  $\beta \in \mathbb{R}$ with $|\beta| \geqslant 2$, it holds
	\begin{align}
	\left\|  U^2_{\neq} (t) \right\|_{L^2} \lesssim  & {\left\langle {t} \right\rangle}^{-1} e^{-\frac{1}{24} \nu t^3} \left(\left\|  u^2_{\mathrm{in}}\right\|_{H^{3}} + \left\|  w^2_{\mathrm{in}}\right\|_{H^{2}} \right), \label{3.31}\\
	\left\| (U^1_{\neq}, U^3_{\neq}) (t) \right\|_{L^2} \lesssim & \,  e^{-\frac{1}{24} \nu t^3} \left(\left\|  u^2_{\mathrm{in}}\right\|_{H^{2}} + \left\|  w^2_{\mathrm{in}}\right\|_{H^{1}} \right).  \label{3.32}	  
	\end{align}
\end{pro}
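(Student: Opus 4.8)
The plan is to deduce Proposition~\ref{pro3.2} from the pointwise Fourier bounds in Proposition~\ref{pro3.1} by recovering the velocity field from the quantities $Q_{\neq}^2$ and $W_{\neq}^2$, and then integrating in frequency against the Sobolev weights. First I would recall the algebraic relations between $U_{\neq}$ and the introduced unknowns in the moving frame: since $Q^2=\Delta_L U^2$, we have $\widehat{U_{\neq}^2}=-p^{-1}\widehat{Q_{\neq}^2}$, and the pair of elliptic identities
\begin{equation*}
(\partial_{XX}+\partial_{ZZ})U^1=\partial_Z W^2-\partial_{XY}^L U^2,\qquad (\partial_{XX}+\partial_{ZZ})U^3=-\partial_{ZY}^L U^2-\partial_X W^2
\end{equation*}
give, in Fourier variables, $\widehat{U_{\neq}^1}$ and $\widehat{U_{\neq}^3}$ controlled by $(k^2+l^2)^{-1}$ times a combination of $|l||\widehat{W_{\neq}^2}|$, $|k||\eta-kt||\widehat{U_{\neq}^2}|$, and $|k||\widehat{W_{\neq}^2}|$. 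Bounding $(k^2+l^2)^{-1}|k,\eta-kt|\,|k|\le 1$ and similar, one gets $|\widehat{U_{\neq}^{1,3}}|\lesssim |\widehat{W_{\neq}^2}|+|\widehat{Q_{\neq}^2}|\cdot(\text{bounded factor})$; combined with \eqref{3.25}--\eqref{3.26} this yields $|\widehat{U_{\neq}^{1,3}}|^2\lesssim e^{-\frac{1}{12}\nu t^3}(|\widehat{Q^2_{\mathrm{in}}}|^2+|K^2_{\mathrm{in}}|^2)$.

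Next I would convert the pointwise estimates into $L^2$ bounds. Integrating \eqref{3.25} over $(k,\eta,l)$ and using $p=k^2+(\eta-kt)^2+l^2\geq 1$ on nonzero modes gives \eqref{3.32} directly, provided the initial data $|\widehat{Q^2_{\mathrm{in}}}|^2+|K^2_{\mathrm{in}}|^2$ is integrable with the stated weights. For that one uses $Q^2_{\mathrm{in}}=\Delta u^2_{\mathrm{in}}$, so $\|Q^2_{\mathrm{in}}\|_{L^2}\lesssim\|u^2_{\mathrm{in}}\|_{H^2}$, and from \eqref{3.13} that $K^2_{\mathrm{in}}=\tfrac{i\sqrt\beta}{\sqrt{\beta-1}}p(0)^{1/2}\widehat{W^2_{\mathrm{in}}}$, hence $\|K^2_{\mathrm{in}}\|_{L^2}\lesssim\|w^2_{\mathrm{in}}\|_{H^1}$ (here $|\beta|\ge 2$ keeps the constant $\tfrac{\sqrt\beta}{\sqrt{\beta-1}}$ bounded); for $U^{1,3}_{\neq}$ the same two quantities appear, so \eqref{3.32} follows. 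For the inviscid-damping estimate \eqref{3.31}, the extra factor $\langle t\rangle^{-1}$ must come from the weight $p^{-1}$ in \eqref{3.25}: I would split into $|k,\eta-kt,l|\gtrsim\langle t\rangle$ and its complement. On nonzero modes $k\neq 0$ the bulk of frequency space satisfies $|\eta-kt|\gtrsim\langle t\rangle$ (away from the critical time $\eta/k$), giving $p^{-1}\lesssim\langle t\rangle^{-2}$ there; on the remaining thin set $|\eta-kt|\lesssim\langle t\rangle$ one pays one extra derivative in $\eta$ (hence the $H^3$ rather than $H^2$ regularity requirement, and $H^2$ for $w^2_{\mathrm{in}}$) to integrate out the $\eta$-variable over an interval of length $\lesssim\langle t\rangle$, producing the loss of $\langle t\rangle^{1}$ against the gain $\langle t\rangle^{-2}$, i.e.\ the net $\langle t\rangle^{-1}$. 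I would carry the enhanced-dissipation exponential through this argument, sacrificing $e^{-\frac{1}{24}\nu t^3}$ from $e^{-\frac{1}{12}\nu t^3}$ to absorb any polynomial factors in $\nu t^3$ arising from the weight manipulation.

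The main obstacle I anticipate is the careful bookkeeping in the inviscid-damping step: making the frequency decomposition precise enough to extract exactly the factor $\langle t\rangle^{-1}$ while tracking which Sobolev norm of the data is consumed, and verifying that $K^2_{\mathrm{in}}$ (which carries a $p(0)^{1/2}=|\nabla|$ and is defined only for $l\neq 0$) is correctly bounded by $\|w^2_{\mathrm{in}}\|_{H^1}$ uniformly in $\beta$ with $|\beta|\ge 2$. The $l=0$ case must be handled separately as noted after \eqref{3.28-1}: there \eqref{3.28-1} decouples into two pure heat equations with dissipation $\nu p$, so enhanced dissipation $e^{-\frac{1}{12}\nu k^2t^3}$ holds trivially and the same recovery of $U_{\neq}$ applies, so the claimed bounds persist. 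Finally, passing from $s=0$ to general $s\ge 0$ (needed for \eqref{1.12}--\eqref{1.11}) is routine: the multipliers $m$ and $M$ commute with $\langle\nabla\rangle^s$ and the same energy identity holds with an extra weight $\langle k,\eta,l\rangle^{2s}$, costing two more derivatives on the data exactly as in the statement of Theorem~\ref{1.1.}.
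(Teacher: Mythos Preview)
Your overall strategy---use Proposition~\ref{pro3.1}, recover $U_{\neq}$ from $Q_{\neq}^2$ and $W_{\neq}^2$ via the elliptic identities \eqref{3.20}, then integrate in frequency---is exactly what the paper does, and your treatment of $U_{\neq}^{1,3}$ matches the paper's line by line.

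The inviscid-damping step for $U_{\neq}^2$, however, has a gap as you have written it. On your ``thin set'' $|\eta-kt|\lesssim\langle t\rangle$ there is \emph{no} gain of $\langle t\rangle^{-2}$ from $p^{-1}$: on that set one only has $p\ge k^2+l^2\ge 1$, so $p^{-1}\le 1$. Your proposed accounting ``loss $\langle t\rangle$ against gain $\langle t\rangle^{-2}$'' therefore does not go through; as stated you would end up with a \emph{growing} factor $\langle t\rangle$ from integrating over an interval of that length. The paper avoids any decomposition entirely by using the single pointwise inequality
\[
p^{-1}\lesssim \langle t\rangle^{-2}\,|k,\eta,l|^2,
\]
valid for $k\neq 0$ (since $|t|\le |kt|\le |\eta-kt|+|\eta|\le p^{1/2}+|k,\eta,l|$ and both $p,|k,\eta,l|\ge 1$). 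Inserting this directly into \eqref{3.25} and integrating over $(k,\eta,l)$ gives
\[
\|U_{\neq}^2(t)\|_{L^2}^2\lesssim \langle t\rangle^{-2}e^{-\frac{1}{12}\nu t^3}\big(\|u_{\mathrm{in}}^2\|_{H^3}^2+\|w_{\mathrm{in}}^2\|_{H^2}^2\big),
\]
where the extra weight $|k,\eta,l|^2$ is precisely what costs the additional derivative on each of $Q_{\mathrm{in}}^2$ and $K_{\mathrm{in}}^2$. No polynomial-in-$t$ factors arise, so there is no need to sacrifice part of the exponential; the $e^{-\frac{1}{24}\nu t^3}$ in the statement is simply the square root of $e^{-\frac{1}{12}\nu t^3}$.
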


\begin{proof}

Let $U^2(t, X, Y, Z)=u^2(t, x, y, z)$.  Due to the fact $p^{-1} \lesssim {\left\langle {t} \right\rangle}^{-2} |k, \eta, l|^2$ and Proposition \ref{pro3.1}, we have
\begin{align}\label{3.37}
\left\|u_{\neq}^{2}(t)\right\|_{L^{2}}^{2}=\left\|U_{\neq}^{2}(t)\right\|_{L^{2}}^{2}& \lesssim  \sum_{(k, l) \in \mathbb{Z}^2}  e^{-\frac{1}{12} \nu t^{3}} \int_{\mathbb{R}} p^{-1}\left(\Big|\widehat{Q_{\mathrm{in}}^{2}}\Big|^{2}+\Big|K_{\mathrm{in}}^{2}\Big|^{2}\right) d \eta \nonumber \\
&\lesssim \sum_{(k, l) \in \mathbb{Z}^2}  \frac{e^{-\frac{1}{12} \nu t^{3}}}{{\left\langle {t} \right\rangle}^2} \int_{\mathbb{R}} |k, \eta, l|^2 \left(|k, \eta, l|^4 \Big|\widehat{U_{\mathrm{in}}^{2}}\Big|^{2}+|k, \eta, l|^2 \Big|\widehat{W_{\mathrm{in}}^{2}}\Big|^{2}\right) d \eta  \nonumber\\
& \lesssim \frac{e^{-\frac{1}{12} \nu t^{3}}}{{\left\langle {t} \right\rangle}^2} \left( \left\| u_{\mathrm{in}}^2 \right\|_{H^{3}}^2+ \left\| w_{\mathrm{in}}^2 \right\|_{H^{2}}^2\right).
\end{align}

Let $U^1(t, X, Y, Z)=u^1(t, x, y, z)$ and $U^3(t, X, Y, Z)=u^3(t, x, y, z)$. Using the incompressible condition $\partial_{x} u^1+\partial_{z} u^3=-\partial_{y}u^2$ and the definition of $w^2$, one gets
\begin{equation}\label{3.20}
\begin{cases}
\left(\partial_{X X}+\partial_{Z Z}\right) U^{1}=\partial_{Z} W^{2}-\partial_{X Y}^{L} U^{2},  \\
\left(\partial_{X X}+\partial_{Z Z}\right) U^{3}=-\partial_{Z Y}^{L} U^{2}-\partial_{X} W^{2}.
\end{cases}
\end{equation}
By \eqref{3.20}$_{1}$, we get $\widehat{U_{\neq}^1}$ which satisfies
$$\widehat{U_{\neq}^1}=-i|k, l|^{-2} \, l \, \widehat{W_{\neq}^{2}}-|k, l|^{-2} k (\eta-kt) \widehat{U_{\neq}^2},
$$
this together with \eqref{3.25}--\eqref{3.26} leads to $L^2$-norm of $u_{\neq}^1(t)$
\begin{align}\label{3.21}
\left\| u_{\neq}^1(t) \right\|_{L^2}^2=\left\| U_{\neq}^1(t) \right\|_{L^2}^2 &\leqslant  \sum_{(k, l) \in \mathbb{Z}^2} \left(\int_{\mathbb{R}} |k, l|^{-4} |l|^{2} \Big|\widehat{W_{\neq}^2}(t)\Big|^{2} d\eta+\int_{\mathbb{R}}|k, l|^{-4} |k|^{2} |\eta-kt|^2 \Big|\widehat{U_{\neq}^2} (t)\Big|^{2} d\eta\right) \nonumber \\
&\lesssim \sum_{(k, l) \in \mathbb{Z}^2} \int_{\mathbb{R}} \left(  \Big|\widehat{W_{\neq}^2}(t)\Big|^{2} + e^{-\frac{1}{12} \nu t^{3}}   |k, l|^{-4} |k|^{2} |\eta-kt|^2 p^{-1}\left(\Big|\widehat{Q_{\mathrm{in}}^{2}} \Big|^2+\Big|K_{\mathrm{in}}^2 \Big|^2\right) \right) d\eta \nonumber \\
&\lesssim \sum_{(k, l) \in \mathbb{Z}^2} e^{-\frac{1}{12} \nu t^{3}} \int_{\mathbb{R}}  \left(\Big|\widehat{Q_{\mathrm{in}}^{2}} \Big|^2+\Big|K_{\mathrm{in}}^2 \Big|^2 \right) d \eta \nonumber\\
& \lesssim \sum_{(k, l) \in \mathbb{Z}^2} e^{-\frac{1}{12} \nu t^{3}} \int_{\mathbb{R}} \left(|k, \eta, l|^4 \Big|\widehat{U_{\mathrm{in}}^{2}}\Big|^{2}+|k, \eta, l|^2 \Big|\widehat{W_{\mathrm{in}}^{2}}\Big|^{2} \right) d \eta \nonumber\\
& \lesssim e^{-\frac{1}{12} \nu t^{3}} \left( \left\| u_{\mathrm{in}}^2 \right\|_{H^2}^2+ \left\| w_{\mathrm{in}}^2 \right\|_{H^1}^2\right).
\end{align}

Finally, to estimate the $L^2$-norm of $u_{\neq}^3$, we use \eqref{3.20}$_{2}$ to obtain
\begin{align}
\widehat{U_{\neq}^3} & =-|k, l|^{-2} \, l  \, (\eta-k t) \widehat{U_{\neq}^2}+i k |k, l|^{-2} \widehat{W_{\neq}^2}.
\end{align}
Similar to \eqref{3.21}, one immediately gets
\begin{align}\label{3.41}
\left\| u_{\neq}^3(t) \right\|_{L^2}^2=\left\| U_{\neq}^3(t) \right\|_{L^2}^2 &\leqslant  \sum_{(k, l) \in \mathbb{Z}^2} \left(\int_{\mathbb{R}} |k, l|^{-4} |l|^{2}  |\eta-kt|^2 \Big|\widehat{U_{\neq}^2} (t)\Big|^{2} d\eta+ \int_{\mathbb{R}}|k, l|^{-4} |k|^{2} \Big|\widehat{W_{\neq}^2}(t)\Big|^{2} d\eta \right) \nonumber \\
&\lesssim \sum_{(k, l) \in \mathbb{Z}^2} \int_{\mathbb{R}}\left(   e^{-\frac{1}{12} \nu t^{3}}   |k, l|^{-4} |l|^{2} |\eta-kt|^2 p^{-1}\left(\Big|\widehat{Q_{\mathrm{in}}^{2}} \Big|^2+\Big|K_{\mathrm{in}}^2 \Big|^2\right)+\Big|\widehat{W_{\neq}^2}(t)\Big|^{2} \right)d\eta \nonumber \\
& \lesssim e^{-\frac{1}{12} \nu t^{3}} \left( \left\| u_{\mathrm{in}}^2 \right\|_{H^2}^2+ \left\| w_{\mathrm{in}}^2 \right\|_{H^1}^2\right).
\end{align}
Hence, together \eqref{3.37}, \eqref{3.21} with \eqref{3.41} gives \eqref{3.31} and \eqref{3.32} and the proof of Proposition \ref{pro3.2} is finished.  
\end{proof}


\subsection{Cancellation of lift-up effect on $u_{0}$}\label{section3.2}
Recall the lift-up effect for the 3D homogeneous Navier-Stokes equations. Setting $k=0$ and integrating the velocity equations in $x$ yields the system 
\begin{equation}\label{HSX1}
\partial_{t}u_{0}+\begin{pmatrix}
u_{0}^{2}\\
0\\
0
\end{pmatrix}
=\nu \Delta u_{0}.
\end{equation}
The explicit solution of (\ref{HSX1}) can be written as
\begin{align}\label{1.14}
\begin{cases}
u_{0}^1(t, y, z)=e^{\nu \Delta_{y, z}t}{\left( u_{0 \mathrm{in}}^1-t u_{0 \mathrm{in}}^2 \right)},\\
u_{0}^2(t, y, z)=e^{\nu \Delta_{y, z}t}{u_{0 \mathrm{in}}^2},\\
u_{0}^3(t, y, z)=e^{\nu \Delta_{y, z}t}{u_{0 \mathrm{in}}^3}.
\end{cases}
\end{align}
The linear growth  of $u_{0}^{1}$ predicted by the formula above for $t\lesssim \nu^{-1}$  is known as the lift-up effect. In general, the best global in time estimate for the Navier-Stokes equations that one can expect is 
\begin{equation*}
\|u_{0}^{1}\|_{L^{\infty}H^{s}}^{2}+\nu\|\nabla u_{0}^{1}\|_{L^{2}H^{s}}^{2}\lesssim\left(\varepsilon \nu^{-1}\right)^{2}.
\end{equation*}

In this section, we would like to emphasize that the rotation effect can inhibit the lift-up effect in (NSC) near the Couette flow.

Specifically, the zero frequency of the velocity field $u_0(t, y, z)=\overline{u}_0(t, y)+\widetilde{u}_{0}(t, y, z)$ satisfies the following system
\begin{equation}\label{1.7}
\begin{cases}
\partial_{t} u_0^1-\nu \Delta_{y, z} u_0^1+
(1-\beta)u_0^2=0, \\
\partial_{t} u_0^2-\nu \Delta_{y, z} u_0^2+
\beta u_0^1-\beta \partial_{y} \Delta_{y, z}^{-1} \partial_{y} u_0^1=0, \\
\partial_{t} u_0^3-\nu \Delta_{y, z} u_0^3-\beta \partial_{z} \Delta_{y, z}^{-1} \partial_{y} u_0^1=0, \\
u(t=0)=u_{\mathrm{in}},
\end{cases}
\end{equation}

For the linearized system \eqref{1.7}, we express the solution in an explicit form via Fourier transform and Duhamel?s principle. One gives the following proposition.
\begin{pro}\label{pro3.3}
	$(1)$ Assume that $l=0$, then \eqref{1.7} degenerates into the following heat equations 
	\begin{equation}\label{3.2.2}
	\begin{cases}
	\partial_{t} \overline{u}_0^1-\nu \partial_{y}^2 \overline{u}_0^1=0, \\
	\partial_{t} \overline{u}_0^3-\nu \partial_{y}^2 \overline{u}_0^3=0, \\
	\overline{u}_{0}^{2}=0,\\
	\overline{u}(t=0)=\overline{u}_{\mathrm{in}}.
	\end{cases}
	\end{equation}
	In this case, the double zero frequency $\overline{u}_0$  of \eqref{3.2.2} is immediately obtained, which is \eqref{3.1.5} as in Theorem \ref{1.1.}.
	
	$(2)$ Assume that $l \neq 0$, the simple zero frequency $\widetilde{u}_{0}$ of system \eqref{1.7} can be expressed as the following form
	\begin{equation}\label{3.2.4}
	\begin{cases}
	\widehat{\widetilde{u}_{0}^{1}}(t)=e^{-\nu(\eta^2+l^2) t} \cos (h t) \, \widehat{\widetilde{u}_{0 \mathrm{in}}^{1}}+\frac{(\beta-1)}{h} e^{-\nu(\eta^2+l^2) t} \sin (ht) \, \widehat{\widetilde{u}_{0 \mathrm{in}}^{2}}, \\
	\widehat{\widetilde{u}_{0}^{2}}(t)=e^{-\nu(\eta^2+l^2) t} \cos (h t) \, \widehat{\widetilde{u}_{0 \mathrm{in}}^{2}}-\frac{h}{(\beta-1)} e^{-\nu(\eta^2+l^2) t} \sin (h t)  \, \widehat{\widetilde{u}_{0 \mathrm{in}}^{1}}, \\
	\widehat{\widetilde{u}_{0}^{3}}(t)=e^{-\nu\left(\eta^{2}+l^{2} \right)t} \widehat{\widetilde{u}_{0 \mathrm{in}}^{3}}+e^{-\nu\left(\eta^{2}+l^2 \right) t} \sqrt{\frac{\beta}{(\beta-1)}} \, \frac{|\eta|}{|\eta, l|}  \, \sin (h t)  \,\widehat{\widetilde{u}_{0 \mathrm{in}}^{1}} \\
	\qquad\quad-e^{-\nu\left(\eta^{2}+l^2 \right) t} \, \frac{\eta}{l} \, (\cos(ht)-1)  \, \widehat{\widetilde{u}_{0 \mathrm{in}}^{2}},
	\end{cases}
	\end{equation}
	where $h(\beta, \eta, l)\overset{\Delta}{=}\sqrt{\beta (\beta-1)} \frac{|l|}{|\eta, l|}$.
\end{pro}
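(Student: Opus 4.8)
The plan is to diagonalize the zero-frequency system \eqref{1.7} frequency-by-frequency in Fourier space and then solve the resulting constant-coefficient linear ODE in $t$ explicitly. First, for $k=0$ the transport term $y\partial_x u$ drops out, so passing to Fourier variables $(\eta,l)$ in $(y,z)$ turns \eqref{1.7} into a linear ODE system for $(\widehat{u_0^1},\widehat{u_0^2},\widehat{u_0^3})(t,\eta,l)$ with the dissipation acting as the scalar factor $-\nu(\eta^2+l^2)$ times the identity. The case $l=0$ is immediate: the coupling terms $\beta\partial_y\Delta_{y,z}^{-1}\partial_y u_0^1$ and $\beta\partial_z\Delta_{y,z}^{-1}\partial_y u_0^1$ become $\beta\widehat{u_0^1}$ and $0$ respectively on the Fourier side, so the $\widehat{u_0^2}$ equation reads $\partial_t\widehat{u_0^2}+\nu\eta^2\widehat{u_0^2}+\beta\widehat{u_0^1}-\beta\widehat{u_0^1}=0$, i.e.\ pure heat flow; but the divergence-free condition at $k=l=0$ forces $\partial_y\overline{u}_0^2=0$, hence $\overline{u}_0^2\equiv 0$ (it has zero mean and is $y$-independent), and then the $\overline{u}_0^1$, $\overline{u}_0^3$ equations decouple into scalar heat equations in $y$. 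This yields \eqref{3.2.2} and, via the heat semigroup, \eqref{3.1.5}.

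For the substantive case $l\neq 0$, I would substitute the integrating factor $e^{\nu(\eta^2+l^2)t}$, i.e.\ write $\widehat{\widetilde u_0^j}(t)=e^{-\nu(\eta^2+l^2)t}\,a_j(t)$, so that the $a_j$ solve the conservative (dissipation-free) system. On the Fourier side the operator $\partial_y\Delta_{y,z}^{-1}\partial_y$ has symbol $\tfrac{\eta^2}{\eta^2+l^2}$, so the $\widehat{u_0^1}$--$\widehat{u_0^2}$ block becomes
\begin{equation*}
\frac{d}{dt}\begin{pmatrix} a_1 \\ a_2\end{pmatrix}
= \begin{pmatrix} 0 & -(1-\beta) \\ -\beta\left(1-\frac{\eta^2}{\eta^2+l^2}\right) & 0 \end{pmatrix}\begin{pmatrix} a_1 \\ a_2\end{pmatrix}
= \begin{pmatrix} 0 & \beta-1 \\ -\beta\frac{l^2}{\eta^2+l^2} & 0\end{pmatrix}\begin{pmatrix} a_1 \\ a_2\end{pmatrix}.
\end{equation*}
This $2\times 2$ matrix has trace $0$ and determinant $\beta(\beta-1)\frac{l^2}{\eta^2+l^2}=h^2$, hence eigenvalues $\pm i h$ with $h=\sqrt{\beta(\beta-1)}\frac{|l|}{|\eta,l|}$ (note $\beta(\beta-1)>0$ since $|\beta|\ge 2$, so $h$ is real and the block is a genuine rotation/oscillator). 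Solving the harmonic-oscillator system with initial data $(a_1,a_2)(0)=(\widehat{\widetilde u_{0\mathrm{in}}^1},\widehat{\widetilde u_{0\mathrm{in}}^2})$ gives $a_1(t)=\cos(ht)\,\widehat{\widetilde u_{0\mathrm{in}}^1}+\frac{\beta-1}{h}\sin(ht)\,\widehat{\widetilde u_{0\mathrm{in}}^2}$ and $a_2(t)=\cos(ht)\,\widehat{\widetilde u_{0\mathrm{in}}^2}-\frac{h}{\beta-1}\sin(ht)\,\widehat{\widetilde u_{0\mathrm{in}}^1}$ (one checks $\dot a_1=(\beta-1)a_2$ and $\dot a_2 = -\frac{h^2}{\beta-1}a_1$, consistent with the matrix above). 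Multiplying back by $e^{-\nu(\eta^2+l^2)t}$ yields the first two lines of \eqref{3.2.4}.

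For $\widehat{\widetilde u_0^3}$, the equation $\partial_t\widehat{u_0^3}+\nu(\eta^2+l^2)\widehat{u_0^3}=\beta\,\tfrac{i l}{\eta^2+l^2}(i\eta)\,\widehat{u_0^1}= -\beta\frac{\eta l}{\eta^2+l^2}\widehat{u_0^1}$ is scalar and inhomogeneous; using the same integrating factor I would apply Duhamel against the already-known $a_1(t)$, producing $\int_0^t\!\big(-\beta\frac{\eta l}{\eta^2+l^2}\big)a_1(\tau)\,d\tau$, and then evaluate the elementary integrals $\int_0^t\cos(h\tau)d\tau=\frac{\sin(ht)}{h}$ and $\int_0^t\sin(h\tau)d\tau=\frac{1-\cos(ht)}{h}$. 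Collecting terms and using $\beta\frac{\eta l}{h(\eta^2+l^2)}=\sqrt{\tfrac{\beta}{\beta-1}}\,\mathrm{sgn}(l)\frac{\eta}{|\eta,l|}$ and $\frac{\beta-1}{h}\cdot\frac{\eta l}{\eta^2+l^2}\cdot\frac{1}{h}=\frac{\eta}{l}$ (after simplifying $h^2=\beta(\beta-1)\frac{l^2}{\eta^2+l^2}$) reproduces the third line of \eqref{3.2.4}; alternatively one can bypass Duhamel entirely by noting the incompressibility relation $l\,\widehat{\widetilde u_0^3}=-\eta\,\widehat{\widetilde u_0^2}$ at $k=0$, which directly gives $\widehat{\widetilde u_0^3}=-\frac{\eta}{l}\widehat{\widetilde u_0^2}$ — I would double-check that this matches the stated formula (it forces the $\widehat{\widetilde u_{0\mathrm{in}}^3}$ term to vanish unless $\widehat{\widetilde u_{0\mathrm{in}}^3}=-\frac{\eta}{l}\widehat{\widetilde u_{0\mathrm{in}}^2}$, i.e.\ the stated formula is consistent only on the divergence-free manifold, which is exactly the hypothesis). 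The only real subtlety — and the step I expect to demand the most care — is bookkeeping the constants $\sqrt{\beta/(\beta-1)}$, the sign $\mathrm{sgn}(l)$ hidden in $|l|$ versus $l$, and reconciling the Duhamel-derived expression for $\widehat{\widetilde u_0^3}$ with the incompressibility identity; the ODE theory itself is routine once the $2\times 2$ block is identified as a damped harmonic oscillator with frequency $h$.
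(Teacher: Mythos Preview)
Your proposal is correct and follows essentially the same approach as the paper: pass to Fourier, identify the $2\times 2$ block in $(\widehat{\widetilde u_0^1},\widehat{\widetilde u_0^2})$ as a damped oscillator with eigenvalues $-\nu|\eta,l|^2\pm ih$, solve explicitly, and then obtain $\widehat{\widetilde u_0^3}$ via Duhamel (the paper also remarks, as you do, that incompressibility gives $\widetilde u_0^3=-\partial_z^{-1}\partial_y\widetilde u_0^2$ directly). One small sign slip to fix: the symbol of $\partial_z\Delta_{y,z}^{-1}\partial_y$ is $(il)\cdot(-1/(\eta^2+l^2))\cdot(i\eta)=+\tfrac{\eta l}{\eta^2+l^2}$, so the forcing in the $\widehat{\widetilde u_0^3}$ equation is $+\beta\tfrac{\eta l}{\eta^2+l^2}\widehat{\widetilde u_0^1}$, not minus; this is exactly the bookkeeping you flagged and it does not affect the structure of the argument.
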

\begin{rem}
In fact, $u_{0}=\overline{u}_{0}+\widetilde{u}_{0}$ is the solution of the linearized problem \eqref{1.7}, where $\overline{u}_{0}$ and $\widetilde{u}_{0}$ are given by \eqref {3.2.2} and \eqref{3.2.4}, respectively. Compared with the three-dimensional classical Navier-Stokes equations, (NSC) has no lift-up effect. This indicates that the rotation promotes the stability of fluid near Couette flow.

\end{rem}
\begin{proof}
	Here, we only need to give a proof of the case when $l\neq 0$. Based on the observation of \eqref{1.7}, we find that both $\widetilde{u}_{0}^{1}$ and $\widetilde{u}_{0}^{2}$ equations are decoupled from $\widetilde{u}_{0}^{3}$. To simplify the calculation, we first take the Fourier transform to the $\widetilde{u}_{0}^{1}$ and $\widetilde{u}_{0}^{2}$ equations in \eqref{1.7}, which yields
	\begin{equation}
	\partial_{t} \begin{bmatrix}
	\widehat{\widetilde{u}_{0}^{1}} \\
	\widehat{\widetilde{u}_{0}^{2}} 
	\end{bmatrix}= \mathcal{A}\begin{bmatrix}
	\widehat{\widetilde{u}_{0}^{1}} \\
	\widehat{\widetilde{u}_{0}^{2}} 
	\end{bmatrix},\nonumber
	\end{equation}
	where $\mathcal{A}$ represents the multiplier matrix of the linear operators
	\begin{equation}\label{3..44}
    \mathcal{A}\overset{\Delta}{=}\begin{bmatrix}
	-\nu (\eta^2+l^2)  & \beta-1\\
	-\beta \frac{l^2}{\eta^2+l^2} & -\nu (\eta^2+l^2)
	\end{bmatrix}.
	\end{equation}
	We compute the fundamental matrix $e^{\mathcal{A} t}$ explicitly. The characteristic polynomial associated
	with $\mathcal{A}$ is given by
	\begin{equation*}
	\left[ \lambda+ \nu \left( \eta^2+l^2 \right)\right]^2+\beta \left( \beta-1\right) \frac{l^2}{\eta^2+l^2}=0,
	\end{equation*}
	and thus the spectra of $\mathcal{A}$ as follows
	\begin{equation*}
	\lambda_1=-\nu |\eta, l|^2+i \sqrt{\beta (\beta-1)} \frac{|l|}{|\eta, l|}, \quad  \quad \lambda_2=-\nu |\eta, l|^2-i \sqrt{\beta (\beta-1)} \frac{|l|}{|\eta, l|}.
	\end{equation*}
	Note that the eigenvalues $\lambda_1$ and $\lambda_2$ are a pair of conjugate complex roots. The eigenvectors $\mathcal{V}_{i}$ $(i=1, 2)$ associated with  $\lambda_{i}$  satisfy
	\begin{align*}
	\left( \lambda_{i} \textbf{I} - \mathcal{A} \right) \mathcal{V}_{i}=\begin{bmatrix}
	\lambda_{i}+\nu (\eta^2+l^2)  & 1-\beta\\
	\beta \frac{l^2}{\eta^2+l^2} & \lambda_{i}+\nu (\eta^2+l^2)
	\end{bmatrix} \begin{bmatrix}
	\mathcal{V}_{i 1}\\
	\mathcal{V}_{i 2}
	\end{bmatrix}=0,
	\end{align*}
    which gives
	\begin{equation*}
	\mathcal{V}_{1}=\begin{bmatrix}
	\beta-1 \\
	i \sqrt{\beta (\beta-1)} \frac{|l|}{|\eta, l|}
	\end{bmatrix}, \quad \quad
	\mathcal{V}_{2}=\begin{bmatrix}
	1-\beta \\
	i \sqrt{\beta (\beta-1)} \frac{|l|}{|\eta, l|}
	\end{bmatrix}.
	\end{equation*}
	To simplify the expression, we sometimes use $h(\beta, \eta, l)$ to represent $\sqrt{\beta (\beta-1)} \frac{|l|}{|\eta, l|}$.
	Let $\widehat{\widetilde{u}_{0}^{1, 2}}(t)= c_1 e^{\lambda_1 t} \mathcal{V}_{1}+c_2 e^{\lambda_2 t} \mathcal{V}_{2}$ with the initial data $\widehat{\widetilde{u}_{0 \mathrm{in}}^{1}}$ and $\widehat{\widetilde{u}_{0 \mathrm{in}}^{2}}$, then $c_1$ and $c_2$ satisfy
	\begin{equation}
	c_{1}=\frac{i h \widehat{\widetilde{u}_{0 \mathrm{in}}^{1}}+(\beta-1) \widehat{\widetilde{u}_{0 \mathrm{in}}^{2}}}{2 i h(\beta-1)}, \quad \quad 
	c_{2}=\frac{(\beta-1) \widehat{\widetilde{u}_{0 \mathrm{in}}^{2}}-i h \widehat{\widetilde{u}_{0 \mathrm{in}}^{1}}}{2 i h(\beta-1)}.
	\end{equation}
	As a consequence,  when  $l\neq 0$, the solution $\widetilde{u}_{0}^{1}$ and $\widetilde{u}_{0}^{2}$ is given by
	\begin{equation}
	\begin{cases}\label{3.44}
	\widehat{\widetilde{u}_{0}^{1}}(t)=\frac{1}{2}\left(e^{-\nu(\eta^2+l^2) t+i h t} +e^{-\nu(\eta^2+l^2) t-i h t}\right) \widehat{\widetilde{u}_{0 \mathrm{in}}^{1}} \\
	\quad \quad \quad +\frac{i(\beta-1)}{2 h}\left(e^{-\nu(\eta^2+l^2) t-i h t}-e^{-\nu(\eta^2+l^2) t+i h t} \right) \widehat{\widetilde{u}_{0 \mathrm{in}}^{2}}, \\
	\widehat{\widetilde{u}_{0}^{2}}(t)=\frac{1}{2}\left(e^{-\nu(\eta^2+l^2) t+i h t}+e^{-\nu(\eta^2+l^2) t-i h t}\right)  \widehat{\widetilde{u}_{0 \mathrm{in}}^{2}}\\
	\quad \quad \quad+\frac{i h}{2(\beta-1)}\left(e^{-\nu(\eta^2+l^2) t+i h t}-e^{-\nu(\eta^2+l^2) t-i h t}\right) \widehat{\widetilde{u}_{0 \mathrm{in}}^{1}}.
	\end{cases}
	\end{equation}
	Due to $e^{i h t}+e^{-i h t}=2 \cos (h t)$ and $e^{i h t}-e^{-i h t}=2 i \sin (h t)$, \eqref{3.44} is rewritten as
	\begin{equation}\label{3.45}
	\begin{cases}
	\widehat{\widetilde{u}_{0}^{1}}(t)=e^{-\nu(\eta^2+l^2) t} \cos (h t) \, \widehat{\widetilde{u}_{0 \mathrm{in}}^{1}}+\frac{(\beta-1)}{h} e^{-\nu(\eta^2+l^2) t} \sin (ht) \, \widehat{\widetilde{u}_{0 \mathrm{in}}^{2}}, \\
	\widehat{\widetilde{u}_{0}^{2}}(t)=e^{-\nu(\eta^2+l^2) t} \cos (h t) \, \widehat{\widetilde{u}_{0 \mathrm{in}}^{2}}-\frac{h}{(\beta-1)} e^{-\nu(\eta^2+l^2) t} \sin (h t)  \, \widehat{\widetilde{u}_{0 \mathrm{in}}^{1}}.
	\end{cases}
	\end{equation}
	
By using the incompressible condition, $\widetilde{u}_{0}^{3}$   can be expressed as $-\partial_{z}^{-1} \partial_{y} \widetilde{u}_{0}^{2}(t, y, z)$. In addition, we can also use the Duhamel principle and \eqref{3.45} to represent it more explicitly
	\begin{align}\label{3.46}
	\widehat{\widetilde{u}_{0 }^{3}}(t)=&e^{-\nu(\eta^{2}+l^{2}) t} \widehat{\widetilde{u}_{0 \mathrm{in}}^{3}}+\int_{0}^{t} e^{-\nu(\eta^{2}+l^{2})(t-\tau)}  \frac{\beta \eta l}{\eta^{2}+l^{2}} \widehat{\widetilde{u}_{0}^{1}}(\tau) d \tau  \nonumber\\
	=&e^{-\nu(\eta^{2}+l^{2}) t} \widehat{\widetilde{u}_{0 \mathrm{in}}^{3}}+\int_{0}^{t} e^{-\nu(\eta^{2}+l^{2})(t-\tau)}  \frac{\beta\eta l}{\eta^{2}+l^{2}}\left[e^{-\nu\left(\eta^{2}+l^{2}\right) \tau} \cos (h \tau) \widehat{\widetilde{u}_{0 \mathrm{in}}^{1}}\right.  \nonumber\\
	&\left.+\frac{(\beta-1)}{h} e^{-\nu(\eta^2+l^2) \tau} \sin (h\tau) \, \widehat{\widetilde{u}_{0 \mathrm{in}}^{2}} \right] d \tau  \nonumber\\
	=&e^{-\nu\left(\eta^{2}+l^{2} \right)t} \widehat{\widetilde{u}_{0 \mathrm{in}}^{3}}+e^{-\nu\left(\eta^{2}+l^2 \right) t} \sqrt{\frac{\beta}{(\beta-1)}} \, \frac{|\eta|}{|\eta, l|}  \, \sin (h t)  \,\widehat{\widetilde{u}_{0 \mathrm{in}}^{1}} \nonumber\\
	&-e^{-\nu\left(\eta^{2}+l^2 \right) t} \, \frac{\eta}{l} \, (\cos(ht)-1)  \, \widehat{\widetilde{u}_{0 \mathrm{in}}^{2}}.
	\end{align}
	Finally,  it follows from inverse Fourier transform, \eqref{3.45} and \eqref{3.46} that \eqref{1.13} holds.
\end{proof}

Contrast to the classical Navier-Stokes equations, by means of \eqref{3.45} and \eqref{3.46} we find that although rotation brings about fluid oscillation, it cancels the lift-up effect. In other words, in the zero frequency part, the oscillation effect caused by the rotation is significantly stronger than the lift-up effect caused by the shear flow $(y, 0, 0)$. Thus, for (NSC) we can expect the zero frequency of the velocity field $u_0$ to have the following best
global in time estimate
\begin{equation}\label{3.521}
\|u_{0} \|_{L^{\infty}H^{s}}^{2}+\nu\| \nabla u_{0} \|_{L^{2}H^{s}}^{2}\lesssim \varepsilon^{2}.
\end{equation}

\subsection{The dispersive estimates on $\widetilde{u}_{0}$}\label{section3.3}
In section \ref{section3.2}, we give the  expression for the zero frequency of the velocity field. In the process of its proof, we notice that the imaginary part of the spectrum of the multiplier matrix is non-zero, which implies that it may have some dispersion mechanism. In this section, we aim to further prove that the rotation effect  leads to dispersion estimates for the simple zero frequency velocity $\widetilde{u}_{0}$.
 
Dispersive phenomena often play a crucial role in the study of evolution partial differential equations. Mathematically, exhibiting dispersion often amounts to proving a decay estimate for the $L^{\infty}$-norm of the solution at time $t$ in terms of negative power of $t$ and of the $L^1$-norm of the initial data.
The basic idea of proving these dispersive estimates relies on the Van der Corput lemma and on a explicit representation of the solution.   Thus, we gives the dispersive estimates on $\widetilde{u}_{0}^{i}$ $(i=1, 2, 3)$ in the following proposition.

\begin{pro}\label{pro3.4}
Assume that $\nu > 0$   and $\beta \in \mathbb{R}$ with $|\beta| \geqslant 2$.  For any  $\widetilde{u}_{0\mathrm{in}} \in \mathcal{S}(\mathbb{R} \times \mathbb{T})$ 
and  $a \geqslant 0$,   there holds that
\begin{align}
\left\|\left|\partial_{z}\right|^{a}\left|\nabla_{y, z}\right|^{-a} e^{\mathcal{L}_{+} t} \widetilde{u}_{0 \mathrm{in}}^{2} \right\|_{L^{\infty}(\mathbb{R} \times \mathbb{T})} \lesssim & e^{-\nu t}\left(  \sqrt{\beta (\beta-1)} \, t\right)^{-\frac{1}{3}} \|\widetilde{u}_{0 \mathrm{in}}^{2}\|_{W^{4,1}(\mathbb{R} \times \mathbb{T})}, \label{3.49} \\
\left\|\left|\partial_{z}\right|^{a}\left|\nabla_{y, z}\right|^{-a} e^{\mathcal{L}_{+} t} \widetilde{u}_{0 \mathrm{in}}^{1} \right\|_{L^{\infty}(\mathbb{R} \times \mathbb{T})} \lesssim & e^{-\nu t}\left(  \sqrt{\beta (\beta-1)} \, t\right)^{-\frac{1}{3}} \|\widetilde{u}_{0 \mathrm{in}}^{1}\|_{W^{4,1}(\mathbb{R} \times \mathbb{T})}, \label{3.51} \\
\left\|\left|\partial_{z}\right|^{-a}\left|\nabla_{y, z}\right|^{a} e^{\mathcal{L}_{+} t} \widetilde{u}_{0 \mathrm{in}}^{2} \right\|_{L^{\infty}(\mathbb{R} \times \mathbb{T})} \lesssim & e^{-\nu t}\left(  \sqrt{\beta (\beta-1)} \, t\right)^{-\frac{1}{3}} \|\widetilde{u}_{0 \mathrm{in}}^{2}\|_{W^{4+a,1}(\mathbb{R} \times \mathbb{T})}. \label{3.51.3}
\end{align}
Furthermore, if we replace $\mathcal{L}_{+}$ with $\mathcal{L}_{-}$, the estimates \eqref{3.49}--\eqref{3.51.3} are also valid.

\end{pro}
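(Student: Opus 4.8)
The three estimates \eqref{3.49}, \eqref{3.51} and \eqref{3.51.3} differ only in the Fourier multiplier composed with $e^{\mathcal{L}_{+}t}$ (bounded in the first two, at most polynomially growing in the third) and in the name of the initial datum, so my plan is to prove a single statement: for a symbol $\mu(k,\eta,l)$ of the form $(1+\eta^{2}/l^{2})^{\pm a/2}$ supported on the simple-zero-frequency sector $k=0$, $l\neq0$, one has $\|\mu(D)e^{\mathcal{L}_{+}t}g\|_{L^{\infty}(\mathbb{R}\times\mathbb{T})}\lesssim e^{-\nu t}(\sqrt{\beta(\beta-1)}\,t)^{-1/3}\|g\|_{W^{4,1}}$ in the $-a/2$ case, and the same with $\|g\|_{W^{4+a,1}}$ in the $+a/2$ case. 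Since $z\in\mathbb{T}$ I would expand $g$ in its $z$-Fourier series and estimate the $l$-th coefficient in $L^{\infty}_{y}$ for each $l\neq0$; for fixed $l$ the operator $\mu(D)e^{\mathcal{L}_{+}t}$ acts as convolution in $y$ with a kernel $K^{t}_{l}$ whose $y$-Fourier transform is
\[
\widehat{K^{t}_{l}}(\eta)=\mu(0,\eta,l)\,e^{-\nu(\eta^{2}+l^{2})t}\,e^{\,it\sqrt{\beta(\beta-1)}\,|l|/\sqrt{\eta^{2}+l^{2}}}.
\]
Pulling out the factor $e^{-\nu l^{2}t}\le e^{-\nu t}$ (here $|l|\ge1$ is essential), and using $\|g_{l}\|_{L^{1}_{y}}\le(2\pi|l|)^{-N}\|g\|_{W^{N,1}}$ together with $\|g\|_{L^{\infty}}\le\sum_{l}\|g_{l}\|_{L^{\infty}_{y}}$, the whole proposition reduces to the one-dimensional kernel bound $\|K^{t}_{l}\|_{L^{\infty}_{y}}\lesssim|l|^{\kappa}e^{-\nu l^{2}t}(\sqrt{\beta(\beta-1)}\,t)^{-1/3}$ for a fixed $\kappa<N$, followed by the convergent sum $\sum_{l\neq0}|l|^{\kappa-N}$. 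The estimates for $\mathcal{L}_{-}$ then follow immediately, since its symbol is the complex conjugate of that of $\mathcal{L}_{+}$, so $|K^{t}_{l}|$ is unchanged.

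The heart is an oscillatory-integral estimate. After the rescaling $\eta=|l|\xi$ the kernel becomes $|l|\int_{\mathbb{R}}e^{i(2\pi|l|y\xi+\lambda g(\xi))}\tilde\mu(\xi)e^{-\nu l^{2}t\xi^{2}}\,d\xi$ with the $l$-independent phase $g(\xi)=(1+\xi^{2})^{-1/2}$ and the large parameter $\lambda:=\sqrt{\beta(\beta-1)}\,t$, exactly the quantity in the claimed decay. The decisive structural fact is that $g''(\xi)=(2\xi^{2}-1)(1+\xi^{2})^{-5/2}$ and $g'''(\xi)=3\xi(3-2\xi^{2})(1+\xi^{2})^{-7/2}$ never vanish simultaneously — $g''$ vanishes only at the inflection points $\xi=\pm1/\sqrt2$, where $g'''\neq0$ — hence $|g''|+|g'''|\gtrsim1$ on $|\xi|\le2$. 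Covering $[-2,2]$ by finitely many intervals on each of which $|g''|\gtrsim1$ or $|g'''|\gtrsim1$ and applying the amplitude-weighted Van der Corput lemma (Lemma \ref{lem2.1}, with $|\Phi^{(k)}|=\lambda|g^{(k)}|$ for $k=2$ or $3$; the amplitude $\tilde\mu(\xi)e^{-\nu l^{2}t\xi^{2}}$ has $L^{\infty}$-norm and total variation $\lesssim1$, since a Gaussian has total variation $\le2$ uniformly), the contribution of $|\xi|\le2$ is $\lesssim\lambda^{-1/3}$, and the exponent $1/3$ rather than $1/2$ is forced precisely by the $k=3$ intervals at the inflection points. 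For $\lambda\lesssim1$ (equivalently, $t$ small) the proposition is instead trivial: $e^{\mathcal{L}_{+}t}$ and $\mu(D)$ are uniformly bounded and $W^{4,1}(\mathbb{R}\times\mathbb{T})\hookrightarrow L^{\infty}$.

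The genuine difficulty, and the step I expect to be the main obstacle, is the high-frequency region $|\eta|\gtrsim|l|$, i.e. $|\xi|\gtrsim1$, where the phase flattens ($|g''(\xi)|\sim|\xi|^{-3}\to0$) so that Van der Corput alone loses powers of the frequency. My plan is to trade regularity for decay: split $g$ into its $y$-frequency-$\lesssim|l|$ and $\gtrsim|l|$ parts using the uniformly $L^{1}_{y}$-bounded multipliers $\chi(D_{y}/|D_{z}|)$ and $1-\chi(D_{y}/|D_{z}|)$; the low piece has compactly supported symbol and is handled by the previous paragraph, while on the high piece I transfer a fixed number $M$ of derivatives $\partial_{y}^{M}$ onto the datum, so the remaining kernel carries an amplitude decaying like $|\xi|^{-(M\mp a)}$ with $M\mp a>3/2$. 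A dyadic decomposition $|\xi|\sim2^{j}$ together with Van der Corput with $k=2$ and the lower bound $|g''(\xi)|\gtrsim2^{-3j}$ then produces a geometric sum $\lesssim\lambda^{-1/2}\sum_{j}2^{(3/2-M\pm a)j}\lesssim\lambda^{-1/2}\le\lambda^{-1/3}$, paid for by $\|\partial_{y}^{M}g_{l}\|_{L^{1}_{y}}\lesssim|l|^{-k}\|g\|_{W^{M+k,1}}$ with $M+k\le4$ (resp. $4+a$) and $M+k>2$. Assembling the low- and high-frequency contributions, restoring $e^{-\nu l^{2}t}\le e^{-\nu t}$ and summing $\sum_{l\neq0}|l|^{-3}$ gives \eqref{3.49} and \eqref{3.51}, and the same scheme with $a$ extra derivatives on the datum gives \eqref{3.51.3}. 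Keeping the Van der Corput gains, the dissipative Gaussian, and the traded derivatives all quantitative and uniform in $l,t,\nu,\beta$ throughout this bookkeeping is the delicate point; everything else is the routine machinery of stationary and non-stationary phase.
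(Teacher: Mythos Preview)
Your proposal is correct and follows essentially the same strategy as the paper: expand in the $z$-Fourier series, extract the heat factor $e^{-\nu l^{2}t}\le e^{-\nu t}$, establish a fixed-$l$ dispersive bound for the one-dimensional oscillatory integral with phase $g(\xi)=(1+\xi^{2})^{-1/2}$, and then sum in $l$ against the $W^{N,1}_{z}$ regularity of the datum. The only substantive difference is that the paper does not carry out the oscillatory-integral step in place but instead quotes the fixed-$l$ estimate (your kernel bound $\|K^{t}_{l}\|_{L^{\infty}_{y}}\lesssim|l|\,\lambda^{-1/3}$, essentially) from Proposition~3.1 of \cite{ZZW2024}; your low/high-frequency splitting, derivative transfer, and dyadic Van~der~Corput argument amount to a self-contained proof of exactly that input. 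Your more explicit treatment buys independence from the external reference and makes transparent why the decay rate is $t^{-1/3}$ (the inflection points of $g$ at $\xi=\pm1/\sqrt2$), at the cost of the bookkeeping you identify; the paper's route is shorter but black-boxes the mechanism.
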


\begin{proof}
	Without loss of generality, here we only estimate the $\mathcal{L}_{+}$ operator, and $\mathcal{L}_{-}$ is similar. Motivated by  Proposition 3.1 in \cite{ZZW2024}, the definition  of $\mathcal{L}_{+}$ as in \eqref{3.1.4} and Lemma \ref{lem2.1}, 	for any $\mu>0$ we have
	\begin{align}\label{3.52}
	|l|^a \left\| (l^2-\partial_{y}^2)^{-\frac{a}{2}} e^{(\mathcal{L}_{+}-\nu \Delta_{y, z}) t}  \widetilde{u}_{0 \mathrm{in}}^{2}  \right\|_{L^{\infty}(\mathbb{R})} \leqslant C|l| \left(\sqrt{\beta (\beta-1)} \, t\right)^{-\frac{1}{3}} \left\| \widetilde{u}_{0 \mathrm{in}}^{2} \right\|_{W^{\frac{3}{2}+\mu, 1}(\mathbb{R})}.
	\end{align}
 Hence,  using \eqref{3.52} yields that
	\begin{align}\label{3.53}
	\left| \left|\partial_{z}\right|^{a}\left|\nabla_{y, z}\right|^{-a} e^{\mathcal{L}_{+} t} \widetilde{u}_{0 \mathrm{in}}^{2}(y, z) \right|= & \left| \sum_{l \neq 0} e^{2 \pi i z l-\nu l^{2} t} \int_{\eta} e^{2\pi i y \eta-\nu t \eta^{2}+i t \sqrt{\beta (\beta-1)} \frac{|l|}{|\eta, l|}}|l|^{a}\left(l^{2}+\eta^{2}\right)^{-\frac{a}{2}} \widehat{\widetilde{u}_{0 \mathrm{in}}^{2}} (\eta, l) d \eta \right| \nonumber\\
	\leqslant&  \sum_{l \neq 0} e^{-\nu t}|l|^{a}\left|\int_{\eta} e^{2 \pi i y \eta+i t \sqrt{\beta(\beta-1)} \frac{|l|}{|\eta, l|}} e^{-\nu t \eta^{2}}\left(l^{2}+\eta^{2}\right)^{-\frac{a}{2}} \widehat{\widetilde{u}_{0 \mathrm{in}}^{2}} (\eta, l) d \eta\right|  \nonumber\\
	\leqslant & e^{-\nu t} \sum_{l \neq 0}|l|^{a}\left\| e^{(\mathcal{L}_{+}-\nu \Delta_{y, z}) t}\left(e^{\nu t \partial_{y}^{2}}\left(l^{2}-\partial_{y}^{2}\right)^{-\frac{a}{2}} \widetilde{u}_{0 \mathrm{in} l}^{2}(y) \right)\right\|_{L^{\infty}(\mathbb{R})}  \nonumber\\
	\lesssim & e^{-\nu t}(\sqrt{\beta (\beta-1)} \,t )^{-\frac{1}{3}} \sum_{l \neq 0}|l|\left\|e^{\nu  \partial_{y}^{2} t} \widetilde{u}_{0 \mathrm{in} l}^{2}(y)\right\|_{W^{\frac{3}{2}+\mu, 1}(\mathbb{R})}  \nonumber\\
	\lesssim &e^{-\nu t}(\sqrt{\beta (\beta-1)} \,t )^{-\frac{1}{3}} \sum_{l \neq 0}|l|^{-1-\mu}|l|^{2+\mu}\left\|{\widetilde{u}_{0 \mathrm{in} l}^{2}(y)}\right\|_{W^{\frac{3}{2}+\mu, 1}(\mathbb{R})}  \nonumber\\
	\lesssim &e^{-\nu t}(\sqrt{\beta (\beta-1)} \,t )^{-\frac{1}{3}} \sum_{l \neq 0}|l|^{-1-\mu}\|\widetilde{u}_{0 \mathrm{in}}^{2}\|_{W_{y}^{\frac{3}{2}+\mu, 1} W_{z}^{2+\mu, 1}(\mathbb{R} \times \mathbb{T})} . 
	\end{align}
	
    Similarly for $\widetilde{u}_{0 \mathrm{in}}^{1}$, it also holds
	\begin{equation}\label{3.54}
	\left| \left|\partial_{z}\right|^{a}\left|\nabla_{y, z}\right|^{-a} e^{\mathcal{L}_{+} t} \widetilde{u}_{0 \mathrm{in}}^{1}(y, z) \right| \lesssim e^{-\nu t}(\sqrt{\beta (\beta-1)} \,t )^{-\frac{1}{3}} \sum_{l \neq 0}|l|^{-1-\mu}\|\widetilde{u}_{0 \mathrm{in}}^{1}\|_{W_{y}^{\frac{3}{2}+\mu, 1} W_{z}^{2+\mu, 1}(\mathbb{R} \times \mathbb{T})}.
	\end{equation}
	Next, deducing from Proposition 3.1 in \cite{ZZW2024}, we have the following fact
	\begin{align}
	\left\| |l|^{-a} \left( l^2-\partial_{y}^2  \right)^{\frac{a}{2}}  e^{(\mathcal{L}_{+}-\nu \Delta_{y, z}) t}  \widetilde{u}_{0 \mathrm{in}}^{2}  \right\|_{L^{\infty}(\mathbb{R})} \lesssim |l|(\sqrt{\beta (\beta-1)} \, t)^{-\frac{1}{3}} \left\| \widetilde{u}_{0 \mathrm{in}}^{2} \right\|_{W^{\frac{3}{2}+a+\mu, 1}(\mathbb{R})}.
	\end{align}
Hence, we get
	\begin{align}
	\left| \left|\partial_{z}\right|^{-a}\left|\nabla_{y, z}\right|^{a} e^{\mathcal{L}_{+} t} \widetilde{u}_{0 \mathrm{in}}^{2}(y, z) \right|= & \left| \sum_{l \neq 0} e^{2\pi i z l-\nu l^{2} t} \int_{\eta} e^{2\pi i y \eta-\nu t \eta^{2}+i t \sqrt{\beta (\beta-1)} \frac{|l|}{|\eta, l|}}|l|^{-a}\left(l^{2}+\eta^{2}\right)^{\frac{a}{2}} \widehat{\widetilde{u}_{0 \mathrm{in}}^{2}}(\eta, l) d \eta \right| \nonumber\\
	\leqslant&  \sum_{l \neq 0} e^{-\nu t}|l|^{-a}\left|\int_{\eta} e^{2\pi i y \eta+i t \sqrt{\beta(\beta-1)} \frac{|l|}{|\eta, l|}} e^{-\nu t \eta^{2}}\left(l^{2}+\eta^{2}\right)^{\frac{a}{2}} \widehat{\widetilde{u}_{0 \mathrm{in}}^{2}}(\eta, l) d \eta\right|  \nonumber\\
	\leqslant & e^{-\nu t} \sum_{l \neq 0}|l|^{-a}\left\| e^{(\mathcal{L}_{+}-\nu \Delta_{y, z}) t}\left(e^{\nu t \partial_{y}^{2}}\left(l^{2}-\partial_{y}^{2}\right)^{\frac{a}{2}} \widetilde{u}_{0 \mathrm{in} l}^{2} (y) \right)\right\|_{L^{\infty}(\mathbb{R})}  \nonumber\\
	\lesssim &e^{-\nu t}(\sqrt{\beta (\beta-1)} \,t )^{-\frac{1}{3}} \sum_{l \neq 0}|l|^{-1-\mu}|l|^{2+\mu}\left\|{\widetilde{u}_{0 \mathrm{in} l}^{2}}(y)\right\|_{W^{\frac{3}{2}+a+\mu, 1}(\mathbb{R})}  \nonumber\\
	\lesssim &e^{-\nu t}(\sqrt{\beta (\beta-1)} \,t )^{-\frac{1}{3}} \sum_{l \neq 0}|l|^{-1-\mu}\|\widetilde{u}_{0 \mathrm{in}}^{2}\|_{W_{y}^{\frac{3}{2}+a+\mu, 1} W_{z}^{2+\mu, 1}(\mathbb{R} \times \mathbb{T})}.
	\end{align}
Therefore, we finish the proof of Proposition \ref{pro3.4}.
\end{proof}

In the following, we complete the proof of  Theorem \ref{1.1.} and give the dispersion estimate \eqref{3.6}--\eqref{3.6.2}.  Using the explicit solution \eqref{1.13} and choose $a=0$ and $a=1$ in \eqref{3.49} and \eqref{3.51} respectively, we get
\begin{align}\label{3.55}
  \left|  \widetilde{u}_{0}^{2}(t, y, z)  \right| \lesssim & \left| e^{\mathcal{L}_{+} t}  \widetilde{u}_{0 \mathrm{in}}^{2} \right|+\left| e^{\mathcal{L}_{-} t} \widetilde{u}_{0 \mathrm{in}}^{2} \right|+ \left| |\nabla_{y, z}|^{-1} |\partial_{z}|  e^{\mathcal{L}_{+} t} \widetilde{u}_{0 \mathrm{in}}^{1} \right| +\left| |\nabla_{y, z}|^{-1} |\partial_{z}|  e^{\mathcal{L}_{-} t} \widetilde{u}_{0 \mathrm{in}}^{1} \right| \nonumber \\
  \lesssim & e^{-\nu t}(\sqrt{\beta (\beta-1)} \,t )^{-\frac{1}{3}} \sum_{l \neq 0}|l|^{-1-\mu} \| \left(\widetilde{u}_{0 \mathrm{in}}^{1}, \widetilde{u}_{0 \mathrm{in}}^{2} \right)\|_{W_{y}^{\frac{3}{2}+\mu, 1} W_{z}^{2+\mu, 1}(\mathbb{R} \times \mathbb{T})}.
\end{align}
For $\widetilde{u}_{0}^{1}(t, y, z)$, by taking $a=1$ in \eqref{3.51.3}, it yields
\begin{align}\label{3.56}
	\left| \widetilde{u}_{0}^{1}(t, y, z) \right|  &\lesssim \left| e^{\mathcal{L}_{+} t}  \widetilde{u}_{0 \mathrm{in}}^{1} \right|+ \left| e^{\mathcal{L}_{-} t}  \widetilde{u}_{0 \mathrm{in}}^{1} \right| + \left|  |\partial_{z}|^{-1} |\nabla_{y, z}| e^{\mathcal{L}_{-} t}  \widetilde{u}_{0 \mathrm{in}}^{2} \right|+ \left|  |\partial_{z}|^{-1} |\nabla_{y, z}| e^{\mathcal{L}_{+} t}  \widetilde{u}_{0 \mathrm{in}}^{2} \right| \nonumber \\
	 & \lesssim e^{-\nu t}\left(  \sqrt{\beta (\beta-1)} \, t\right)^{-\frac{1}{3}}  \sum_{l \neq 0}|l|^{-1-\mu} \Big( \|\widetilde{u}_{0 \mathrm{in}}^{1}\|_{W_{y}^{\frac{3}{2}+\mu, 1} W_{z}^{2+\mu, 1}(\mathbb{R} \times \mathbb{T})} \nonumber \\&\quad+\|\widetilde{u}_{0 \mathrm{in}}^{2}\|_{W_{y}^{\frac{5}{2}+\mu, 1} W_{z}^{2+\mu, 1}(\mathbb{R} \times \mathbb{T})} \Big).
\end{align}

Finally, combining \eqref{3.55} and the incompressible condition $\widetilde{u}_{0}^{3}(t, y, z)=-\partial_{z}^{-1} \partial_{y} \widetilde{u}_{0}^{2}(t, y, z)$ with $l \neq 0$, we obtain immediately
\begin{align}\label{3.57}
	\left|  \widetilde{u}_{0}^{3}(t, y, z)  \right| = & \left| \partial_{z}^{-1} \partial_{y} \widetilde{u}_{0}^{2}(t, y, z) \right| \nonumber \\
	\lesssim & e^{-\nu t}(\sqrt{\beta (\beta-1)} \,t )^{-\frac{1}{3}} \sum_{l \neq 0}|l|^{-2-\mu} \| \left(\widetilde{u}_{0 \mathrm{in}}^{1}, \widetilde{u}_{0 \mathrm{in}}^{2} \right)\|_{W_{y}^{\frac{5}{2}+\mu, 1} W_{z}^{2+\mu, 1}(\mathbb{R} \times \mathbb{T})}.
\end{align}
Thus, \eqref{3.55}--\eqref{3.57} implies the dispersion estimates \eqref{3.6}--\eqref{3.6.2} of $\widetilde{u}_{0}(t, y, z)$.

Here, we want to point out that we do not take full advantage of the dispersive property of $\widetilde{u}_{0}$ in the handle of  the nonlinear stability. This is because  we involve the interaction between non-zero frequency velocity fields $(\neq \cdot \neq)$ in the calculation process. Though we construct good unknowns $Q_{\neq}^2$ and $K_{\neq}^2$   in Fourier space , which enhances the dissipation of the non-zero frequency velocity field $U_{\neq}$. However, under the framework of bootstrap hypothesis,  we can only  get  the stability threshold $\gamma$ is 1. Of course, we will also explore new methods to make the threshold $\gamma$  less than 1, which is left for future research. 

\section{The nonlinear stability }\label{section4}
In the following, we will introduce the ideas of the  proof of nonlinear stability of (\ref{1.3}). To begin with, applying the energy methods in \cite{LO97} and \cite{MB02}, we have the  existence of the local solution, which  is further extended to the global one. 

\begin{lem}[Local existence]\label{lem4.3}
	Under the condition of Theorem \ref{1..1}, let $u_{\mathrm{in}}$ be divergence-free and satisfy \eqref{1..11-2}. Then there exists a $T^{\ast}>0$ independent of $\nu$ and $\beta$ such that there is a unique strong solution $u(t)\in C([0,T^{\ast}];H^{\sigma})$, which satisfies the initial data and is real analytic for $t\in (0,T^{\ast}]$. Moreover, there exists a maximal existence time  $T_{0}$  with  $T^{\ast}<T_{0} \leqslant \infty$  such that the solution  $u(t)$  remains unique and real analytic on  $\left(0, T_{0}\right)$  and, if for some  $\tau \leqslant T_{0}$  we have $ \lim \sup _{t \rightarrow \tau}\|u(t)\|_{H^{\sigma-2}}<\infty $, then  $\tau<T_{0}$.
\end{lem}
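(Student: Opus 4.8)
The plan is to run the standard energy method for incompressible flows, exploiting the special algebraic structure of the rotation term. First I would apply the Leray projection $\mathbb{P}=\mathrm{Id}+\nabla(-\Delta)^{-1}\operatorname{div}$ to eliminate $p^{L}$ and $p^{NL}$, rewriting \eqref{1.3} as $\partial_t u+\mathbb{P}(y\partial_x u)+\mathbb{L}u-\nu\Delta u=-\mathbb{P}(u\cdot\nabla u)$ with $\mathbb{L}u=\mathbb{P}\left((1-\beta)u^2,\,\beta u^1,\,0\right)^{\mathrm{T}}$ a zeroth--order Fourier multiplier. Approximate solutions would be constructed by Friedrichs mollification (or Galerkin truncation), uniform $H^\sigma$ bounds derived as below, and the limit taken via Aubin--Lions compactness; uniqueness would come from a Gr\"onwall estimate for the $L^2$ norm of the difference of two solutions. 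Alternatively one may first pass to the moving frame \eqref{2.5}, killing the transport term and turning \eqref{2.11} into a genuinely parabolic system with bounded lower--order coefficients --- the viewpoint I would use for the analyticity step.

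\noindent The heart of the matter is the uniform a priori estimate. Pairing $\langle D\rangle^\sigma$ of the projected equation against $\langle D\rangle^\sigma u$ in $L^2$: the top--order transport term $\langle y\partial_x\langle D\rangle^\sigma u,\langle D\rangle^\sigma u\rangle$ vanishes after integrating by parts in $x$ (because $y$ is $x$--independent and $\mathbb{T}$ is periodic), while $[\langle D\rangle^\sigma,y\partial_x]$ is an order-$\sigma$ Fourier multiplier (symbol $\simeq k\eta\langle k,\eta,l\rangle^{\sigma-2}\lesssim\langle k,\eta,l\rangle^{\sigma}$), so it costs only $\lesssim\|u\|_{H^\sigma}^2$. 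The key structural observation is that $\mathbb{L}$ is $\mathbb{P}$ applied to the sum of the \emph{skew} matrix $\beta\,e_3\times(\cdot)$ --- which is annihilated in the energy identity and commutes with both $\langle D\rangle^\sigma$ and $\mathbb{P}$ --- and the $\beta$--independent matrix carrying only the lift--up coupling; hence $\mathbb{L}$ contributes $\lesssim\|u\|_{H^\sigma}^2$ with a constant free of $\beta$. The nonlinearity is controlled by a Kato--Ponce commutator estimate, using $H^\sigma\hookrightarrow W^{1,\infty}$ for $\sigma>\tfrac52$ together with the exact cancellation $\langle u\cdot\nabla\langle D\rangle^\sigma u,\langle D\rangle^\sigma u\rangle=0$, giving $\lesssim\|u\|_{H^\sigma}^3$; the residual pressure terms are gradients, hence $L^2$--orthogonal to the divergence--free field $\langle D\rangle^\sigma u$. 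This yields $\tfrac{d}{dt}\|u\|_{H^\sigma}^2\lesssim\|u\|_{H^\sigma}^2+\|u\|_{H^\sigma}^3$, where I would simply discard the favorable viscous term $-\nu\|\nabla u\|_{H^\sigma}^2$; solving the comparison ODE gives an existence time $T^{\ast}\gtrsim(1+\|u_{\mathrm{in}}\|_{H^\sigma})^{-1}$ depending on neither $\nu$ nor $\beta$, which under \eqref{1..11-2} is a fixed constant.

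\noindent For instantaneous analyticity on $(0,T^{\ast}]$ I would work in the moving frame and propagate an analytically weighted norm $\|e^{c(t)\langle D\rangle}U(t)\|_{H^\sigma}$ with radius $c(t)\sim\nu t$ for small $t$, in the style of \cite{LO97}: the parabolic smoothing of $\nu\Delta_L$ drives the estimate, the bounded lower--order terms of \eqref{2.11} (skew Coriolis included) are harmless, and analyticity in the original variables follows since \eqref{2.5} is an affine analytic change of frame; analyticity then propagates on all of $(0,T_0)$ by uniqueness. For the continuation criterion I would redo the $H^\sigma$ estimate keeping the weak norm: since $\sigma>\tfrac92$ we have $H^{\sigma-2}\hookrightarrow W^{1,\infty}$, so $\|\nabla u\|_{L^\infty}\lesssim\|u\|_{H^{\sigma-2}}$ bounds the Kato--Ponce commutator and $\tfrac{d}{dt}\|u\|_{H^\sigma}^2\lesssim(1+\|u\|_{H^{\sigma-2}})\|u\|_{H^\sigma}^2$. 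Thus if $\limsup_{t\to\tau}\|u(t)\|_{H^{\sigma-2}}<\infty$ for some $\tau\leqslant T_0$, Gr\"onwall keeps $\|u(t)\|_{H^\sigma}$ bounded on $[0,\tau)$, and re-running the local existence step from data at a time close to $\tau$ continues the solution past $\tau$, forcing $\tau<T_0$.

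\noindent The existence, uniqueness and continuation parts are essentially textbook (Majda--Bertozzi type) once the Leray projection is in place; I expect the genuinely delicate points to be (i) nailing down the uniformity of $T^{\ast}$ in $\nu$ and $\beta$, which rests entirely on recognizing the skew--symmetry of the Coriolis operator and on \emph{discarding} rather than exploiting the dissipation, and (ii) the instantaneous analyticity, which requires the Gevrey/analytic--norm machinery and care with the time--dependent multiplier $\Delta_L$ in the moving frame.
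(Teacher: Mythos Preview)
Your proposal is correct and follows exactly the route the paper indicates: the paper does not give an explicit proof of this lemma but simply invokes the energy methods of \cite{LO97} and \cite{MB02}, which is precisely the Majda--Bertozzi energy estimate for local existence/uniqueness/continuation combined with the Levermore--Oliver analytic-norm machinery for instantaneous analyticity that you have sketched. Your identification of the skew-symmetry of the Coriolis term (yielding $\beta$-independence of the constants) and the use of $\sigma>\tfrac{9}{2}$ to get $H^{\sigma-2}\hookrightarrow W^{1,\infty}$ for the continuation criterion are the right observations, and are more detailed than what the paper records.
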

%
%

We recall the  new variable \eqref{3.13} which plays a crucial role in the analysis of the linear stability. Here making full use of the  variable \eqref{3.13} and \eqref{2.10}, we obtain that
\begin{align}
&\partial_{t} Q^{2}-\nu \Delta_{L} Q^{2}+i \sqrt{\beta (\beta-1)} \partial_{Z} |\nabla_{L}|^{-1} \check{K}^{2} \nonumber\\
&\quad  =-Q\cdot \nabla_{L}U^{2}- U \cdot\nabla_{L}Q^{2}-2\partial_{i}^{L}U^{j}\partial_{ij}^{L}U^{2}+\partial_{Y}^{L}\left(\partial_{i}^{L}U^{j}\partial_{j}^{L}U^{i} \right), \label{HSX88} \\
&\partial_{t} \check{K}^2-\nu \Delta_{L} \check{K}^2+\partial_{XY}^{L}\Delta_{L}^{-1}\check{K}^{2}-i\sqrt{\beta(\beta-1)}\partial_{Z}|\nabla_{L}|^{-1}Q^{2}  \nonumber\\
&\quad  =i\sqrt{\frac{\beta}{\beta-1}}\partial_{Z}|\nabla_{L}|(U\cdot\nabla_{L}U^{1})-i\sqrt{\frac{\beta}{\beta-1}}\partial_{X}|\nabla_{L}| (U\cdot \nabla_{L}U^{3}). \label{HSX99} 
\end{align}
Thus, we easily deduce that $
\widehat{U^{1}}$ and $\widehat{U^{2}}$ satisfy 
\begin{align}\label{3..4}
\widehat{U^{1}}&=-|k,l|^{-2}k(\eta-kt)\widehat{U^{2}}-i |k,l|^{-2}l \,\widehat{W^{2}}\nonumber\\&=|k,l|^{-2}\frac{k(\eta-kt)}{k^{2}+(\eta-kt)^{2}+l^{2}}\widehat{Q^{2}}-\sqrt{\frac{\beta-1}{\beta}}|k,l|^{-2}\frac{l}{|k,\eta-kt,l|}K^{2}
\end{align}
and 
\begin{align}\label{3.5}
\widehat{U^{3}}&=- |k,l|^{-2} \,l(\eta-kt)\widehat{U^{2}}+i|k,l|^{-2}k \, \widehat{W^{2}}\nonumber\\&=|k,l|^{-2}\frac{l(\eta-kt)}{k^{2}+(\eta-kt)^{2}+l^{2}}\widehat{Q^{2}}+\sqrt{\frac{\beta-1}{\beta}}|k,l|^{-2}\frac{k}{|k,\eta-kt,l|}K^{2}.
\end{align}

With the help of analysis above, we will show the following bootstrap result. In fact, Theorem \ref{1..1} is mainly  proved by using the following bootstrap argument.  For convenient, we set $N \overset{\Delta}{=}\sigma-2>\frac{5}{2}$. 

\section{Bootstrap argument}\label{4.2}
\begin{pro}[Bootstrap step]\label{pro4.1}
	Under the hypothesis of Theorem \ref{1..1}, assume that for some $T>0$, we have the following estimates hold in $[0,T]$.
	
	$(1)$ the bounds on $Q_{\neq}^2$ and  $\check{K}_{\neq}^{2}$:
	\begin{align}
		\left\| m M Q_{\neq}^2 \right\|_{L^{\infty} H^N}+ \nu^{\frac{1}{2}} \left\| m M \nabla_{L} Q_{\neq}^2 \right\|_{L^{2} H^N}+ \left\| \sqrt{-\dot{M}M} m Q_{\neq}^2  \right\|_{L^{2} H^N} & \leqslant 10\varepsilon, \label{4..8}\\
		\left\| m M \check{K}_{\neq}^2 \right\|_{L^{\infty} H^N}+ \nu^{\frac{1}{2}} \left\| m M \nabla_{L} \check{K}_{\neq}^2 \right\|_{L^{2} H^N}+ \left\| \sqrt{-\dot{M}M} m \check{K}_{\neq}^2  \right\|_{L^{2} H^N} & \leqslant 10\varepsilon.     \label{4..5} 
	\end{align}
	
	$(2)$ the bounds on $Q_{0}^{2}$ and $\check{K}_{0}^{2}$:
	\begin{align}
		\left\| Q_0^2 \right\|_{L^{\infty} H^N}+ \nu^{\frac{1}{2}} \left\| \nabla Q_0^2 \right\|_{L^{2} H^N} & \leqslant 10\varepsilon,     \label{4..4}\\
		\left\| \check{K}_0^2 \right\|_{L^{\infty} H^N}+ \nu^{\frac{1}{2}} \left\| \nabla \check{K}_0^2 \right\|_{L^{2} H^N} & \leqslant 10\varepsilon.    \label{4..7}
	\end{align}
	
	$(3)$ the bounds on $U_{0}$:  
	\begin{align}
		\left\| \widetilde{U}_0^1 \right\|_{L^{\infty} H^{N}}+ \nu^{\frac{1}{2}} \left\| \nabla \widetilde{U}_0^1 \right\|_{L^{2} H^{N}} & \leqslant 10\varepsilon,     \label{4..11}\\
		\left\| \widetilde{U}_0^2 \right\|_{L^{\infty} H^{N}}+ \nu^{\frac{1}{2}} \left\| \widetilde{U}_0^2 \right\|_{L^{2} H^{N}}+ \nu^{\frac{1}{2}} \left\|\nabla\widetilde{U}_{0}^{2} \right\|_{L^{2}H^{N}}   & \leqslant 10\varepsilon,     \label{4..12}\\
		\left\| \widetilde{U}_0^3 \right\|_{L^{\infty} H^{N}}+ \nu^{\frac{1}{2}} \left\| \nabla \widetilde{U}_0^3 \right\|_{L^{2} H^{N}} & \leqslant 10\varepsilon,     \label{4..13}\\
		\left\|   \overline{U}_0^1  \right\|_{L^{\infty} H^{N+1}} +\nu^{\frac{1}{2}}\left\|\partial_{y}\overline{U}_{0}^{1} \right\|_{L^{2}H^{N+1}}  & \leqslant 10\varepsilon,     \label{4..9}\\
		\left\|   \overline{U}_0^3  \right\|_{L^{\infty} H^{N+1}} +\nu^{\frac{1}{2}}\left\|\partial_{y}\overline{U}_{0}^{3} \right\|_{L^{2}H^{N+1}}  & \leqslant 10\varepsilon.  \label{4..10}
	\end{align}
	Assume that $  \varepsilon=\left\| u_{\mathrm{in}} \right\|_{H^{N+2}}  \leqslant \delta \nu$, $\nu \in (0, 1)$, the  estimates \eqref{4..8}--\eqref{4..10} hold on $[0,T]$. Then for $\delta$ sufficiently small depending only on $\sigma$,  but not on $T$, these same estimates hold with all the occurrences of 10 on the right-hand side replaced by 5.
\end{pro}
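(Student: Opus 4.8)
The statement is closed by a continuity/bootstrap argument on top of the local solution of Lemma \ref{lem4.3}: for each of the five families of unknowns we derive an energy (or Duhamel) inequality whose linear part regenerates the left–hand sides of \eqref{4..8}--\eqref{4..10} and whose right–hand side is purely quadratic in the solution. Feeding in the a priori bounds with constant $10$ and the smallness $\varepsilon=\|u_{\mathrm{in}}\|_{H^{N+2}}\le\delta\nu$, every quadratic contribution is shown to be $O(\delta)\,\varepsilon$, hence $\le\varepsilon$ for $\delta$ small depending only on $\sigma$; adding this to the contribution $\lesssim\varepsilon$ of the initial data yields the same bounds with $10$ replaced by any constant strictly larger than the implicit one, in particular by $5$. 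The constant $C$ in $\lesssim$ depends only on $\sigma$ (and not on $\delta,\nu,\beta$), which is what makes the absorption legitimate.

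\textbf{Step 1: non–zero frequencies.} I work with \eqref{HSX88}--\eqref{HSX99} for $(Q_{\neq}^2,\check K_{\neq}^2)$, apply $\langle\nabla\rangle^N$, and pair the equations against $m^2M^2\langle\nabla\rangle^{2N}Q_{\neq}^2$ and $m^2M^2\langle\nabla\rangle^{2N}\check K_{\neq}^2$. The skew–symmetric coupling $i\sqrt{\beta(\beta-1)}\,\partial_Z|\nabla_L|^{-1}$ cancels when the two identities are summed. The stretching term $\partial_{XY}^L\Delta_L^{-1}\check K_{\neq}^2$ is absorbed, exactly as in the linear proof \eqref{4...3}--\eqref{HSX0000}, by $\tfrac12\nu p$ plus the Cauchy–Kovalevskaya term $\|\sqrt{-\dot m m}\,\cdot\|$ coming from the definition of $m$, while the ghost multiplier $M$ supplies the extra dissipative norm $\|\sqrt{-\dot M M}\,m\,\cdot\|_{L^2H^N}$ via Lemma \ref{lem4.2}. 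This closes the linear part and also produces the enhanced–dissipation factor (an $e^{-\nu t^3/24}$–type decay through Proposition \ref{pro3.1}). For the quadratic forcing I split into $0\!\cdot\!\neq$, $\neq\!\cdot\!0$ and $\neq\!\cdot\!\neq\to\neq$ interactions and use: the algebra property \eqref{2.3} for $H^N$ ($N>\tfrac52$); the relations \eqref{3..4}--\eqref{3.5}, equivalently $|\widehat{U_{\neq}^{1,3}}|\lesssim|k,l|^{-2}(|mM\widehat{Q_{\neq}^2}|+|mM\check K_{\neq}^2|)$ and $|\widehat{U_{\neq}^2}|\lesssim|k,l|^{-1}|k,\eta-kt,l|^{-1}|mM\widehat{Q_{\neq}^2}|$ together with \eqref{4.11}, \eqref{lem4.1.11}, to recover the velocity from the good unknowns; and Corollary \ref{cor4.1} to trade a bare $\|(\cdot)_{\neq}\|_{L^2H^N}$ for $\nu^{-1/6}$ times the $\sqrt{-\dot M M}$– and $\nu^{1/2}$–dissipation norms. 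The derivative–losing terms $Q\cdot\nabla_LU^2$ and $2\partial_i^LU^j\partial_{ij}^LU^2$ are handled by putting the top derivative on the factor carrying a dissipative norm and the remaining factor in $L^\infty H^{N-1}$. All resulting bounds are $\lesssim\varepsilon^2\nu^{-1}\lesssim\delta\varepsilon$, which is the point where the threshold $\varepsilon\le\delta\nu$ (rather than $\delta\nu^{3/2}$) is used.

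\textbf{Step 2: zero frequency $Q_0^2,\check K_0^2$; Step 3: zero–frequency velocity.} At $k=0$ the stretching term vanishes and \eqref{HSX88}--\eqref{HSX99} reduce to $\partial_t-\nu\Delta_{y,z}$ plus the skew coupling that again cancels in the summed energy, so a direct $H^N$ estimate gives $\|Q_0^2\|_{L^\infty H^N}^2+\nu\|\nabla Q_0^2\|_{L^2H^N}^2\lesssim\varepsilon^2+(\text{quadratic})$, and similarly for $\check K_0^2$; the $\neq\!\cdot\!\neq\to0$ forcing is time–integrable thanks to the enhanced dissipation of Step 1, and the $0\!\cdot\!0\to0$ forcing is $\lesssim\delta\varepsilon$ via \eqref{2.3} and the $\nu^{1/2}$–dissipation. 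For $\overline U_0^{1,3}$ the equation is the plain heat equation \eqref{3.2.2}; the quadratic forcing has \emph{no} double–zero self–interaction, so Lemma \ref{lem2.3} gives the bounds in \eqref{4..9}--\eqref{4..10} with quadratic part $\lesssim\delta\varepsilon$. For $\widetilde U_0^{1,2}$ the linear cross terms $(1-\beta)\widetilde u_0^2$, $\beta\partial_{zz}\Delta_{y,z}^{-1}\widetilde u_0^1$ preclude a naive energy estimate, so I use Duhamel with the explicit oscillatory propagator of Proposition \ref{pro3.3} (the bounded Fourier multipliers $e^{-\nu|\eta,l|^2t}\cos(ht)$, $e^{-\nu|\eta,l|^2t}\sin(ht)$, $h=\sqrt{\beta(\beta-1)}|l|/|\eta,l|$): the propagator is uniformly bounded on $H^N$, and the Duhamel integral of $\widetilde{(u\cdot\nabla u^{1})_0}$ etc.\ is estimated by Lemma \ref{lem2.3} and \eqref{2.3}, giving \eqref{4..11}--\eqref{4..12}. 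Finally $\widetilde U_0^3=-\partial_Z^{-1}\partial_Y\widetilde U_0^2$ (with $\partial_Z^{-1}$ bounded since $l\neq0$) and $|\widehat{\widetilde U_0^2}|=|\widehat{\Delta_{y,z}^{-1}\widetilde Q_0^2}|\le|\widehat{Q_0^2}|$ from Step 2 yield \eqref{4..13}. Collecting Steps 1--3 and choosing $\delta$ small enough improves all the constants $10$ to $5$.

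\textbf{Main obstacle.} The delicate point is Step 1: one must simultaneously (i) make the stretching term and the two Cauchy–Kovalevskaya terms coming from $m$ and $M$ fit together so the linear energy identity closes with the correct powers of $\nu$, as in \eqref{HSX0000} and Lemma \ref{lem4.2}, and (ii) control the derivative–losing quadratic interactions $Q\cdot\nabla_LU^2$ and $2\partial_i^LU^j\partial_{ij}^LU^2$ without spending the enhanced–dissipation gain, which forces one to use Corollary \ref{cor4.1}, the multiplier bounds \eqref{4.11}, \eqref{lem4.1.11}, \eqref{4.15} and the hypothesis $\varepsilon\le\delta\nu$ at full strength. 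The zero–frequency velocity in Step 3 is the only place where the linear dynamics is not amenable to a direct energy estimate, and there the oscillatory Duhamel representation of Proposition \ref{pro3.3} is indispensable.
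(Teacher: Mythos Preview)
Your proposal is correct and follows essentially the same route as the paper: the weighted $m^2M^2$ energy identity for $(Q_{\neq}^2,\check K_{\neq}^2)$ with the skew coupling cancelling and the stretching absorbed as in \eqref{HSX0000}, the direct $H^N$ energy for $(Q_0^2,\check K_0^2)$, the heat equation for $\overline U_0^{1,3}$, and Duhamel with the oscillatory propagator of Proposition~\ref{pro3.3} for $\widetilde U_0^1$. The only organisational differences are that the paper regroups the quadratic forcing on $Q_{\neq}^2$ as $\partial_Y^L(\partial_i^LU^j\partial_j^LU^i)_{\neq}-\Delta_L(U\cdot\nabla_LU^2)_{\neq}$ (integrating one $\nabla_L$ onto the test function) rather than treating $Q\cdot\nabla_LU^2$ and $2\partial_i^LU^j\partial_{ij}^LU^2$ separately, and it obtains \eqref{4..12} for $\widetilde U_0^2$ directly from the pointwise bound $|\widehat{\widetilde U_0^2}|\le|\widehat{Q_0^2}|$ rather than by a second Duhamel argument; both variants yield the same $\varepsilon^2\nu^{-1}\lesssim\delta\varepsilon$ bounds.
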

\begin{rem}\label{remark5}
	The proof of Theorem \ref{1..1} follows directly from Proposition \ref{pro4.1}: By standard local well-posedness results which are established in Lemma \ref{lem4.3}, we can assume that there exists $T^{\ast}>0$ such that the assumptions \eqref{4..8}--\eqref{4..10} hold on   $[0,T^{\ast}]$. Proposition \ref{pro4.1} and the continuity of these norms imply that the set of times on \eqref{4..8}--\eqref{4..10} holds is closed, open-empty in $[0,\infty)$. Moreover, combined with the existence of local solution, the global solution is unique and regular on the time $[0,\infty)$.
\end{rem}
\begin{rem}
Note that the nonzero frequency estimates of $Q^{2}$ and $\check{K}^2$ in \eqref{4..8} and \eqref{4..5} emphasize the enhanced dissipation effects of $U_{\neq}^{i}~ (i=1,2,3)$. From the previous zero-frequency estimation of the velocity field in linear stability analysis, we know that rotation cancels the lift-up effect which  appeared in the 3D classical Navier-Stokes equations. Indeed, the zero frequency $u_{0}$ as in \eqref{4..11}--\eqref{4..10} in the bootstrap hypothesis are consistent with the bound \eqref{3.521} we excepted.
\end{rem}
\begin{rem}
We recall that the initial value $u_{\mathrm{in}}$ in Theorem \ref{1..1} satisfies $\left\| u_{\mathrm{in}} \right\|_{H^{\sigma}} \leqslant \delta \nu$,
which implies the stability threshold index $\gamma=1$.
In contrast to the result with $\gamma=\frac{3}{2}$ of Bedrossian, Germain and Masmoudi \cite{MR3612004}, the threshold index in our paper is smaller due to rotation. Unfortunately, we can not take full advantage of  dispersion estimates of zero-frequency to decrease the threshold index. This is due to our concern on the closed estimate \eqref{4..8} in subsection \ref{sec6.1} when we need to deal with the term $(\neq,\neq)$ such as 
\begin{align}
	\mathcal{NLP}Q_{\neq}^{2}(U_{\neq}^{1}, U_{\neq}^{2})&=\int_{0}^{T}\int mM  \left\langle D \right\rangle^{N} Q_{\neq}^{2}mM  \left\langle D \right\rangle^{N}\partial_{Y}^{L}\left(\partial_{Y}^{L}U_{\neq}^{1}\partial_{X}U_{\neq}^{2}\right)_{\neq}dVdt\nonumber\\
	&\leqslant \left\|mM\nabla_{L}Q_{\neq}^{2}\right\|_{L^{2}H^{N}}\left(\left\|mM\nabla_{L}\check{K}_{\neq}^{2}\right\|_{L^{2}H^{N}}+\left\|mM\nabla_{L}Q_{\neq}^{2} \right\|_{L^{2}H^{N}}\right)\nonumber\\&\quad\times
	\left\|mMQ_{\neq}^{2}\right\|_{L^{\infty}H^{N}}\nonumber\\
	&\lesssim \varepsilon\nu^{-\frac{1}{2}} \varepsilon \nu^{-\frac{1}{2}} \varepsilon\nonumber 
	\\
	&\lesssim  
	\varepsilon^{2}(\varepsilon\nu^{-1})\nonumber\\
	&\lesssim
	\varepsilon^{2}.\nonumber
\end{align}
In a certain sense, the threshold index $\gamma=1$ is regarded as   optimal under the bootstrap framework we built.
\end{rem}

\begin{proof}
	For a detailed proof of Proposition \ref{pro4.1}, see the following sections \ref{sec5}--\ref{sec7}.
\end{proof}

With Proposition 5.1 at hand, we have the following proposition. 
\begin{pro}\label{pro4.2}
	Under the bootstrap hypotheses, for $\varepsilon \nu^{-1}$ sufficiently small, the following estimates hold:
	
	$(1)$ the zero frequency velocity $U_0^{i}$ $(i=1, 2, 3)$:
	\begin{align}
		\left\| U_0^1 \right\|_{L^{\infty} H^{N+1}}+ \nu^{\frac{1}{2}} \left\| \nabla U_0^1 \right\|_{L^{2} H^{N+1}} & \lesssim \varepsilon,     \label{4..16}\\
		\left\| U_0^3 \right\|_{L^{\infty} H^{N+1}}+ \nu^{\frac{1}{2}} \left\| \nabla U_0^3 \right\|_{L^{2} H^{N+1}} & \lesssim  \varepsilon,     \label{4..18} \\
		\left\| U_0^2 \right\|_{L^{\infty} H^{N+2}}+ \nu^{\frac{1}{2}} \left\| \nabla U_0^2 \right\|_{L^{2} H^{N+2}} & \lesssim  \varepsilon.    \label{4..17} 
	\end{align}
	
	$(2)$ the non-zero frequency velocity $U_{\neq}^i$ $(i=1, 2, 3)$:
	\begin{align}
		\left\| U_{\neq}^1 \right\|_{L^{\infty} H^N}+ \nu^{\frac{1}{2}} \left\| \nabla_{L} U_{\neq}^1 \right\|_{L^{2} H^N} + \left\| \sqrt{-\dot{M}M}  U_{\neq}^1  \right\|_{L^{2} H^N} & \lesssim \varepsilon,   \label{4..22}\\
		\left\|  U_{\neq}^2 \right\|_{L^{\infty} H^N}+ \nu^{\frac{1}{2}} \left\|  \nabla_{L}  U_{\neq}^2 \right\|_{L^{2} H^N}+ \left\| \sqrt{-\dot{M}M}  U_{\neq}^2  \right\|_{L^{2} H^N} & \lesssim \varepsilon,      \label{4..23}\\
		\left\|  U_{\neq}^3 \right\|_{L^{\infty} H^N}+ \nu^{\frac{1}{2}} \left\| \nabla_{L} U_{\neq}^3 \right\|_{L^{2} H^N}+ \left\| \sqrt{-\dot{M}M}  U_{\neq}^3  \right\|_{L^{2} H^N} & \lesssim \varepsilon.  \label{4..24}
	\end{align}
\end{pro}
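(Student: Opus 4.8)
The plan is to derive Proposition \ref{pro4.2} directly from the bootstrap bounds of Proposition \ref{pro4.1}: I would reconstruct each velocity component pointwise in frequency from $Q^2$, $\check{K}^2$ and from $\overline{U}_0^{1,3}$, and then transfer norms by elementary Fourier‑multiplier estimates, with no new energy estimate. For the non‑zero frequencies I would start from $Q^2=\Delta_L U^2$, i.e. $\widehat{U_{\neq}^2}=-p^{-1}\widehat{Q_{\neq}^2}$, together with the identities \eqref{3..4}--\eqref{3.5} expressing $\widehat{U_{\neq}^1}$, $\widehat{U_{\neq}^3}$ as bounded symbols acting on $\widehat{Q_{\neq}^2}$ and $K_{\neq}^2$ (note $|K_{\neq}^2|=|\widehat{\check{K}_{\neq}^2}|$). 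The crucial point is that the gain \eqref{lem4.1.11}, $m\gtrsim \sqrt{k^2+l^2}\,|k,\eta-kt,l|^{-1}$, together with the two‑sided bound \eqref{4.13} on $M$, yields the pointwise‑in‑frequency inequalities, valid for $k\neq 0$,
\[
\frac{|k(\eta-kt)|}{p}\lesssim m,\qquad \frac{|l|}{p^{1/2}}\lesssim m,\qquad p^{-1}\lesssim m,\qquad (p\,mM)^{-1}\lesssim 1,
\]
which exactly compensate the $\Delta_L^{-1}$‑type factors appearing in \eqref{3..4}--\eqref{3.5}. Substituting them (and using $|k,l|\geq 1$) gives, at every frequency and time, $|\widehat{U_{\neq}^i}|\lesssim mM\,(|\widehat{Q_{\neq}^2}|+|\widehat{\check{K}_{\neq}^2}|)$ and $|\widehat{\nabla_L U_{\neq}^i}|\lesssim mM\,(|\widehat{\nabla_L Q_{\neq}^2}|+|\widehat{\nabla_L \check{K}_{\neq}^2}|)$ for $i=1,2,3$, and, multiplying the first by the Fourier multiplier $\sqrt{-\dot{M}M}$, also $\sqrt{-\dot{M}M}\,|\widehat{U_{\neq}^i}|\lesssim \sqrt{-\dot{M}M}\,m\,(|\widehat{Q_{\neq}^2}|+|\widehat{\check{K}_{\neq}^2}|)$. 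Taking $\langle k,\eta,l\rangle^{N}$‑weighted $L^2$‑norms in frequency and then $L^\infty_t$, $L^2_t$ in time, the hypotheses \eqref{4..8}--\eqref{4..5} immediately give \eqref{4..22}--\eqref{4..24}; if preferred, the unweighted $L^2_t$ dissipation term can instead be routed through $\|mMQ_{\neq}^2\|_{L^2H^N}\lesssim\nu^{-1/6}\varepsilon$, which follows from \eqref{4..8} via Corollary \ref{cor4.1} applied to $mMQ_{\neq}^2$.

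For the zero frequencies, note first that $\overline{U}_0^2=0$ by incompressibility (as in \eqref{3.1.5}), so $U_0^2=\widetilde{U}_0^2$. Using $Q_0^2=\Delta_{y,z}U_0^2$, the relation $W_0^2=\partial_z U_0^1$ together with the definition \eqref{3.13}, and the incompressibility identity $\widetilde{U}_0^3=-\partial_z^{-1}\partial_y\widetilde{U}_0^2$, I would reconstruct, for $l\neq 0$,
\[
|\widehat{\widetilde{U}_0^2}|=|\eta,l|^{-2}\,|\widehat{Q_0^2}|,\qquad |\widehat{\widetilde{U}_0^1}|\lesssim |l|^{-1}|\eta,l|^{-1}\,|\widehat{\check{K}_0^2}|,\qquad |\widehat{\widetilde{U}_0^3}|\lesssim |l|^{-1}|\eta,l|^{-1}\,|\widehat{Q_0^2}|.
\]
Since $l\neq 0$ forces $\langle\eta,l\rangle\sim|\eta,l|$ and $|l|\geq 1$ on the support of $\widetilde{(\cdot)}$, these inverse elliptic operators gain two $\langle\eta,l\rangle$‑derivatives for $\widetilde{U}_0^2$ and one for $\widetilde{U}_0^{1,3}$; hence the $H^N$ bounds \eqref{4..4}, \eqref{4..7} on $Q_0^2$ and $\check{K}_0^2$ (and on their gradients) upgrade to $\|\widetilde{U}_0^2\|_{L^\infty H^{N+2}}+\nu^{1/2}\|\nabla\widetilde{U}_0^2\|_{L^2H^{N+2}}\lesssim\varepsilon$ and $\|\widetilde{U}_0^{1,3}\|_{L^\infty H^{N+1}}+\nu^{1/2}\|\nabla\widetilde{U}_0^{1,3}\|_{L^2H^{N+1}}\lesssim\varepsilon$. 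Adding the double‑zero bounds \eqref{4..9}--\eqref{4..10}, together with $\nabla\overline{U}_0^{1,3}=\partial_y\overline{U}_0^{1,3}$ (since $\overline{U}_0^{1,3}$ depends only on $y$), then yields \eqref{4..16}--\eqref{4..18}.

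This statement is essentially a bookkeeping corollary, so I do not expect a conceptual obstacle. The one thing demanding genuine care is that $m$ is bounded below only by $\nu^{1/3}$, so a careless estimate would leak a spurious factor $\nu^{-1/3}$: the whole content of the argument is that the gain \eqref{lem4.1.11} together with the two‑sided bounds \eqref{4.11}, \eqref{4.13} precisely absorbs the $\Delta_L^{-1}$‑losses in the reconstruction identities \eqref{3..4}--\eqref{3.5}, and that the restriction $l\neq 0$ converts the zero‑frequency elliptic inverses $\Delta_{y,z}^{-1}$, $\partial_z^{-1}$ into genuine $\langle\eta,l\rangle$‑scale derivative gains. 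Accordingly, the step I expect to be most delicate is verifying the four displayed symbol inequalities uniformly over all frequencies and over $\nu\in(0,1)$.
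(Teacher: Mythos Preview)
Your proposal is correct and follows essentially the same route as the paper: reconstruct each velocity component pointwise in frequency from $Q^2$ and $K^2$ via the identities \eqref{3..4}--\eqref{3.5} (and their $k=0$ counterparts), then use \eqref{lem4.1.11} together with $M\sim 1$ to absorb the $\Delta_L^{-1}$-type losses by the weight $mM$, and finally feed in the bootstrap hypotheses \eqref{4..8}--\eqref{4..10}. The paper's proof is slightly terser and, for \eqref{4..17}, writes $\|U_0^2\|_{H^{s+2}}\le \|Q_0^2\|_{H^s}+\|U_0^2\|_{L^2}$ and invokes both \eqref{4..4} and \eqref{4..12}, whereas you get the same bound purely from $Q_0^2$ via the elliptic gain on $l\neq 0$; both arguments are valid.
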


\begin{proof}
	For any $1<s \leqslant N$, it holds
	\begin{align}
		\left\|  U_0^2  \right\|_{H^{s+2}} \leqslant \left\|  \Delta U_0^2  \right\|_{H^{s}}+\left\| U_0^2   \right\|_{L^2}=\left\|  Q_0^2  \right\|_{H^{s}}+\left\| U_0^2   \right\|_{L^2}.\nonumber
	\end{align}
	Therefore, by the bootstrap hypotheses \eqref{4..4} and \eqref{4..12}, we infer that \eqref{4..17} holds. Similarly, other estimates on the zero frequency velocity follow from   \eqref{3..4}, \eqref{3.5}  and the bootstrap hypotheses.
	
	From \eqref{3..4} and Plancherel's theorem,   for $|k| \geqslant 1$ we easily have
	\begin{align}
	\left|  \widehat{U_{\neq}^{1}}  \right|&\leqslant \left||k,l|^{-2}\frac{\sqrt{k^{2}+l^{2}}}{\sqrt{k^{2}+(\eta-kt)^{2}+l^{2}}} \frac{\sqrt{k^{2}+(\eta-kt)^{2}+l^{2}}}{\sqrt{k^{2}+l^{2}}}\frac{k(\eta-kt)}{k^{2}+(\eta-kt)^{2}+l^{2}} \widehat{Q^{2}_{\neq}}\right|\nonumber\\&\quad
	+\left|\sqrt{\frac{\beta-1}{\beta}}|k,l|^{-2}\frac{\sqrt{k^{2}+l^{2}}}{\sqrt{k^{2}+(\eta-kt)^{2}+l^{2}}}\frac{\sqrt{k^{2}+(\eta-kt)^{2}+l^{2}}}{\sqrt{k^{2}+l^{2}}}\frac{l}{|k,\eta-kt,l|}K_{\neq}^{2}\right|\nonumber\\& \lesssim \left|  m {K}_{\neq}^{2}  \right| + \left|  m \widehat{Q_{\neq}^{2}}  \right|,\nonumber 
	\end{align} 
which together with \eqref{lem4.1.11}  and \eqref{3.5}  implies
	\begin{align*}
		&\left|  \widehat{U_{\neq}^{2}}  \right| \lesssim \frac{1}{\sqrt{k^2+l^2}} \, p^{-\frac{1}{2}} \left|  m \widehat{Q_{\neq}^{2}}  \right|\lesssim  \left|  m \widehat{Q_{\neq}^{2}}  \right|,\\
		&\left|  \widehat{U_{\neq}^{3}}  \right| \lesssim \left|  m {K}_{\neq}^{2}  \right| + \left|  m \widehat{Q_{\neq}^{2}}  \right|.
	\end{align*}
Hence, \eqref{4..22}--\eqref{4..24} also follow from the bootstrap hypotheses  \eqref{4..8} and \eqref{4..5}. 
\end{proof}

%
%
%

In fact, Theorem \ref{1..1} can be directly derived from Proposition \ref{pro4.2}. Therefore, we only need to prove Proposition \ref{pro4.1} and its proof will be given in the following section.
\section{Energy estimates on $Q_{\neq}^{ 2}, \check{K}_{\neq}^{2}$  and $Q_{0}^{ 2}, \check{K}_{0}^{2}$}  \label{sec5}
In this section, we aim to prove that under the bootstrap hypotheses of Proposition \ref{pro4.1}, the estimates on  $Q_{\neq}^{ 2}, \check{K}_{\neq}^{2}$  and $Q_{0}^{ 2}, \check{K}_{0}^{2}$  hold (i.e., \eqref{4..8}--\eqref{4..7}), with 10 replaced by 5 on the right-hand side.

\subsection{$H^{N}$ estimates on  $Q_{\neq}^2$  and $\check{K}_{\neq}^2$}\label{sec6.1}
First, by \eqref{HSX88} and \eqref{HSX99}, we  give the equations for $Q_{\neq}^2$ and $\check{K}_{\neq}^2$ as follows: 
\begin{align}\label{5.1}
	\begin{cases}
		\partial_{t} Q_{\neq}^{2}-\nu \Delta_{L} Q_{\neq}^{2}+i \sqrt{\beta (\beta-1)} \partial_{Z} |\nabla_{L}|^{-1} \check{K}_{\neq}^{2}\\
		\quad  =-\left(Q\cdot \nabla_{L}U^{2}\right)_{\neq}-\left(U \cdot\nabla_{L}Q^{2}\right)_{\neq}-2\left(\partial_{i}^{L}U^{j}\partial_{ij}^{L}U^{2}\right)_{\neq}+\partial_{Y}^{L}\left(\partial_{i}^{L}U^{j}\partial_{j}^{L}U^{i} \right)_{\neq},  \\
		\partial_{t} \check{K}_{\neq}^2-\nu \Delta_{L} \check{K}_{\neq}^2+\partial_{XY}^{L}\Delta_{L}^{-1}\check{K}_{\neq}^{2}-i\sqrt{\beta(\beta-1)}\partial_{Z}|\nabla_{L}|^{-1}Q^{2}_{\neq} \\
		\quad  =i\sqrt{\frac{\beta}{\beta-1}}\partial_{Z} |\nabla_{L}| \left(U\cdot\nabla_{L}U^{1} \right)_{\neq}-i\sqrt{\frac{\beta}{\beta-1}}\partial_{X}|\nabla_{L}| \left(U\cdot \nabla_{L}U^{3}\right)_{\neq}.
	\end{cases}
\end{align}
An energy estimate yields
\begin{align}\label{5.2}
	&\frac{1}{2}\left(\left\|mMQ_{\neq}^{2}\right\|_{H^{N}}^{2}+ \left\|mM\check{K}_{\neq}^{2}\right\|_{H^{N}}^{2}\right)+\left\| \left( \sqrt{-\dot{M}M}mQ_{\neq}^{2}, \sqrt{-\dot{M}M}m\check{K}_{\neq}^{2} \right)\right\|_{L^{2}H^{N}}^{2}\nonumber\\&\qquad+
	\left\|\left( \sqrt{-\dot{m}m}MQ_{\neq}^{2}, \sqrt{-\dot{m}m}M\check{K}_{\neq}^{2} \right)\right\|_{L^{2}H^{N}}^{2}+\nu \left(\left\|mM\nabla_{L}Q_{\neq}^{2}\right\|_{L^{2}H^{N}}^{2}+\left\|mM\nabla_{L}\check{K}_{\neq}^{2}\right\|_{L^{2}H^{N}}^{2} \right)\nonumber \\&\quad=\frac{1}{2}\left\|\left(mMQ_{\neq}^{2}(0), mM\check{K}_{\neq}^{2}(0) \right)\right\|_{H^{N}}^{2}-\int_{0}^{T}\int mM	 \left\langle D \right\rangle^{N}\check{K}_{\neq}^{2}mM\left\langle D \right\rangle^{N}\partial_{XY}^{L}\Delta_{L}^{-1}\check{K}_{\neq}^{2}dVdt\nonumber\\
	&\qquad+ \int_{0}^{T}\int mM\left\langle D \right\rangle^{N}Q_{\neq}^{2}mM\left\langle D \right\rangle^{N}\left[\partial_{Y}^{L}\left(\partial_{i}^{L}U^{j}\partial_{j}^{L}U^{i} \right)_{\neq}-\Delta_{L}\left(U\cdot \nabla_{L} U^{2}\right)_{\neq} \right]dVdt\nonumber\\
	& \qquad+\int_{0}^{T}\int mM\left\langle D \right\rangle^{N}\check{K}_{\neq}^{2}mM\left\langle D \right\rangle^{N}i\sqrt{\frac{\beta}{\beta-1}}\Big[  \partial_{Z}|\nabla_{L}|\left(U\cdot\nabla_{L}U^{1}\right)_{\neq} \nonumber \\
	&\quad\quad- \partial_{X}|\nabla_{L}|\left(U\cdot \nabla_{L}U^{3}\right)_{\neq}  \Big] dVdt\nonumber\\
	&\quad\triangleq \frac{1}{2}\left\|\left(mMQ_{\neq}^{2}(0), mM\check{K}_{\neq}^{2}(0) \right)\right\|_{H^{N}}^{2}+\mathcal{LS}\check{K}_{\neq}^{2}+\mathcal{NLP}Q_{\neq}^{2}+\mathcal{T}Q_{\neq}^{2}+\mathcal{T}\check{K}_{\neq}^{2}1+\mathcal{T}K_{\neq}^{2}2,
\end{align}
where we have used the following fact
\begin{align*}
	&\int mM \left\langle D \right\rangle^{N} Q_{\neq}^{2}mM\left\langle D \right\rangle^{N}\partial_{t}Q_{\neq}^{2}dV+\int mM\left\langle D \right\rangle^{N}\check{K}_{\neq}^{2}mM\left\langle D \right\rangle^{N}\partial_{t}\check{K}_{\neq}^{2}dV \\
	&\quad=\frac{1}{2}\frac{d}{dt}\left(\left\|mMQ_{\neq}^{2}\right\|_{H^{N}}^{2}+\left\|mM\check{K}_{\neq}^{2}\right\|_{H^{N}}^{2}\right)+\left\|\sqrt{-\dot{m}m}MQ_{\neq}^{2}\right\|_{H^{N}}^2 \\&\qquad+
	\left\|\sqrt{-\dot{m}m}M\check{K}_{\neq}^{2}\right\|_{H^{N}}^{2}+\left\|\sqrt{-\dot{M}M}mQ_{\neq}^{2}\right\|_{H^{N}}^{2}+\left\|\sqrt{-\dot{M}M}m\check{K}_{\neq}^{2}\right\|_{H^{N}}^{2}.
\end{align*}

Now  we are ready to estimate each term on the right-hand side of \eqref{5.2}. For the linear stretching term  $\mathcal{LS}\check{K}_{\neq}^{2}$, by the definition of $m$, H\"older's inequality, it can be bounded as
\begin{align} \label{5.4}
	\mathcal{LS}\check{K}_{\neq}^{2}&=-\int_{0}^{T}\int mM \left\langle D \right\rangle^{N}\check{K}_{\neq}^{2}mM \left\langle D \right\rangle^{N}\partial_{XY}^{L}\Delta_{L}^{-1}\check{K}_{\neq}^{2}dVdt\nonumber\\&=
	\int_{0}^{T}\int mM\widehat{ \left\langle D \right\rangle^{N}}K_{\neq}^{2}mM \widehat{\left\langle D \right\rangle^{N}}\frac{-k(\eta-k t)}{k^{2}+(\eta-kt)^{2}+l^{2}}K_{\neq}^{2}d\xi dt\nonumber\\&=
	\int_{0}^{T}\int mM\widehat{\left\langle D \right\rangle^{N}}K_{\neq}^{2}mM\widehat{\left\langle D \right\rangle^{N}}\frac{-k(\eta-kt)}{k^{2}+(\eta-kt)^{2}+l^{2}}K_{\neq}^{2}\cdot 1_{\left\{t: \, 0<t<\frac{\eta}{k}\right\}}d\xi dt\nonumber\\&\quad+
	\int_{0}^{T}\int mM\widehat{\left\langle D \right\rangle^{N}}K_{\neq}^{2}mM\widehat{\left\langle D \right\rangle^{N}}\frac{-\dot{m}}{m}K_{\neq}^{2}\cdot 1_ { \left\{t: \, \frac{\eta}{k}\leqslant t \leqslant  \frac{\eta}{k}+1000\nu^{-\frac{1}{3}} \right\}}d\xi dt\nonumber\\
	&\quad+\int_{0}^{T}\int mM\widehat{\left\langle D \right\rangle^{N}}K_{\neq}^{2}mM\widehat{\left\langle D \right\rangle^{N}}\frac{-k(\eta-kt)}{k^{2}+(\eta-kt)^{2}+l^{2}}K_{\neq}^{2}\cdot 1_{\left\{t: \, t>\frac{\eta}{k}+1000\nu^{-\frac{1}{3}}\right\}}d\xi dt\nonumber\\&\leqslant
	\left\|\sqrt{-\dot{m}m}M\check{K}_{\neq}^{2}\right\|_{L^{2}H^{N}}^{2}+\frac{\nu}{2}\left\|mM\nabla_{L}\check{K}_{\neq}^{2}\right\|_{L^{2}H^{N}}^{2},
\end{align} 
where in the last inequality  we have used \eqref{4...3}. Indeed,   dissipation overcomes  stretching if $ t>\frac{\eta}{k}+1000  \nu^{-\frac{1}{3}}$ and $\frac{1}{2}$ in the last term as in \eqref{5.4} can be chosen.  

Before estimating $\mathcal{NLP}Q_{\neq}^{2}$, we introduce the following fact, which will be frequently used below. By the definition of $K^{2}$ as in \eqref{3.13}, we obtain that $U^i$ $(i=1, 2, 3)$ satisfies the  following inequality deduced by \eqref{lem4.1.11}, \eqref{3..4} and \eqref{3.5} 
\begin{align}\label{5.15}
	\begin{cases}
		\big|\widehat{U_{\neq}^{1}}\big|\leqslant \sqrt{\dfrac{\beta-1}{\beta}}|k,l|^{-2}\dfrac{|l|}{\sqrt{k^{2}+l^{2}}}\dfrac{\sqrt{k^{2}+l^{2}}}{\sqrt{k^{2}+(\eta-kt)^{2}+l^{2}}}\big|K_{\neq}^{2}\big|\\ \qquad\quad+|k,l|^{-2}\dfrac{|\eta-kt|}{|k,\eta-kt,l|}\dfrac{|k|}{\sqrt{k^{2}+l^{2}}}\dfrac{\sqrt{k^{2}+l^{2}}}{\sqrt{k^{2}+(\eta-kt)^{2}+l^{2}}}\big|\widehat{Q_{\neq}^{2}}\big|\\ 
		\qquad \leqslant C |k,l|^{-2}\left(\big|m M K_{\neq}^{2}\big|+\big|m M\widehat{Q_{\neq}^{2}}\big|\right),\\
		\big|\widehat{U_{\neq}^{3}}\big|\leqslant \sqrt{\dfrac{\beta-1}{\beta}}|k,l|^{-2} \dfrac{|k|}{\sqrt{k^{2}+l^{2}}}\dfrac{\sqrt{k^{2}+l^{2}}}{\sqrt{k^{2}+(\eta-kt)^{2}+l^{2}}}\big|K_{\neq}^{2}\big|  \\ 
		\qquad\quad+|k,l|^{-2}\dfrac{|\eta-kt|}{|k,\eta-kt,l|}\dfrac{|l|}{\sqrt{k^{2}+l^{2}}}\dfrac{\sqrt{k^{2}+l^{2}}}{\sqrt{k^{2}+(\eta-kt)^{2}+l^{2}}}\big|\widehat{Q_{\neq}^{2}}\big|\\
		\qquad\leqslant C |k,l|^{-2} \left(\big|m MK_{\neq}^{2}\big|+\big|m M\widehat{Q_{\neq}^{2}}\big|\right), \\
		\left|  \widehat{U_{\neq}^{2}}  \right| \leqslant C \dfrac{1}{\sqrt{k^2+l^2}} \, \dfrac{1}{\sqrt{k^2+(\eta-kt)^2+l^2}} \left|  m M \widehat{Q_{\neq}^{2}}  \right|. 
	\end{cases}
\end{align}

For the nonlinear pressure term $\mathcal{NLP}Q_{\neq}^{2}$, now we split this into there parts
\begin{align}\label{5.5}
	\mathcal{NLP}Q_{\neq}^{2}&=\int_{0}^{T}\int  mM \left\langle D \right\rangle^{N}Q_{\neq}^{2} mM  \left\langle D \right\rangle^{N}\partial_{Y}^{L}\left(\partial_{i}^{L}U^{j}\partial_{j}^{L}U^{i} \right)_{\neq}dVdt\nonumber\\&=
	\int_{0}^{T}\int mM \left\langle D \right\rangle^{N}
	Q_{\neq}^{2} mM  \left\langle D \right\rangle^{N}\partial_{Y}^{L}\Big[\partial_{i} U_{0}^{j}\partial_{j}^{L}U_{\neq}^{i} \nonumber\\&\quad+\partial_{i}^{L}U_{\neq}^{j}\partial_{j}U_{0}^{i}+\partial_{i}^{L}U_{\neq}^{j}\partial_{j}^{L}U_{\neq}^{i}  \Big]_{\neq}dVdt\nonumber\\
	&\triangleq\mathcal{NLP}Q_{\neq}^{2}(U_{0},U_{\neq})+\mathcal{NLP}Q_{\neq}^{2}(U_{\neq},U_{0})+\mathcal{NLP}Q_{\neq}^{2}(U_{\neq}, U_{\neq}).
\end{align}
We proceed with the term $\mathcal{NLP}Q_{\neq}^{2}(U_{0},U_{\neq})$. It can be divided into 
\begin{align}\label{5.6}
	\mathcal{NLP}Q_{\neq}^{2}(U_{0},U_{\neq})&=\int_{0}^{T} \int mM  \left\langle D \right\rangle^{N} Q_{\neq}^{2} mM  \left\langle D \right\rangle^{N}\partial_{Y}^{L}\left(\partial_{i} U_{0}^{j}\partial_{j}^{L}U_{\neq}^{i} \right)_{\neq} \cdot 1_{i\neq 1}dVdt\nonumber\\
	&=\int_{0}^{T}\int mM  \left\langle D \right\rangle^{N} Q_{\neq}^{2} mM  \left\langle D \right\rangle^{N}\partial_{Y}^{L}\Big[ \partial_{Y} U_{0}^{1}\partial_{X}U_{\neq}^{2}+\partial_{Y} U_{0}^{2}\partial_{Y}^{L}U_{\neq}^{2}\nonumber\\
	&\quad+\partial_{Y} U_{0}^{3}\partial_{Z}U_{\neq}^{2}+\partial_{Z}U_{0}^{1}\partial_{X}U_{\neq}^{3}+\partial_{Z}U_{0}^{2}\partial_{Y}U_{\neq}^{3}+\partial_{Z}U_{0}^{3}\partial_{Z}U_{\neq}^{3}\Big]_{\neq} dVdt\nonumber\\
	&\triangleq 
	\mathcal{NLP}Q_{\neq}^{2}(U_{0}^{1},U_{\neq}^{2})+\mathcal{NLP}Q_{\neq}^{2}(U_{0}^{2},U_{\neq}^{2})+\mathcal{NLP}Q_{\neq}^{2}(U_{0}^{3},U_{\neq}^{2})\nonumber\\
	&\quad+\mathcal{NLP}Q_{\neq}^{2}(U_{0}^{1},U_{\neq}^{3})+\mathcal{NLP}Q_{\neq}^{2}(U_{0}^{2},U_{\neq}^{3})+\mathcal{NLP}Q_{\neq}^{2}(U_{0}^{3},U_{\neq}^{3}).
\end{align} 
On the one hand, by the same argument of \eqref{5.15},  we have
\begin{align}\label{5..102}
	\left|\widehat{\partial_{X} U_{\neq}^{2}}, \widehat{\partial_{Y}^{L} U_{\neq}^{2}}, \widehat{\partial_{Z} U_{\neq}^{2}}\right|\leqslant C\left|mM\widehat{Q_{\neq}^{2}}\right|.
\end{align}  
With \eqref{2.3}, \eqref{4..8}, \eqref{4..16}, the fact \eqref{5..102} and  Corollary \ref{cor4.1} at hand, we conclude that
\begin{align} 
	\mathcal{NLP}Q_{\neq}^{2}(U_{0}^{1},U_{\neq}^{2})&=\int_{0}^{T}\int  mM  \left\langle D \right\rangle^{N} Q_{\neq}^{2} mM  \left\langle D \right\rangle^{N} \partial_{Y}^{L}\left(\partial_{Y} U_{0}^{1}\partial_{X}U_{\neq}^{2} \right)_{\neq}dVdt\nonumber\\
	&\leqslant
	\left\|mM\nabla_{L}Q_{\neq}^{2}\right\|_{L^{2}H^{N}}\left\|U_{0}^{1}\right\|_{L^{\infty}H^{N+1}}\left\|\partial_{X} U_{\neq}^{2}\right\|_{L^{2}H^{N}}\nonumber\\
	&\leqslant
	\left\|mM\nabla_{L}Q_{\neq}^{2}\right\|_{L^{2}H^{N}}\left\|U_{0}^{1}\right\|_{L^{\infty}H^{N+1}}\left\| mM Q_{\neq}^{2}\right\|_{L^{2}H^{N}}\nonumber\\
	&\lesssim
	\varepsilon\nu^{-\frac{1}{2}}\varepsilon\varepsilon\nu^{-\frac{1}{6}}\nonumber\\
	&\lesssim \varepsilon^{2}(\varepsilon\nu^{-\frac{2}{3}})\nonumber\\
	&\lesssim \varepsilon^{2},
\end{align} 
where $\varepsilon\nu^{-\frac{2}{3}}$ sufficiently small. Using  \eqref{2.3},  \eqref{4..8}, \eqref{4..18}, \eqref{4..17}, \eqref{5..102} and Corollary \ref{cor4.1}, the term $\mathcal{NLP}Q_{\neq}^{2}(U_{0}^{2}, U_{\neq}^{2})$  satisfies
\begin{align}
	\mathcal{NLP}Q_{\neq}^{2}(U_{0}^{2},U_{\neq}^{2})&=\int_{0}^{T}\int mM  \left\langle D \right\rangle^{N} Q_{\neq}^{2} mM  \left\langle D \right\rangle^{N} \partial_{Y}\left(\partial_{Y} U_{0}^{2}\partial_{Y}^{L}U_{\neq}^{2} \right)_{\neq}dVdt\nonumber\\
	&\leqslant\left\|mM\nabla_{L}Q_{\neq}^{2}\right\|_{L^{2}H^{N}}\left\|U_{0}^{2}\right\|_{L^{\infty}H^{N+1}}\left\|\partial_{Y}^{L}U_{\neq}^{2}\right\|_{L^{2}H^{N}}\nonumber\\
	&\leqslant\left\|mM\nabla_{L}Q_{\neq}^{2}\right\|_{L^{2}H^{N}}\left\|U_{0}^{2}\right\|_{L^{\infty}H^{N+1}}\left\|mM Q_{\neq}^{2}\right\|_{L^{2}H^{N}}\nonumber\\
	&\lesssim \varepsilon\nu^{-\frac{1}{2}}\varepsilon \varepsilon\nu^{-\frac{1}{6}}\nonumber\\
	&\lesssim \varepsilon^{2}(\varepsilon\nu^{-\frac{2}{3}})\nonumber\\
	&\lesssim \varepsilon^{2}.
\end{align} 
The term $\mathcal{NLP}Q_{\neq}^{2}(U_{0}^{3}, U_{\neq}^{2})$  can be bounded as
\begin{align}
	\mathcal{NLP}Q_{\neq}^{2}(U_{0}^{3}, U_{\neq}^{2})&=\int_{0}^{T}\int  mM  \left\langle D \right\rangle^{N} Q_{\neq}^{2} mM  \left\langle D \right\rangle^{N} \partial_{Y}^{L}\left(\partial_{Y} U_{0}^{3}\partial_{Z}U_{\neq}^{2} \right)_{\neq}dVdt\nonumber\\
	&\leqslant\left\|mM\nabla_{L}Q_{\neq}^{2}\right\|_{L^{2}H^{N}}\left\|U_{0}^{3}\right\|_{L^{\infty}H^{N+1}}\left\|\partial_{Z}U_{\neq}^{2}\right\|_{L^{2}H^{N}}\nonumber\\
	&\leqslant\left\|mM\nabla_{L}Q_{\neq}^{2}\right\|_{L^{2}H^{N}}\left\|U_{0}^{3}\right\|_{L^{\infty}H^{N+1}}\left\|mM Q_{\neq}^{2}\right\|_{L^{2}H^{N}}\nonumber\\
	&\lesssim \varepsilon\nu^{-\frac{1}{2}}\varepsilon \varepsilon\nu^{-\frac{1}{6}}\nonumber\\
	&\lesssim \varepsilon^{2}(\varepsilon\nu^{-\frac{2}{3}})\nonumber\\
	&\lesssim \varepsilon^{2},
\end{align}
where $\varepsilon \nu^{-\frac{2}{3}}$ is chosen small enough. 
On the other hand, in analogy with \eqref{5.15}, we can easily infer that 
\begin{align}\label{5.16}
	\left|\widehat{\partial_{X}U_{\neq}^{1,3}}, \widehat{\partial_{Z}U_{\neq}^{1,3}} \right|\leqslant C\left(\left|mM K_{\neq}^{2}\right|+\left|mM\widehat{Q_{\neq}^{2}}\right|\right).
\end{align}  
Now consider the fourth term appearing in \eqref{5.6}. By using  \eqref{2.3}, \eqref{4..8}, \eqref{4..5}, \eqref{4..16}, \eqref{5.16} and  Corollary \ref{cor4.1}, it yields
\begin{align}
	\mathcal{NLP}Q_{\neq}^{2}(U_{0}^{1}, U_{\neq}^{3})&=\int_{0}^{T}\int mM  \left\langle D \right\rangle^{N} Q_{\neq}^{2}mM  \left\langle D \right\rangle^{N}\partial_{Y}^{L}\left(\partial_{Z}U_{0}^{1}\partial_{X}U_{\neq}^{3}\right)_{\neq}dVdt\nonumber\\
	&\leqslant
	\left\|mM\nabla_{L}Q_{\neq}^{2}\right\|_{L^{2}H^{N}}\left\|U_{0}^{1}\right\|_{L^{\infty}H^{N+1}}\left\|\partial_{X}U_{\neq}^{3}\right\|_{L^{2}H^{N}}\nonumber\\
	&\leqslant
	\left\|mM\nabla_{L}Q_{\neq}^{2}\right\|_{L^{2}H^{N}}\left\|U_{0}^{1}\right\|_{L^{\infty}H^{N+1}} \left(\left\| mM Q_{\neq}^{2}\right\|_{L^{2}H^{N}}+ \left\| m M \check{K}_{\neq}^2 \right\|_{L^{2}H^{N}}\right)\nonumber\\
	&
	\lesssim \varepsilon\nu^{-\frac{1}{2}}\varepsilon \varepsilon\nu^{-\frac{1}{6}}\nonumber\\
	&\lesssim \varepsilon^{2}(\varepsilon\nu^{-\frac{2}{3}})\nonumber\\
	&\lesssim
	\varepsilon^{2}.
\end{align}
For $\mathcal{NLP}Q_{\neq}^{2}(U_{0}^{2}, U_{\neq}^{3})$ and  $\mathcal{NLP}Q_{\neq}^{2}(U_{0}^{3}, U_{\neq}^{3})$,   by utilizing   \eqref{4..8}, \eqref{4..5}, \eqref{4..18}, \eqref{4..17}, \eqref{4..24}, \eqref{5.16} and  Corollary \ref{cor4.1}, we have
\begin{align}
	\mathcal{NLP}Q_{\neq}^{2}(U_{0}^{2}, U_{\neq}^{3})&=
	\int_{0}^{T}\int  mM  \left\langle D \right\rangle^{N} Q_{\neq}^{2} mM  \left\langle D \right\rangle^{N}\partial_{Y}^{L}\left(\partial_{Z}U_{0}^{2}\partial_{Y}^{L}U_{\neq}^{3} \right)_{\neq}dVdt\nonumber\\
	&\leqslant
	\left\| mM \nabla_{L} Q_{\neq}^{2}\right\|_{L^{2}H^{N}}\left\|U_{0}^{2}\right\|_{L^{\infty}H^{N+1}}\left\|\nabla_{L} U_{\neq}^{3}\right\|_{L^{2}H^{N}}\nonumber\\
	&\lesssim \varepsilon\nu^{-\frac{1}{2}}\varepsilon \varepsilon\nu^{-\frac{1}{2}}\nonumber\\
	&\lesssim \varepsilon^{2}(\varepsilon\nu^{-1})\nonumber\\
	&\lesssim \varepsilon^{2}
\end{align}
and 
\begin{align}
	\mathcal{NLP}Q_{\neq}^{2}(U_{0}^{3}, U_{\neq}^{3})&=\int_{0}^{T}\int mM  \left\langle D \right\rangle^{N} Q_{\neq}^{2}mM  \left\langle D \right\rangle^{N} \partial_{Y}^{L}(\partial_{Z}U_{0}^{3}\partial_{Z}U_{\neq}^{3})_{\neq}dVdt\nonumber\\
	&\leqslant 
	\left\|mM\nabla_{L}Q_{\neq}^{2}\right\|_{L^{2}H^{N}}\left\|U_{0}^{3}\right\|_{L^{\infty}H^{N+1}}\left\|\partial_{Z}U_{\neq}^{3}\right\|_{L^{2}H^{N}}\nonumber\\
	&\leqslant 
	\left\|mM\nabla_{L}Q_{\neq}^{2}\right\|_{L^{2}H^{N}}\left\|U_{0}^{3}\right\|_{L^{\infty}H^{N+1}} \left(\left\| mM Q_{\neq}^{2}\right\|_{L^{2}H^{N}}+ \left\| m M \check{K}_{\neq}^2 \right\|_{L^{2}H^{N}}\right)   \nonumber\\
	&\lesssim \varepsilon\nu^{-\frac{1}{2}}\varepsilon \varepsilon\nu^{-\frac{1}{6}} \nonumber\\
	&\lesssim \varepsilon^{2}(\varepsilon\nu^{-\frac{2}{3}})\nonumber\\
	&\lesssim \varepsilon^{2},
\end{align}
where $\varepsilon\nu^{-1}$ sufficiently small. Similarly, we easily deduce that
\begin{align}
	\mathcal{NLP}Q_{\neq}^{2}(U_{\neq},U_{0})\lesssim \varepsilon^{2}(\varepsilon\nu^{-1})\lesssim
	\varepsilon^{2}.
\end{align} 

However, the  term $\mathcal{NLP}Q_{\neq}^{2}(U_{\neq}, U_{\neq})$  is much trickier to deal with. To do this, we further decompose it into some parts as follows:
\begin{align}\label{5.13}
	\mathcal{NLP}Q_{\neq}^{2}(U_{\neq}, U_{\neq})&=\int_{0}^{T}\int
	mM  \left\langle D \right\rangle^{N} Q_{\neq}^{2} mM  \left\langle D \right\rangle^{N} \partial_{Y}^{L}\left(\partial_{i}^{L}U_{\neq}^{j}\partial_{j}^{L}U_{\neq}^{i}\right)_{\neq}dVdt\nonumber\\
	&=\int_{0}^{T}\int mM  \left\langle D \right\rangle^{N}Q_{\neq}^{2}mM  \left\langle D \right\rangle^{N} \partial_{Y}^{L}\left(\partial_{Y}^{L}U_{\neq}^{1}\partial_{X}U_{\neq}^{2} \right)_{\neq}dVdt\nonumber\\&\quad+
	\int_{0}^{T}\int mM  \left\langle D \right\rangle^{N}Q_{\neq}^{2} mM  \left\langle D \right\rangle^{N}\partial_{Y}^{L}\left(\partial_{X}U_{\neq}^{1}\partial_{X}U_{\neq}^{1}+\partial_{Z}U_{\neq}^{1}\partial_{X}U_{\neq}^{3} \right)_{\neq}dVdt\nonumber\\&\quad+
	\int_{0}^{T}\int  mM  \left\langle D \right\rangle^{N}Q_{\neq}^{2}
	mM  \left\langle D \right\rangle^{N}\partial_{Y}^{L}\left(\partial_{Y}^{L}U_{\neq}^{2}\partial_{Y}^{L}U_{\neq}^{2}\right)_{\neq}dVdt\nonumber\\&\quad+\int_{0}^{T}\int  mM  \left\langle D \right\rangle^{N}Q_{\neq}^{2}  mM  \left\langle D \right\rangle^{N} \partial_{Y}^{L}\left(\partial_{X}U_{\neq}^{2}\partial_{Y}^{L}U_{\neq}^{1}+\partial_{Z}U_{\neq}^{2}\partial_{Y}^{L}U_{\neq}^{3} \right)_{\neq}dVdt\nonumber\\&\quad+
	\int_{0}^{T}\int  mM  \left\langle D \right\rangle^{N}Q_{\neq}^{2}  mM  \left\langle D \right\rangle^{N} \partial_{Y}^{L}\left(\partial_{Y}^{L}U_{\neq}^{3}\partial_{Z}U_{\neq}^{2} \right)_{\neq}dVdt\nonumber\\&\quad+
	\int_{0}^{T}\int  mM  \left\langle D \right\rangle^{N}Q_{\neq}^{2}  mM  \left\langle D \right\rangle^{N} \partial_{Y}^{L}\left(\partial_{X}U_{\neq}^{3}\partial_{Z}U_{\neq}^{1}+\partial_{Z}U_{\neq}^{3}\partial_{Z}U_{\neq}^{3}\right)_{\neq}dVdt\nonumber\\&\triangleq
	\mathcal{NLP}Q_{\neq}^{2}(U_{\neq}^{1},U_{\neq}^{2})+ \mathcal{NLP}Q_{\neq}^{2}(U_{\neq}^{1},U_{\neq}^{1,3})+ \mathcal{NLP}Q_{\neq}^{2}(U_{\neq}^{2},U_{\neq}^{2})
	\nonumber\\&\quad+
	\mathcal{NLP}Q_{\neq}^{2}(U_{\neq}^{2},U_{\neq}^{1,3})+
	\mathcal{NLP}Q_{\neq}^{2}(U_{\neq}^{3},U_{\neq}^{2})+ \mathcal{NLP}Q_{\neq}^{2}(U_{\neq}^{3},U_{\neq}^{1,3}).
\end{align}
We are now in a position of estimating each term on the right-hand side of \eqref{5.13}  based on Propositions \ref{pro4.1} and \ref{pro4.2}.  
Indeed, for $\mathcal{NLP}Q_{\neq}^{2}(U_{\neq}^{1},U_{\neq}^{2})$, it allows us to conclude from \eqref{4..8}, \eqref{4..5}, \eqref{5.15} and the fact \eqref{5..102} that
\begin{align}
	\mathcal{NLP}Q_{\neq}^{2}(U_{\neq}^{1}, U_{\neq}^{2})&=\int_{0}^{T}\int mM  \left\langle D \right\rangle^{N} Q_{\neq}^{2}mM  \left\langle D \right\rangle^{N}\partial_{Y}^{L}\left(\partial_{Y}^{L}U_{\neq}^{1}\partial_{X}U_{\neq}^{2}\right)_{\neq}dVdt\nonumber\\
	&\leqslant \left\|mM\nabla_{L}Q_{\neq}^{2}\right\|_{L^{2}H^{N}}\left(\left\|mM\nabla_{L}\check{K}_{\neq}^{2}\right\|_{L^{2}H^{N}}+\left\|mM\nabla_{L}Q_{\neq}^{2} \right\|_{L^{2}H^{N}}\right)
	\left\|mMQ_{\neq}^{2}\right\|_{L^{\infty}H^{N}}\nonumber\\
	&\lesssim \varepsilon\nu^{-\frac{1}{2}} \varepsilon \nu^{-\frac{1}{2}} \varepsilon\nonumber 
	\\
	&\lesssim  
	\varepsilon^{2}(\varepsilon\nu^{-1})\nonumber\\
	&\lesssim
	\varepsilon^{2},
\end{align}
where $\varepsilon\nu^{-1}$ is chosen small enough. 
And we apply  \eqref{4..8}, \eqref{4..5}, Corollary \ref{cor4.1} and \eqref{5.16} to obtain
\begin{align}
	\mathcal{NLP}Q_{\neq}^{2}(U_{\neq}^{1}, U_{\neq}^{1,3})&=\int_{0}^{T}\int mM  \left\langle D \right\rangle^{N} Q_{\neq}^{2} mM  \left\langle D \right\rangle^{N} \partial_{Y}^{L}\left(\partial_{X}U_{\neq}^{1}\partial_{X}U_{\neq}^{1}+\partial_{Z}U_{\neq}^{1}\partial_{X}U_{\neq}^{3} \right)_{\neq}dVdt\nonumber\\
	&=-\int_{0}^{T}\int mM  \left\langle D \right\rangle^{N}\partial_{Y}^{L}Q_{\neq}^{2} mM  \left\langle D \right\rangle^{N}\left(\partial_{X}U_{\neq}^{1}\partial_{X}U_{\neq}^{1}+\partial_{Z}U_{\neq}^{1}\partial_{X}U_{\neq}^{3}\right)_{\neq}dVdt\nonumber\\
	&\leqslant
	\left\|mM\nabla_{L}Q_{\neq}^{2}\right\|_{L^{2}H^{N}}\left(\left\|mM\check{K}_{\neq}^{2}\right\|_{L^{2}H^{N}}+\left\|mMQ_{\neq}^{2}\right\|_{L^{2}H^{N}} \right)\nonumber\\
	&\quad\times \left(\left\|mM\check{K}_{\neq}^{2}\right\|_{L^{\infty}H^{N}}+\left\|mMQ_{\neq}^{2}\right\|_{L^{\infty}H^{N}}\right)\nonumber\\
	&\lesssim \varepsilon\nu^{-\frac{1}{2}} \varepsilon \nu^{-\frac{1}{6}} \varepsilon\nonumber 
	\\
	&\lesssim \varepsilon^{2}(\varepsilon\nu^{-\frac{2}{3}})\nonumber\\
	&\lesssim \varepsilon^{2}.
\end{align}
Similarly,  using \eqref{4..8}, \eqref{4..5}, \eqref{5.15}, \eqref{5..102} and Corollary \ref{cor4.1}, the term $\mathcal{NLP}Q_{\neq}^{2}(U_{\neq}^{2}, U_{\neq}^{2})$ can be bounded as
\begin{align}
	\mathcal{NLP}Q_{\neq}^{2}(U_{\neq}^{2}, U_{\neq}^{2})&=-\int_{0}^{T}\int mM  \left\langle D \right\rangle^{N} \partial_{Y}^{L} Q_{\neq}^{2}mM  \left\langle D \right\rangle^{N} \left(\partial_{Y}^{L}U_{\neq}^{2}\partial_{Y}^{L}U_{\neq}^{2} \right)_{\neq}dVdt\nonumber\\
	&\leqslant
	\left\|mM\nabla_{L}Q_{\neq}^{2}\right\|_{L^{2}H^{N}}\left\|mMQ_{\neq}^{2}\right\|_{L^{2}H^{N}}\left\|mMQ_{\neq}^{2}\right\|_{L^{\infty}H^{N}}\nonumber\\
	&\lesssim  \varepsilon\nu^{-\frac{1}{2}} \varepsilon \nu^{-\frac{1}{6}} \varepsilon\nonumber 
	\\
	&\lesssim  
	\varepsilon^{2}(\varepsilon\nu^{-\frac{2}{3}})\nonumber\\
	&\lesssim \varepsilon^{2},
\end{align}
and  $\mathcal{NLP}Q_{\neq}^{2}(U_{\neq}^{2}, U_{\neq}^{1,3})$ satisfies
\begin{align}
	\mathcal{NLP}Q_{\neq}^{2}(U_{\neq}^{2}, U_{\neq}^{1,3})&=-\int_{0}^{T}\int mM  \left\langle D \right\rangle^{N}\partial_{Y}^{L} Q_{\neq}^{2} mM  \left\langle D \right\rangle^{N}\left(\partial_{X}U_{\neq}^{2}\partial_{Y}^{L}U_{\neq}^{1}+\partial_{Z}U_{\neq}^{2}\partial_{Y}^{L}U_{\neq}^{3} \right)_{\neq}dVdt\nonumber\\
	&\leqslant
	\left\|mM\nabla_{L}Q_{\neq}^{2}\right\|_{L^{2}H^{N}}\left\|mMQ_{\neq}^{2}\right\|_{L^{\infty}H^{N}} \nonumber \\
	& \quad \times\left(\left\|mM\nabla_{L}\check{K}_{\neq}^{2}\right\|_{L^{2}H^{N}}+\left\|mM\nabla_{L}Q_{\neq}^{2}\right\|_{L^{2}H^{N}}\right)\nonumber\\
	&\lesssim
	\varepsilon\nu^{-\frac{1}{2}}\varepsilon \varepsilon \nu^{-\frac{1}{2}} \nonumber 
	\\
	&\lesssim  
	\varepsilon^{2}(\varepsilon\nu^{-1})\nonumber\\
	&\lesssim \varepsilon^{2},
\end{align}
where $\varepsilon\nu^{-1}$ sufficiently small. Finally, we focus on the terms $\mathcal{NLP}Q_{\neq}^{2}(U_{\neq}^{3}, U_{\neq}^{2})$ and $\mathcal{NLP}Q_{\neq}^{2}(U_{\neq}^{3}, U_{\neq}^{1,3})$ in  \eqref{5.13}. Combining \eqref{4..8}, \eqref{4..5}, \eqref{5.15},  \eqref{5..102} and \eqref{5.16} gives
\begin{align}
	\mathcal{NLP}Q_{\neq}^{2}(U_{\neq}^{3}, U_{\neq}^{2})&=-\int_{0}^{T}\int mM  \left\langle D \right\rangle^{N}\partial_{Y}^{L} Q_{\neq}^{2} mM  \left\langle D\right\rangle^{N}\left( \partial_{Y}^{L}U_{\neq}^{3}\partial_{Z}U_{\neq}^{2} \right)_{\neq}dVdt\nonumber\\
	&\leqslant
	\left\|mM\nabla_{L}Q_{\neq}^{2}\right\|_{L^{2}H^{N}} \left(\left\|mM\nabla_{L}\check{K}_{\neq}^{2}\right\|_{L^{2}H^{N}}+\left\|mM\nabla_{L}Q_{\neq}^{2}\right\|_{L^{2}H^{N}}\right)\left\|mM	Q_{\neq}^{2}\right\|_{L^{\infty}H^{N}}\nonumber\\
	&\lesssim
	\varepsilon\nu^{-\frac{1}{2}} \varepsilon \nu^{-\frac{1}{2}} \varepsilon\nonumber \\
	&\lesssim  \varepsilon^{2}(\varepsilon\nu^{-1})\nonumber\\
	&\lesssim \varepsilon^{2}
\end{align}
and 
\begin{align}
	\mathcal{NLP}Q_{\neq}^{2}(U_{\neq}^{3}, U_{\neq}^{1,3})&=-\int_{0}^{T}\int mM  \left\langle D \right\rangle^{N}\partial_{Y}^{L} Q_{\neq}^{2} mM  \left\langle D\right\rangle^{N}\left( \partial_{X}U_{\neq}^{3}\partial_{Z}U_{\neq}^{1}+\partial_{Z}   U_{\neq}^{3}\partial_{Z}U_{\neq}^{3} \right)_{\neq}dVdt\nonumber\\
	&\leqslant
	\left\|mM\nabla_{L}Q_{\neq}^{2}\right\|_{L^{2}H^{N}} \left(\left\|mM\check{K}_{\neq}^{2}\right\|_{L^{2}H^{N}}+\left\|mMQ_{\neq}^{2}\right\|_{L^{2}H^{N}}\right) \nonumber\\
	&\quad \times \left(\left\|mM\check{K}_{\neq}^{2}\right\|_{L^{\infty}H^{N}}+\left\|mM Q_{\neq}^{2}\right\|_{L^{\infty}H^{N}} \right)\nonumber \\
	&\lesssim
	\varepsilon\nu^{-\frac{1}{2}} \varepsilon \nu^{-\frac{1}{6}} \varepsilon\nonumber \\
	&\lesssim  \varepsilon^{2}(\varepsilon\nu^{-\frac{2}{3}})\nonumber\\
	&\lesssim\varepsilon^{2}.
\end{align}

To bound the transport term $\mathcal{T}Q_{\neq}^{2}$, we split  $\mathcal{T}Q_{\neq}^{2}$ into three parts
\begin{align}\label{6.22}
	\mathcal{T}Q_{\neq}^{2}&=-\int_{0}^{T}\int mM  \left\langle D \right\rangle^{N} Q_{\neq}^{2} mM  \left\langle D \right\rangle^{N} \Delta_{L}(U\cdot \nabla_{L}U^{2})_{\neq}dVdt\nonumber\\&=
	-\int_{0}^{T}\int mM  \left\langle D \right\rangle^{N} Q_{\neq}^{2} mM  \left\langle D \right\rangle^{N}\Delta_{L}\left( U_{0}\cdot \nabla_{L}U_{\neq}^{2}+U_{\neq}\cdot\nabla U_{0}^{2}+U_{\neq}\cdot \nabla_{L}U_{\neq}^{2}\right)_{\neq}dVdt\nonumber\\&\triangleq
	\mathcal{T}Q_{\neq}^{2}(U_{0},U_{\neq}^{2})+\mathcal{T}Q_{\neq}^{2}(U_{\neq},U_{0}^{2})+\mathcal{T}Q_{\neq}^{2}(U_{\neq},U_{\neq}^{2}).
\end{align}
Now, we use \eqref{4..8},
\eqref{4..16}--\eqref{4..17}, \eqref{5..102} and Corollary \ref{cor4.1} to estimate the term $\mathcal{T}Q_{\neq}^{2} (U_{0},U_{\neq}^{2})$
\begin{align}
	\mathcal{T}Q_{\neq}^{2}(U_{0}, U_{\neq}^{2})&=\int_{0}^{T}\int mM  \left\langle D \right\rangle^{N} \nabla_{L}Q_{\neq}^{2} mM  \left\langle D \right\rangle^{N} \nabla_{L}\left(U_{0}\cdot \nabla_{L}U_{\neq}^{2} \right)_{\neq}dVdt\nonumber\\
	&\leqslant \left\|mM\nabla_{L}Q_{\neq}^{2}\right\|_{L^{2}H^{N}}\left\|U_{0}\right\|_{L^{\infty}H^{N+1}}\left\|mMQ_{\neq}^{2}\right\|_{L^{2}H^{N}}\nonumber\\
	&\quad+
	\left\|mM\nabla_{L}Q_{\neq}^{2}\right\|_{L^{2}H^{N}}\left\|U_{0}\right\|_{L^{\infty}H^{N}}\left\|mM Q_{\neq}^{2}\right\|_{L^{2}H^{N}}\nonumber 
	\\
	&\lesssim
	\varepsilon\nu^{-\frac{1}{2}} \varepsilon  \varepsilon \nu^{-\frac{1}{6}}\nonumber 
	\\
	&\lesssim  
	\varepsilon^{2}(\varepsilon\nu^{-\frac{2}{3}})\nonumber\\
	&\lesssim\varepsilon^{2}.
\end{align}
Together  \eqref{4..17}--\eqref{4..24} with \eqref{4..8}, one obtains
\begin{align}
	\mathcal{T}Q_{\neq}^{2}(U_{\neq}, U_{0}^{2})&=\int_{0}^{T}\int mM  \left\langle D \right\rangle^{N} \nabla_{L}Q_{\neq}^{2} mM  \left\langle D \right\rangle^{N} \nabla_{L}\left(U_{\neq}\cdot \nabla_{L}U_{0}^{2} \right)_{\neq}dVdt\nonumber\\
	&\leqslant \left\|mM\nabla_{L}Q_{\neq}^{2}\right\|_{L^{2}H^{N}}\left\|U_{\neq}\right\|_{L^{2}H^{N}}\left\|U_{0}^{2}\right\|_{L^{\infty}H^{N+2}}\nonumber\\
	&\quad+
	\left\|mM\nabla_{L}Q_{\neq}^{2}\right\|_{L^{2}H^{N}}\left\|\nabla_{L} U_{\neq}^{2, 3}\right\|_{L^{2}H^{N}}\left\|U_{0}^{2}\right\|_{L^{\infty}H^{N+1}}\nonumber \\
	&\lesssim
	\varepsilon\nu^{-\frac{1}{2}}   \varepsilon \nu^{-\frac{1}{6}} \varepsilon+\varepsilon\nu^{-\frac{1}{2}}   \varepsilon \nu^{-\frac{1}{2}} \varepsilon \nonumber \\
	&\lesssim  
	\varepsilon^{2}(\varepsilon\nu^{-1})\nonumber\\
	&\lesssim\varepsilon^{2},
\end{align}
where $\varepsilon\nu^{-1}$ is chosen small enough. Analogously to $\mathcal{T}Q_{\neq}^{2}(U_{\neq}, U_{0}^{2})$, using the fact $|\widehat{\nabla_{L}}|\leqslant |m\widehat{\Delta_{L}}|$, \eqref{4.11}, \eqref{4..8},  \eqref{4..22}--\eqref{4..24} and Corollary \ref{cor4.1}  yields 
\begin{align}
	\mathcal{T}Q_{\neq}^{2}(U_{\neq}, U_{\neq}^{2})&=\int_{0}^{T}\int mM  \left\langle D \right\rangle^{N} \nabla_{L}Q_{\neq}^{2} mM  \left\langle D \right\rangle^{N} \nabla_{L}\left(U_{\neq}\cdot \nabla_{L}U_{\neq}^{2} \right)_{\neq}dVdt\nonumber\\
	&\leqslant \left\|mM\nabla_{L}Q_{\neq}^{2}\right\|_{L^{2}H^{N}}\left\|\nabla_{L}U_{\neq}\right\|_{L^{2}H^{N}}\left\|mMQ_{\neq}^{2}\right\|_{L^{\infty}H^{N}}\nonumber\\
	&\quad+
	\left\|mM\nabla_{L}Q_{\neq}^{2}\right\|_{L^{2}H^{N}}\left\|U_{\neq}\right\|_{L^{\infty}H^{N}}\left\|mM Q_{\neq}^{2}\right\|_{L^{2}H^{N}}\sup m^{-1}\nonumber 
	\\&\lesssim
	\varepsilon\nu^{-\frac{1}{2}}  \varepsilon \nu^{-\frac{1}{2}}\varepsilon+	\varepsilon\nu^{-\frac{1}{2}}  \varepsilon \varepsilon \nu^{-\frac{1}{6}}\nu^{-\frac{1}{3}}\nonumber 
	\\&\lesssim  
	\varepsilon^{2}(\varepsilon\nu^{-1})\nonumber\\&\lesssim\varepsilon^{2}.
\end{align}

The next step to be analysed is the transport term $\mathcal{T}\check{K}_{\neq}^{2}1$. To do this, we first decompose $\mathcal{T}\check{K}_{\neq}^{2}1$ into
\begin{align}\label{6.26}
	\mathcal{T}\check{K}_{\neq}^{2}1&=\int_{0}^{T}\int mM  \left\langle D \right\rangle^{N} \check{K}_{\neq}^{2} mM  \left\langle D \right\rangle^{N} i\sqrt{\frac{\beta}{\beta-1}}\partial_{Z}|\nabla_{L}|\left(U\cdot\nabla_{L}U^{1}\right)_{\neq}dVdt\nonumber\\&=
	\int_{0}^{T}\int mM  \left\langle D \right\rangle^{N} \check{K}_{\neq}^{2}mM  \left\langle D \right\rangle^{N} i\sqrt{\frac{\beta}{\beta-1}}\partial_{Z}|\nabla_{L}|\Big(U_{0}\cdot\nabla_{L}U_{\neq}^{1}\nonumber\\&\quad+U_{\neq}\cdot\nabla U_{0}^{1}+U_{\neq}\cdot\nabla_{L}U_{\neq}^{1}\Big)_{\neq}dVdt\nonumber\\&\triangleq \mathcal{T}\check{K}_{\neq}^{2}1(U_{0},U_{\neq}^{1})+\mathcal{T}\check{K}_{\neq}^{2}1(U_{\neq},U_{0}^{1})+\mathcal{T}\check{K}_{\neq}^{2}1(U_{\neq},U_{\neq}^{1}).
\end{align}
For $\mathcal{T}\check{K}_{\neq}^{2}1(U_{0},U_{\neq}^{1})$, by utilizing \eqref{4..8}, \eqref{4..5}, \eqref{4..16}--\eqref{4..22}, \eqref{5.15} and Corollary  \ref{cor4.1},  there holds that
\begin{align}
	\mathcal{T}\check{K}_{\neq}^{2}1(U_{0},U_{\neq}^{1})&=\int_{0}^{T}\int  mM  \left\langle D \right\rangle^{N} \check{K}_{\neq}^{2}  mM  \left\langle D \right\rangle^{N} i\sqrt{\frac{\beta}{\beta-1}}\partial_{Z}|\nabla_{L}|(U_{0}\cdot\nabla_{L}U_{\neq}^{1})_{\neq}dVdt\nonumber \\
	&\leqslant
	\left\|mM\nabla_{L}\check{K}_{\neq}^{2}\right\|_{L^{2}H^{N}}\left\|U_{0}\right\|_{L^{\infty}H^{N+1}}\left\|\nabla_{L}U_{\neq}^{1}\right\|_{L^{2}H^{N}}\nonumber\\
	&\quad+\left\|mM\nabla_{L}\check{K}_{\neq}^{2}\right\|_{L^{2}H^{N}} \left\|U_{0}\right\|_{L^{\infty}H^{N}}\left(\left\|mM\nabla_{L}\check{K}_{\neq}^{2}\right\|_{L^{2}H^{N}}+\left\|mM\nabla_{L}Q_{\neq}^{2}\right\|_{L^{2}H^{N}} \right)\nonumber\\
	&\lesssim \varepsilon\nu^{-\frac{1}{2}}  \varepsilon \varepsilon \nu^{-\frac{1}{2}} +  \varepsilon\nu^{-\frac{1}{2}}  \varepsilon \varepsilon \nu^{-\frac{1}{2}}\nonumber\\
	&\lesssim  \varepsilon^{2}(\varepsilon\nu^{-1})\nonumber\\
	&\lesssim\varepsilon^{2},
\end{align}
where $\varepsilon\nu^{-1}$ is chosen small enough. 
Meantime, we use \eqref{4..8}, \eqref{4..5}, \eqref{4..16}, \eqref{4..23}, \eqref{4..24}, \eqref{5..102}, \eqref{5.16} and Corollary  \ref{cor4.1} to obtain
\begin{align}
	\mathcal{T}\check{K}_{\neq}^{2}1(U_{\neq},U_{0}^{1})&=\int_{0}^{T}\int  mM  \left\langle D \right\rangle^{N} \check{K}_{\neq}^{2}  mM  \left\langle D \right\rangle^{N} i\sqrt{\frac{\beta}{\beta-1}}\partial_{Z}|\nabla_{L}|(U_{\neq}\cdot\nabla U_{0}^{1})_{\neq}dVdt\nonumber\\
	&=-\int_{0}^{T}\int  mM  \left\langle D \right\rangle^{N}|\nabla_{L}| \check{K}_{\neq}^{2}  mM  \left\langle D \right\rangle^{N} i \sqrt{\frac{\beta}{\beta-1}}\partial_{Z} (U_{\neq}^{2}\partial_{Y}U_{0}^{1}+U_{\neq}^{3}\partial_{Z}U_{0}^{1})_{\neq}dVdt  \nonumber \\
	&\leqslant
	\left\|mM\nabla_{L}\check{K}_{\neq}^{2}\right\|_{L^{2}H^{N}}\left(\left\|mMQ_{\neq}^{2}\right\|_{L^{2}H^{N}}+\left\|mM\check{K}_{\neq}^{2}\right\|_{L^{2}H^{N}}\right) \left\|U_{0}^{1}\right\|_{L^{\infty}H^{N+1}}\nonumber\\
	&\quad+\left\|mM\nabla_{L}\check{K}_{\neq}^{2}\right\|_{L^{2}H^{N}} \left\|U_{\neq}^{2,3}\right\|_{L^{\infty}H^{N}}\left\|\nabla U_{0}^{1}\right\|_{L^{2}H^{N+1}}\nonumber\\
	&\lesssim \varepsilon\nu^{-\frac{1}{2}}  \varepsilon \nu^{-\frac{1}{6}}  \varepsilon +  \varepsilon\nu^{-\frac{1}{2}}  \varepsilon \varepsilon \nu^{-\frac{1}{2}}\nonumber
	\\&\lesssim  
	\varepsilon^{2}(\varepsilon\nu^{-1})\nonumber\\&
	\lesssim\varepsilon^{2}.
\end{align}
For $\mathcal{T}\check{K}_{\neq}^{2}1(U_{\neq},U_{\neq}^{1})$, combining \eqref{4..8}, \eqref{4..5}, \eqref{5.15}, \eqref{5..102}, \eqref{5.16} with \eqref{4..22}--\eqref{4..24} gives
\begin{align}
	\mathcal{T}\check{K}_{\neq}^{2}1(U_{\neq},U_{\neq}^{1})&=\int_{0}^{T}\int  mM  \left\langle D \right\rangle^{N} \check{K}_{\neq}^{2}  mM  \left\langle D \right\rangle^{N} i\sqrt{\frac{\beta}{\beta-1}}\partial_{Z}|\nabla_{L}|(U_{\neq}\cdot\nabla_{L}U_{\neq}^{1})_{\neq}dVdt\nonumber\\
	&\leqslant\left\|mM\nabla_{L}\check{K}_{\neq}^{2}\right\|_{L^{2}H^{N}}\left(\left\|mMQ_{\neq}^{2}\right\|_{L^{\infty}H^{N}}+\left\|mM\check{K}_{\neq}^{2}\right\|_{L^{\infty}H^{N}}\right) \left\|\nabla_{L} U_{\neq}^{1}\right\|_{L^{2}H^{N}}\nonumber\\
	&\quad+\left\|mM\nabla_{L}\check{K}_{\neq}^{2}\right\|_{L^{2}H^{N}} \left\|U_{\neq}\right\|_{L^{\infty}H^{N}} 
	\left(\left\|mM\nabla_{L}Q_{\neq}^{2}\right\|_{L^{2}H^{N}}+\left\|mM\nabla_{L}\check{K}_{\neq}^{2}\right\|_{L^{2}H^{N}} \right) \nonumber\\
	&\lesssim \varepsilon\nu^{-\frac{1}{2}}  \varepsilon   \varepsilon \nu^{-\frac{1}{2}} +  \varepsilon\nu^{-\frac{1}{2}}  \varepsilon \varepsilon \nu^{-\frac{1}{2}}\nonumber
	\\&\lesssim  
	\varepsilon^{2}(\varepsilon\nu^{-1})\nonumber\\&
	\lesssim\varepsilon^{2}.
\end{align}

Finally, similar to \eqref{6.26}, we divide the transport term $\mathcal{T}\check{K}_{\neq}^{2}2$ into
\begin{align}\label{6.30}
	\mathcal{T}\check{K}_{\neq}^{2}2&=-\int_{0}^{T}\int mM  \left\langle D \right\rangle^{N} \check{K}_{\neq}^{2} mM  \left\langle D \right\rangle^{N} i\sqrt{\frac{\beta}{\beta-1}}\partial_{X}|\nabla_{L}|\left(U\cdot\nabla_{L}U^{3}\right)_{\neq}dVdt\nonumber\\&=-
	\int_{0}^{T}\int mM  \left\langle D \right\rangle^{N} \check{K}_{\neq}^{2}mM  \left\langle D \right\rangle^{N} i\sqrt{\frac{\beta}{\beta-1}}\partial_{X}|\nabla_{L}|(U_{0}\cdot\nabla_{L}U_{\neq}^{3}\nonumber\\&\quad+U_{\neq}\cdot\nabla U_{0}^{3}+U_{\neq}\cdot\nabla_{L}U_{\neq}^{3})_{\neq}dVdt\nonumber\\&\triangleq \mathcal{T}\check{K}_{\neq}^{2}2(U_{0},U_{\neq}^{3})+\mathcal{T}\check{K}_{\neq}^{2}2(U_{\neq},U_{0}^{3})+\mathcal{T}\check{K}_{\neq}^{2}2(U_{\neq},U_{\neq}^{3}).
\end{align}
Combining \eqref{4..8}, \eqref{4..5}, \eqref{5.16}, the fact $\partial_{X} U_0=0$  with \eqref{4..16}--\eqref{4..17}, we bound $\mathcal{T}\check{K}_{\neq}^{2}2(U_{0},U_{\neq}^{3})$ as
\begin{align}
	\mathcal{T}\check{K}_{\neq}^{2}2(U_{0},U_{\neq}^{3})&=\int_{0}^{T}\int  mM  \left\langle D \right\rangle^{N} |\nabla _{L}| \check{K}_{\neq}^{2}  mM  \left\langle D \right\rangle^{N} i\sqrt{\frac{\beta}{\beta-1}}\partial_{X} (U_{0}\cdot\nabla_{L}U_{\neq}^{3})_{\neq}dVdt\nonumber \\
	&\leqslant \left\|mM\nabla_{L}\check{K}_{\neq}^{2}\right\|_{L^{2}H^{N}} \left\|U_{0}\right\|_{L^{\infty}H^{N}}\left(\left\|mM\nabla_{L}\check{K}_{\neq}^{2}\right\|_{L^{2}H^{N}}+\left\|mM\nabla_{L}Q_{\neq}^{2}\right\|_{L^{2}H^{N}} \right)\nonumber\\
	&\lesssim \varepsilon\nu^{-\frac{1}{2}}  \varepsilon \varepsilon \nu^{-\frac{1}{2}} \nonumber\\
	&\lesssim  
	\varepsilon^{2}(\varepsilon\nu^{-1})\nonumber\\&
	\lesssim\varepsilon^{2}.
\end{align}
To deal with $\mathcal{T}\check{K}_{\neq}^{2}2(U_{\neq},U_{0}^{3})$,  using \eqref{4..8}, \eqref{4..5}, \eqref{4..18}, \eqref{4..22}--\eqref{4..24}, Corollary  \ref{cor4.1},  \eqref{5..102}, \eqref{5.16}  and the fact $\partial_{X} U_0^3=0$, we have
\begin{align}
	\mathcal{T}\check{K}_{\neq}^{2}2(U_{\neq},U_{0}^{3})&=\int_{0}^{T}\int  mM  \left\langle D \right\rangle^{N}|\nabla_{L}| \check{K}_{\neq}^{2}  mM  \left\langle D \right\rangle^{N} i\sqrt{\frac{\beta}{\beta-1}}\partial_{X} (U_{\neq}\cdot\nabla U_{0}^{3})_{\neq}dVdt\nonumber\\
	&  \leqslant\left\|mM\nabla_{L}\check{K}_{\neq}^{2}\right\|_{L^{2}H^{N}} \left\|\partial_{X} U_{\neq}^{2, 3} \right\|_{L^{2}H^{N}}\left\|  U_{0}^{3}\right\|_{L^{\infty}H^{N+1}}\nonumber\\
	&  \leqslant\left\|mM\nabla_{L}\check{K}_{\neq}^{2}\right\|_{L^{2}H^{N}}  \left(\left\|mM \check{K}_{\neq}^{2}\right\|_{L^{2}H^{N}}+\left\|mM Q_{\neq}^{2}\right\|_{L^{2}H^{N}} \right) \left\|  U_{0}^{3}\right\|_{L^{\infty}H^{N+1}}\nonumber\\
	&\lesssim \varepsilon\nu^{-\frac{1}{2}}  \varepsilon \nu^{-\frac{1}{6}}  \varepsilon \nonumber
	\\&\lesssim  
	\varepsilon^{2}(\varepsilon\nu^{-\frac{2}{3}})\nonumber\\&
	\lesssim\varepsilon^{2}.
\end{align}
Similar to the estimate of  $\mathcal{T}\check{K}_{\neq}^{2}2(U_{\neq},U_{0}^{3})$, we further bound $\mathcal{T}\check{K}_{\neq}^{2}2(U_{\neq},U_{\neq}^{3})$ as
\begin{align}
	\mathcal{T}\check{K}_{\neq}^{2}2(U_{\neq},U_{\neq}^{3}) &=\int_{0}^{T}\int  mM  \left\langle D \right\rangle^{N}|\nabla_{L}| \check{K}_{\neq}^{2}  mM  \left\langle D \right\rangle^{N} i\sqrt{\frac{\beta}{\beta-1}}\partial_{X} (U_{\neq}\cdot\nabla_{L}U_{\neq}^{3})_{\neq} dVdt  \nonumber \\
	&\leqslant\left\|mM\nabla_{L}\check{K}_{\neq}^{2}\right\|_{L^{2}H^{N}}\left(\left\|\partial_{X}U_{\neq}^{1,3}\right\|_{L^{\infty}H^{N}}+\left\|\partial_{X}U_{\neq}^{2}\right\|_{L^{\infty}H^{N}}\right)\left\|\nabla_{L} U_{\neq}^{3}\right\|_{L^{2}H^{N}} \nonumber\\
	&\quad+ 
	\left\|mM\nabla_{L}\check{K}_{\neq}^{2}\right\|_{L^{2}H^{N}}\left\|U_{\neq}\right\|_{L^{\infty}H^{N}}\left\|\partial_{X}\nabla_{L}U_{\neq}^{3}\right\|_{L^{2}H^{N}} \nonumber\\
	&\leqslant
	\left\|mM\nabla_{L}\check{K}_{\neq}^{2}\right\|_{L^{2}H^{N}}\left( \left\|mM\check{K}_{\neq}^{2}\right\|_{L^{\infty}H^{N}}+\left\|mMQ_{\neq}^{2}\right\|_{L^{\infty}H^{N}} \right)\left\|\nabla_{L} U_{\neq}^{3}\right\|_{L^{2}H^{N}} \nonumber\\
	&\quad
	+\left\|mM\nabla_{L}\check{K}_{\neq}^{2}\right\|_{L^{2}H^{N}}\left\|U_{\neq}\right\|_{L^{\infty}H^{N}}\left(\left\|mM\nabla_{L}\check{K}_{\neq}^{2}\right\|_{L^{2}H^{N}}+\left\|mM\nabla_{L}Q_{\neq}^{2}\right\|_{L^{2}H^{N}}  \right)   \nonumber\\&
	\lesssim\varepsilon\nu^{-\frac{1}{2}}\varepsilon  \varepsilon \nu^{-\frac{1}{2}}    \nonumber
	\\&\lesssim  
	\varepsilon^{2}(\varepsilon\nu^{-1})\nonumber\\&
	\lesssim\varepsilon^{2},
\end{align}
where $\varepsilon\nu^{-1}$ is chosen small enough.  Submitting the estimates  \eqref{5.4}, \eqref{5.5}, \eqref{6.22}, \eqref{6.26}  and \eqref{6.30} into \eqref{5.2}, these two estimates \eqref{4..8} and \eqref{4..5} hold with $10$ replaced by $5$ on the right-hand side.
\subsection{Energy estimates on $Q_{0}^{ 2}, \check{K}_{0}^{2}$}\label{4.3.2}
Recall that $Q_{0}^{ 2}$ and $ \check{K}_{0}^{2}$ satisfy the equations as follows
\begin{align} 
	\begin{cases}
		\partial_{t} Q_{0}^{2}-\nu \Delta Q_{0}^{2}+i\sqrt{\beta(\beta-1)} \partial_{Z}|\nabla|^{-1} \check{K}_{0}^{2} = \partial_{Y}\left(\partial_{i}^{L}U^{j}\partial_{j}^{L}U^{i} \right)_{0}-\Delta(U\cdot\nabla_{L}U^{2})_{0},\nonumber\\
		\partial_{t}\check{K}_{0}^{2}-\nu\Delta\check{K}_{0}^{2}-i\sqrt{\beta(\beta-1)}\partial_{Z}|\nabla|^{-1}Q_{0}^{2}=i\sqrt{\frac{\beta}{\beta-1}}\partial_{Z}|\nabla|(U\cdot \nabla_{L}U^{1})_{0}. 
	\end{cases}
\end{align}
Naturally, the $H^{N}$ energy estimate gives
\begin{align}\label{6.34}
	&\frac{1}{2}\left(\left\|Q_{0}^{2}\right\|_{H^{N}}^{2}+\left\|\check{K}_{0}^{2}\right\|_{H^{N}}^{2}\right)+\nu \left\|\nabla Q_{0}^{2}\right\|_{L^{2}H^{N}}^{2}+
	\nu\left\|\nabla\check{K}_{0}^{2}\right\|_{L^{2}H^{N}}^{2}\nonumber\\&
	=\frac{1}{2}\left(\left\|Q_{0}^{2}(0)\right\|_{H^{N}}^{2}+\left\|\check{K}_{0}^{2}(0)\right\|_{H^{N}}^{2}\right)\nonumber\\&\quad+
	\int_{0}^{T}\int  \left\langle D \right\rangle^{N} Q_{0}^{2} \left\langle D \right\rangle^{N} \left[\partial_{Y}\left(\partial_{i}^{L}U^{j}\partial_{j}^{L}U^{i}\right)_{0}-\Delta\left(U\cdot \nabla_{L}U^{2}\right)_{0}\right]dVdt\nonumber\\&\quad+\int_{0}^{T} \int \left\langle D \right\rangle^{N} \check{K}_{0}^{2} \left\langle D \right\rangle^{N} i\sqrt{\frac{\beta}{\beta-1}}\partial_{Z}|\nabla|(U\cdot \nabla_{L}U^{1})_{0}dVdt\nonumber\\&\triangleq \frac{1}{2}\left(\left\|Q_{0}^{2}(0)\right\|_{H^{N}}^{2}+\left\|\check{K}_{0}^{2}(0)\right\|_{H^{N}}^2\right)+
	\mathcal{NLP}Q_{0}^{2}+\mathcal{T}Q_{0}^{2}+\mathcal{T}\check{K}_{0}^{2}.
\end{align}
We begin with estimating the nonlinear pressure term $\mathcal{NLP}Q_{0}^{2}$ in \eqref{6.34}. We first write it as
\begin{align}\label{6.35}
	\mathcal{NLP}Q_{0}^{2}&=\int_{0}^{T}\int   \left\langle D \right\rangle^{N}  Q_{0}^{2}  \left\langle D \right\rangle^{N}\partial_{Y}
	\left(\partial_{i}^{L}U^{j}\partial_{j}^{L}U^{i} \right)_{0}dVdt\nonumber\\&=
	\int_{0}^{T}\int  \left\langle D \right\rangle^{N} Q_{0}^{2}  \left\langle D \right\rangle^{N} \partial_{Y}\left[ \partial_{i}U_{0}^{j}\partial_{j}U_{0}^{i}+\left(\partial_{i}^{L}U_{\neq}^{j}\partial_{j}^{L}U_{\neq}^{i}\right)_{0}\right]dVdt\nonumber\\&\triangleq \mathcal{NLP}Q_{0}^{2}(0,0)+ \mathcal{NLP}Q_{0}^{2}(\neq,\neq).
\end{align}
On the one hand, by using \eqref{4..4}, \eqref{4..18} and \eqref{4..17}, we have
\begin{align}
	\mathcal{NLP}Q_{0}^{2}(0,0)&=-\int_{0}^{T}\int \left\langle D \right\rangle^{N} \partial_{Y} Q_{0}^{2}\left\langle D \right\rangle^{N} \left(\partial_{i}U_{0}^{j}\partial_{j}U_{0}^{i}\right)\cdot 1_{i, j\neq 1}dVdt\nonumber\\&
	\leqslant \left\|\nabla Q_{0}^{2}\right\|_{L^{2}H^{N}}\left\|U_{0}^{2,3}\right\|_{L^{\infty}H^{N+1}}\left\|\nabla U_{0}^{2,3}\right\|_{L^{2}H^{N}}\nonumber\\&  
	\lesssim\varepsilon\nu^{-\frac{1}{2}}\varepsilon\varepsilon \nu^{-\frac{1}{2}}\nonumber
	\\&\lesssim  
	\varepsilon^{2}(\varepsilon\nu^{-1})\nonumber\\&
	\lesssim\varepsilon^{2},
\end{align}
where $\varepsilon\nu^{-1}$ is sufficiently small.  On the other hand, by \eqref{4.11}, \eqref{4..8}--\eqref{4..4}, \eqref{4..23}, \eqref{4..24}  and Corollary \ref{cor4.1}, it yields 
\begin{align}
	\mathcal{NLP}Q_{0}^{2}(\neq,\neq)&=\int_{0}^{T}\int \left\langle D \right\rangle^{N}  Q_{0}^{2}\left\langle D \right\rangle^{N}\partial_{Y} \left(\partial_{i}^{L}U_{\neq}^{j}\partial_{j}^{L}U_{\neq}^{i}\right)_0 \cdot 1_{i,j\neq 1}dVdt\nonumber\\&
	\leqslant \left\|Q_{0}^{2}\right\|_{L^{\infty}H^{N}}\left\|\nabla_{L}U_{\neq}^{2}\right\|_{L^{2}H^{N}}\left\|mMQ_{\neq}^{2}\right\|_{L^{2}H^{N}}\sup m^{-1}      \nonumber\\&\quad+\left\|\nabla Q_{0}^{2}\right\|_{L^{2}H^{N}}\left(\left\|mM\check{K}_{\neq}^{2}\right\|_{L^{2}H^{N}}+\left\|mMQ_{\neq}^{2}\right\|_{L^{2}H^{N}} \right)\sup m^{-1}\left\|mM	Q_{\neq}^{2}\right\|_{L^{\infty}H^{N}}\nonumber\\&\quad+
	\left\|Q_{0}^{2}\right\|_{L^{\infty}H^{N}}\left(\left\|mMQ_{\neq}^{2}\right\|_{L^{2}H^{N}}+\left\|mM\check{K}_{\neq}^{2}\right\|_{L^{2}H^{N}} \right)\sup m^{-1}\left\|\nabla_{L}U_{\neq}^{3}\right\|_{L^{2}H^{N}}\nonumber
	\\& \lesssim\varepsilon \varepsilon\nu^{-\frac{1}{2}}\varepsilon\nu^{-\frac{1}{6}} \nu^{-\frac{1}{3}}+ \varepsilon\nu^{-\frac{1}{2}}\varepsilon\nu^{-\frac{1}{6}}\nu^{-\frac{1}{3}} \varepsilon+\varepsilon\varepsilon \nu^{-\frac{1}{6}}\nu^{-\frac{1}{3}} \varepsilon\nu^{-\frac{1}{2}} \nonumber
	\\&\lesssim  
	\varepsilon^{2}(\varepsilon\nu^{-1})\nonumber\\&
	\lesssim\varepsilon^{2}.
\end{align}

We now turn our attention to the transport term $\mathcal{T}Q_{0}^{2}$. Notice that
\begin{align}\label{6.38}
	\mathcal{T}Q_{0}^{2}&=-\int_{0}^{T}\int \left\langle D \right\rangle^{N} Q_{0}^{2}\left\langle D \right\rangle^{N}\Delta\left(U\cdot \nabla_{L}U^{2}\right)_{0}dVdt\nonumber\\&=
	-\int_{0}^{T}\int  \left\langle D \right\rangle^{N} Q_{0}^{2} \left\langle D \right\rangle^{N} \Delta\left[\left(U_{0}\cdot\nabla U_{0}^{2}\right)+\left(U_{\neq}\cdot\nabla_{L}U_{\neq}^{2}\right)_{0}\right]dVdt\nonumber\\&\triangleq \mathcal{T}Q_{0}^{2}(0,0)+\mathcal{T}Q_{0}^{2}(\neq,\neq).
\end{align}
It follows from \eqref{4..4}, \eqref{4..18} and \eqref{4..17}   that
\begin{align}
	\mathcal{T}Q_{0}^{2}(0,0)&=\int_{0}^{T}\int \left\langle D \right\rangle^{N} \nabla Q_{0}^{2}\left\langle D \right\rangle^{N}\nabla \left(U_{0}\cdot\nabla U_{0}^{2}\right)dVdt\nonumber\\&\leqslant \left\|\nabla Q_{0}^{2}\right\|_{L^{2}H^{N}}\left\|U_{0}^{2, 3}\right\|_{L^{\infty}H^{N+1}}\left\|\nabla U_{0}^{2}\right\|_{L^{2}H^{N+1}}\nonumber\\&\lesssim 
	\varepsilon\nu^{-\frac{1}{2}}\varepsilon \varepsilon\nu^{-\frac{1}{2}} \nonumber\\&\lesssim\varepsilon^{2}(\varepsilon\nu^{-1})\nonumber\\&\lesssim\varepsilon^{2}.
\end{align}
The estimate for $\mathcal{T}Q_{0}^{2}(\neq,\neq)$ is more refined. Utilizing \eqref{4.11}, \eqref{4..8}--\eqref{4..4}, \eqref{4..22}--\eqref{4..24} and Corollary \ref{cor4.1} yields
\begin{align}
	\mathcal{T}Q_{0}^{2}(\neq,\neq)&=-\int_{0}^{T}\int \left\langle D \right\rangle^{N} Q_{0}^{2}\left\langle D \right\rangle^{N} \Delta( U_{\neq}\cdot\nabla_{L} U_{\neq}^{2})_{0}dVdt\nonumber\\&=-\int_{0}^{T}\int \left\langle D \right\rangle^{N} Q_{0}^{2} \left\langle D \right\rangle^{N} \left( \Delta_{L}U_{\neq}\cdot \nabla_{L}U_{\neq}^{2}+2\nabla_{L}U_{\neq}\cdot Q_{\neq}^{2}+U_{\neq}\cdot \nabla_{L}Q_{\neq}^{2} \right)_{0}dVdt\nonumber\\&\leqslant -\int_{0}^{T}\int \left\langle D \right\rangle^{N} Q_{0}^{2} \left\langle D \right\rangle^{N}\left( \Delta_{L}U_{\neq}\cdot\nabla_{L}U_{\neq}^{2}\right)dVdt\nonumber\\&\quad+\left\|Q_{0}^{2}\right\|_{L^{\infty}H^{N}}\left(\left\|mM\check{K}_{\neq}^{2}\right\|_{L^{2}H^{N}}+\left\|mMQ_{\neq}^{2}\right\|_{L^{2}H^{N}}\right)\sup m^{-2} \left\|mMQ_{\neq}^{2}\right\|_{L^{2}H^{N}} \nonumber\\&\quad  +\left\|Q_{0}^{2}\right\|_{L^{\infty}H^{N}}\left\|U_{\neq}\right\|_{L^{2}H^{N}}\left\|mM\nabla_{L}Q_{\neq}^{2}\right\|_{L^{2}H^{N}}\sup m^{-1}\nonumber\\&  \lesssim\varepsilon^{2}(\varepsilon\nu^{-1})+\varepsilon(\varepsilon\nu^{-\frac{1}{6}}\nu^{-\frac{1}{3}})^{2}+\varepsilon\varepsilon\nu^{-\frac{1}{6}}\varepsilon \nu^{-\frac{1}{2}}\nu^{-\frac{1}{3}}\nonumber\\&\lesssim \varepsilon^{2}(\varepsilon\nu^{-1})\nonumber\\&\lesssim
	\varepsilon^{2},
\end{align}
where  $\varepsilon\nu^{-1}$ is sufficiently small and we have used the following estimate
\begin{align}
	&\int_{0}^{T}\int \left\langle D \right\rangle^{N} Q_{0}^{2} \left\langle D \right\rangle^{N} \left(\Delta_{L}U_{\neq}\cdot\nabla_{L}U_{\neq}^{2}\right)dVdt\nonumber\\&\quad=\int_{0}^{T}\int  \left\langle D \right\rangle^{N} Q_{0}^{2}  \left\langle D \right\rangle^{N} \left[(\partial_{XX}+\partial_{ZZ})U_{\neq}^{i} \partial_{i}^{L} U_{\neq}^{2}\right]dVdt\nonumber\\&\qquad+\int_{0}^{T}\int  \left\langle D \right\rangle^{N} Q_{0}^{2} \left\langle D \right\rangle^{N}\left(\partial_{YY}^{L}U_{\neq}^i \partial_{i}^{L} U_{\neq}^{2}\right)dVdt\nonumber\\&\quad\leqslant
	\left\|Q_{0}^{2}\right\|_{L^{\infty}H^{N}}\left\|m MQ_{\neq}^{2}\right\|_{L^{2}H^{N}}\left\|mMQ_{\neq}^{2}\right\|_{L^{2}H^{N}}\nonumber\\&\qquad+\left\|  Q_{0}^{2}\right\|_{L^{\infty}H^{N}}\left(\left\|mM Q_{\neq}^{2}\right\|_{L^{2}H^{N}}+\left\|mM \check{K}_{\neq}^{2}\right\|_{L^{2}H^{N}}\right)  \left\|m M Q_{\neq}^{2}\right\|_{L^{2}H^{N}}\nonumber\\&\qquad+ \left\|\nabla Q_{0}^{2}\right\|_{L^{2}H^{N}}\left\|mM Q_{\neq}^{2}\right\|_{L^{2}H^{N}}\left\|m M Q_{\neq}^{2}\right\|_{L^{\infty}H^{N}}\nonumber\\&\qquad+\left\|\nabla Q_{0}^{2}\right\|_{L^{2}H^{N}}\left(\left\|mM Q_{\neq}^{2}\right\|_{L^{2}H^{N}}+\left\|mM \check{K}_{\neq}^{2}\right\|_{L^{2}H^{N}}\right)\sup m^{-1}\left\|m M Q_{\neq}^{2}\right\|_{L^{\infty}H^{N}}\nonumber\\&\qquad+\left\|Q_{0}^{2}\right\|_{L^{\infty}H^{N}}\left(\left\|m M Q_{\neq}^{2}\right\|_{L^{2}H^{N}}+\left\|mM \check{K}_{\neq}^{2}\right\|_{L^{2}H^{N}}\right)\sup m^{-1}\left\|m M Q_{\neq}^{2}\right\|_{L^{2}H^{N}}\nonumber\\&\quad\lesssim
	\varepsilon(\varepsilon\nu^{-\frac{1}{6}})^{2}+\varepsilon(\varepsilon\nu^{-\frac{1}{6}})^{2}+\varepsilon\nu^{-\frac{1}{2}}  \varepsilon\nu^{-\frac{1}{6}} \varepsilon +
	\varepsilon\nu^{-\frac{1}{2}}  \varepsilon\nu^{-\frac{1}{6}}\nu^{-\frac{1}{3}} \varepsilon+\varepsilon \varepsilon \nu^{-\frac{1}{6}} \nu^{-\frac{1}{3}} \varepsilon\nu^{-\frac{1}{6}}\nonumber\\&\quad\lesssim
	\varepsilon^{2}(\varepsilon\nu^{-1})\nonumber\\&\quad\lesssim
	\varepsilon^{2}.\nonumber
\end{align}

For the last term $\mathcal{T}\check{K}_{0}^{2}$ in \eqref{6.34}, we decompose it into 
\begin{align}\label{6.41}
	\mathcal{T}\check{K}_{0}^{2}&=\int_{0}^{T}\int \left\langle D \right\rangle^{N} \check{K}_{0}^{2} \left\langle D \right\rangle^{N} i\sqrt{\frac{\beta}{\beta-1}}\partial_{Z}|\nabla|\left(U\cdot\nabla_{L}U^{1}\right)_{0}dVdt\nonumber\\& =
	\int_{0}^{T}\int \left\langle D \right\rangle^{N} \check{K}_{0}^{2}\left\langle D \right\rangle^{N} i\sqrt{\frac{\beta}{\beta-1}}\partial_{Z}|\nabla|\left[ \left(U_{0}\cdot \nabla U_{0}^{1}\right)+\left(U_{\neq}\cdot \nabla_{L}U_{\neq}^{1}\right)_{0}\right]dVdt\nonumber\\&\triangleq
	\mathcal{T}\check{K}_{0}^{2}(0,0)+\mathcal{T}\check{K}_{0}^{2}(\neq,\neq).
\end{align}
Using \eqref{4..7}, \eqref{4..16}--\eqref{4..17} and the fact $\partial_{X} U_0^1=0$ leads to 
\begin{align}
	\mathcal{T}\check{K}_{0}^{2}(0,0)&=\int_{0}^{T}\int 
	\left\langle D \right\rangle^{N} \check{K}_{0}^{2} \left\langle D \right\rangle^{N} i\sqrt{\frac{\beta}{\beta-1}}\partial_{Z}|\nabla|\left(U_{0}\cdot\nabla U_{0}^{1}\right)dVdt\nonumber\\&=\int_{0}^{T}\int \left\langle D \right\rangle^{N} \check{K}_{0}^{2} \left\langle D \right\rangle^{N} i\sqrt{\frac{\beta}{\beta-1}}\partial_{Z}|\nabla_{y,z}|\left(U_{0}^{2}\partial_{Y}U_{0}^{1}+U_{0}^{3}\partial_{Z}U_{0}^{1} \right)dVdt\nonumber\\&\lesssim
	\left\|\nabla\check{K}_{0}^{2}\right\|_{L^{2}H^{N}}\left\|U_{0}^{2,3}\right\|_{L^{\infty}H^{N+1}}\left\|\nabla U_{0}^{1}\right\|_{L^{2}H^{N+1}}\nonumber\\&\lesssim 
	\varepsilon \nu^{-\frac{1}{2}} \varepsilon \varepsilon \nu^{-\frac{1}{2}} \nonumber \\&
	\lesssim  \varepsilon^2 \left( \varepsilon \nu^{-1}\right)\nonumber\\& \lesssim  \varepsilon^2,
\end{align}
where  $\varepsilon\nu^{-1}$ is sufficiently small. Making using of 
\eqref{4..8}, \eqref{4..5}, \eqref{4..7}, \eqref{4..22}--\eqref{4..24}, the relationship  between $U_{\neq}$ and $Q_{\neq}^2$, $\check{K}_{\neq}^2$ (\eqref{5..102}, \eqref{5.16}),  and the definition of $m$, we  conclude that
\begin{align}
	\mathcal{T}\check{K}_{0}^{2}(\neq,\neq)&=\int_{0}^{T}\int 
	\left\langle D \right\rangle^{N} \check{K}_{0}^{2} \left\langle D \right\rangle^{N} i\sqrt{\frac{\beta}{\beta-1}}\partial_{Z}|\nabla|\left(U_{\neq}\cdot\nabla_{L} U_{\neq}^{1}\right)_{0}dVdt\nonumber\\&=
	\int_{0}^{T}\int \left\langle D \right\rangle^{N} \check{K}_{0}^{2} \left\langle D \right\rangle^{N} i\sqrt{\frac{\beta}{\beta-1}}\partial_{Z}|\nabla_{y,z}|\left( U_{\neq}\cdot\nabla_{L}U_{\neq}^{1}\right)_{0}dVdt\nonumber
	\\&=-\int_{0}^{T}\int \left\langle D \right\rangle^{N} |\nabla_{y,z}|\check{K}_{0}^{2} \left\langle D \right\rangle^{N} i\sqrt{\frac{\beta}{\beta-1}}\left(\partial_{Z}U_{\neq}\cdot \nabla_{L}U_{\neq}^{1}+U_{\neq}\cdot \nabla_{L}\partial_{Z}U_{\neq}^{1}\right)_{0}dVdt\nonumber\\& 
	\lesssim \left\|\nabla \check{K}_{0}^{2}\right\|_{L^{2}H^{N}}\left(\left\|mM\check{K}_{\neq}^{2}\right\|_{L^{\infty}H^{N}}+\left\|mMQ_{\neq}^{2}\right\|_{L^{\infty}H^{N}}\right)\left\|\nabla_{L}U_{\neq}^{1}\right\|_{L^{2}H^{N}}\nonumber\\&\quad+\left\|\nabla\check{K}_{0}^{2}\right\|_{L^{2}H^{N}}\left\|U_{\neq}\right\|_{L^{2}H^{N}}\left(\left\|mM\check{K}_{\neq}^{2}\right\|_{L^{\infty}H^{N}}+\left\|mMQ_{\neq}^{2}\right\|_{L^{\infty}H^{N}}\right)\sup m^{-1}\nonumber\\&\lesssim 
	\varepsilon \nu^{-\frac{1}{2}} \varepsilon \varepsilon \nu^{-\frac{1}{2}}+ \varepsilon \nu^{-\frac{1}{2}}  \varepsilon\nu^{-\frac{1}{6}} \varepsilon \nu^{-\frac{1}{3}} \nonumber \\&
	\lesssim  \varepsilon^2 \left( \varepsilon \nu^{-1}\right)\nonumber\\& \lesssim  \varepsilon^2.
\end{align}
Putting the estimates \eqref{6.35}, \eqref{6.38} and \eqref{6.41} into \eqref{6.34}, we finish the proof of \eqref{4..4} and \eqref{4..7}, with $10$ replaced by $5$.

\section{Energy estimates on zero frequency velocity $U$}\label{sec7}
The purpose of this section is to derive low-frequency controls on the velocity. Specifically, we aim to prove that under the bootstrap assumption of Proposition \ref{pro4.1}, the estimate of $U_0^{i}$ $(i=1,2,3)$ is valid (i.e.,  \eqref{4..11}--\eqref{4..10}), with 10 replaced by 5 on the right-hand side. By the definition of new variable as in \eqref{2.5}, we can move from one coordinate system to another.  In particular, we have $\left\| U_0^i \right\|_{H^N}= \left\| u_0^i \right\|_{H^N}$. Therefore, it suffices to prove these estimates on $u_{0}^{i}$, rather than $U_{0}^{i}$. Moreover, in order to estimate the zero frequency of the velocity field, we need to deal with  the simple zero frequency and double zero frequency, respectively. 

Due to incompressible condition, we have $\overline{u}_{0}^{2}\equiv 0$. Thus, we here care about the handling of $\overline{u}_{0}^{1}$ and $\overline{u}_{0}^{3}$, which satisfy the following equations
\begin{align}\label{7.1} 
	\begin{cases}
		\partial_{t}\overline{u}_{0}^{1}-\nu\partial_{yy}\overline{u}_{0}^{1}=-\overline{\left(u\cdot\nabla u^{1}\right)_{0}}=-\partial_{y}\overline{\left(u^{2}u^{1}\right)_{0}}=-\partial_{y}\overline{\left(\widetilde{u}_{0}^{2}\widetilde{u}_{0}^{1}\right)}-\partial_{y}\overline{\left(u_{\neq}^{2}u_{\neq}^{1}\right)_{0}},  \\
		\partial_{t}\overline{u}_{0}^{3}-\nu\partial_{yy}\overline{u}_{0}^{3}=-\partial_{y}\overline{\left(u^{2}u^{3} \right)_{0}}=-\partial_{y}\overline{\left(\widetilde{u}_{0}^{2}\widetilde{u}_{0}^{3}\right)}-\partial_{y}\overline{\left(u_{\neq}^{2}u_{\neq}^{3}\right)_{0}}.
	\end{cases}
\end{align}

First of all, we consider the equation \eqref{7.1}$_{1}$ which gives the $H^{N+1}$ estimate for $\overline{u}_{0}^{1}$ that
\begin{align}\label{7.2}
	\frac{1}{2}\left\|\overline{u}_{0}^{1}\right\|_{H^{N+1}}^{2}+\nu\left\|\partial_{y}\overline{u}_{0}^{1}\right\|_{L^{2}H^{N+1}}^{2}&=\frac{1}{2}\left\|\overline{u}_{0\mathrm{in}}^{1}\right\|_{H^{N+1}}^{2}-\int_{0}^{T}\int  \left\langle D \right\rangle^{N+1} \overline{u}_{0}^{1}  \left\langle D \right\rangle^{N+1}\partial_{y}\overline{\left(\widetilde{u}_{0}^{2}\widetilde{u}_{0}^{1} \right)}dVdt\nonumber\\&\quad-
	\int_{0}^{T}\int  \left\langle D \right\rangle^{N+1}\overline{u}_{0}^{1}  \left\langle D \right\rangle^{N+1} \partial_{y}\overline{\left(u_{\neq}^{2}u_{\neq}^{1} \right)_{0}}dVdt\nonumber\\&\triangleq \frac{1}{2}\left\|\overline{u}_{0\mathrm{in}}^{1}\right\|_{H^{N+1}}^{2}+\mathcal{T}(\widetilde{u}_{0}^{2},\widetilde{u}_{0}^{1}) +\mathcal{T}\left(u_{\neq}^{2},u_{\neq}^{1}\right).
\end{align}
Using \eqref{4..9}, \eqref{4..16}, \eqref{4..17} and the fact $\left\|\widetilde{u}_{0}^{2}\right\|_{L^{2}H^{N+1}} \leqslant \left\|\partial_{z} \widetilde{u}_{0}^{2}\right\|_{L^{2}H^{N+1}}$, we have
\begin{align}\label{7.3}
	\mathcal{T}\left(\widetilde{u}_{0}^{2},\widetilde{u}_{0}^{1}\right)&=\int_{0}^{T}\int \left\langle D \right\rangle^{N+1}\partial_{y}\overline{u}_{0}^{1}\left\langle D \right\rangle^{N+1}\overline{\left(\widetilde{u}_{0}^{2}\widetilde{u}_{0}^{1} \right)}dVdt\nonumber\\&\leqslant 
	\left\|\partial_{y}\overline{u}_{0}^{1}\right\|_{L^{2}H^{N+1}}\left\|\nabla \widetilde{u}_{0}^{2}\right\|_{L^{2}H^{N+1}}\left\|\widetilde{u}_{0}^{1}\right\|_{L^{\infty}H^{N+1}}\nonumber\\&\lesssim
	\varepsilon \nu^{-\frac{1}{2}}\varepsilon \nu^{-\frac{1}{2}} \varepsilon\nonumber\\&  
	\lesssim  \varepsilon^2 \left( \varepsilon \nu^{-1}\right)\nonumber\\& \lesssim  \varepsilon^2,
\end{align}
where $\varepsilon\nu^{-1}$ is sufficiently small. Meanwhile, from \eqref{4..8}, \eqref{4..9}, \eqref{4..22}, \eqref{4..23}, \eqref{5..102} and Corollary \ref{cor4.1}, it implies that
\begin{align}\label{7.4}
	\mathcal{T}\left(u_{\neq}^{2}, u_{\neq}^{1}\right)&=\int_{0}^{T}\int \left\langle D \right\rangle^{N+1}\partial_{y}\overline{u}_{0}^{1}\left\langle D \right\rangle^{N+1}\overline{\left(u_{\neq}^{2} u_{\neq}^{1} \right)_{0}}dVdt\nonumber\\& 
	\leqslant\left\|\partial_{y}\overline{u}_{0}^{1}\right\|_{L^{2}H^{N+1}}\left\|u_{\neq}^{1}u_{\neq}^{2}\right\|_{L^{2}H^{N+1}}\nonumber\\&\leqslant
	\left\|\partial_{y}\overline{u}_{0}^{1}\right\|_{L^{2}H^{N+1}} \left\|u_{\neq}^{2}\right\|_{L^{\infty}H^{N+1}}\left\|u_{\neq}^{1}\right\|_{L^{2}H^{N+1}}\nonumber\\&\leqslant\left\|\partial_{y}\overline{u}_{0}^{1}\right\|_{L^{2}H^{N+1}}\left(\left\|U_{\neq}^{2}\right\|_{L^{\infty}H^{N}}+\left\|\nabla_{L} U_{\neq}^{2}\right\|_{L^{\infty}H^{N}}\right)\nonumber\\&\quad\times\left(\left\|U_{\neq}^{1}\right\|_{L^{2}H^{N}}+\left\|\nabla_{L} U_{\neq}^{1}\right\|_{L^{2}H^{N}}\right)\nonumber
	\\&\lesssim\varepsilon \nu^{-\frac{1}{2}}\varepsilon  (\varepsilon \nu^{-\frac{1}{6}}+\varepsilon \nu^{-\frac{1}{2}})  \nonumber\\&\lesssim  \varepsilon^2 \left( \varepsilon \nu^{-1}\right)\nonumber\\& \lesssim  \varepsilon^2.
\end{align}

Next, by \eqref{7.1}$_{2}$, $H^{N+1}$ estimate for $\overline{u}_{0}^{3}$ gives
\begin{align}\label{7.5}
	&\frac{1}{2}\left\|\overline{u}_{0}^{3}\right\|_{H^{N+1}}^{2}+\nu\left\|\partial_{y}\overline{u}_{0}^{3}\right\|_{L^{2}H^{N+1}}^{2}\nonumber\\&\quad=\frac{1}{2}\left\|\overline{u}_{0\mathrm{in}}^{3}\right\|_{H^{N+1}}^{2}-\int_{0}^{T}\int  \left\langle D \right\rangle^{N+1} \overline{u}_{0}^{3}  \left\langle D \right\rangle^{N+1}\partial_{y}\overline{\left(\widetilde{u}_{0}^{2}\widetilde{u}_{0}^{3} \right)}dVdt\nonumber\\&\qquad-
	\int_{0}^{T}\int  \left\langle D \right\rangle^{N+1}\overline{u}_{0}^{3}  \left\langle D \right\rangle^{N+1} \partial_{y}\overline{\left(u_{\neq}^{2}u_{\neq}^{3} \right)_{0}}dVdt\nonumber\\&\quad\triangleq \frac{1}{2}\left\|\overline{u}_{0\mathrm{in}}^{3}\right\|_{H^{N+1}}^{2}+\mathcal{T}(\widetilde{u}_{0}^{2},\widetilde{u}_{0}^{3}) +\mathcal{T}\left(u_{\neq}^{2},u_{\neq}^{3}\right).
\end{align}
For $\mathcal{T}\left(\widetilde{u}_{0}^{2},\widetilde{u}_{0}^{3}\right)$,  using \eqref{4..10}, \eqref{4..18}, \eqref{4..17} and $\left\|\widetilde{u}_{0}^{2}\right\|_{L^{2}H^{N+1}} \leqslant \left\|\partial_{z} \widetilde{u}_{0}^{2}\right\|_{L^{2}H^{N+1}}$, we obtain
\begin{align}\label{7.6}
	\mathcal{T}\left(\widetilde{u}_{0}^{2},\widetilde{u}_{0}^{3}\right)&=\int_{0}^{T}\int \left\langle D \right\rangle^{N+1}\partial_{y}\overline{u}_{0}^{3}\left\langle D \right\rangle^{N+1}\overline{\left(\widetilde{u}_{0}^{2}\widetilde{u}_{0}^{3} \right)}dVdt\nonumber\\&\leqslant 
	\left\|\partial_{y}\overline{u}_{0}^{3}\right\|_{L^{2}H^{N+1}}\left\|\nabla \widetilde{u}_{0}^{2}\right\|_{L^{2}H^{N+1}}\left\|\widetilde{u}_{0}^{3}\right\|_{L^{\infty}H^{N+1}}\nonumber\\&\lesssim
	\varepsilon \nu^{-\frac{1}{2}}\varepsilon \nu^{-\frac{1}{2}} \varepsilon\nonumber\\&  
	\lesssim  \varepsilon^2 \left( \varepsilon \nu^{-1}\right)\nonumber\\& \lesssim  \varepsilon^2.
\end{align}
Analogously to $\mathcal{T}\left(u_{\neq}^{2}, u_{\neq}^{1}\right)$, it follows from \eqref{4..8}, \eqref{4..10}, \eqref{4..23}, \eqref{4..24} and Corollary \ref{cor4.1} that
\begin{align}\label{7.7}
	\mathcal{T}\left(u_{\neq}^{2}, u_{\neq}^{3}\right)&=\int_{0}^{T}\int \left\langle D \right\rangle^{N+1}\partial_{y}\overline{u}_{0}^{3}\left\langle D \right\rangle^{N+1}\overline{\left(u_{\neq}^{2}u_{\neq}^{3} \right)_{0}}dVdt\nonumber\\& 
	\leqslant\left\|\partial_{y}\overline{u}_{0}^{3}\right\|_{L^{2}H^{N+1}}\left(\left\|U_{\neq}^{2}\right\|_{L^{\infty}H^{N}}+\left\| mM Q_{\neq}^{2}\right\|_{L^{\infty}H^{N}}\right)\nonumber\\&\quad\times\left(\left\|U_{\neq}^{3}\right\|_{L^{2}H^{N}}+\left\|\nabla_{L} U_{\neq}^{3}\right\|_{L^{2}H^{N}}\right)\nonumber
	\\&\lesssim\varepsilon \nu^{-\frac{1}{2}}\varepsilon  (\varepsilon \nu^{-\frac{1}{6}}+\varepsilon \nu^{-\frac{1}{2}})  \nonumber\\&\lesssim  \varepsilon^2 \left( \varepsilon \nu^{-1}\right)\nonumber\\& \lesssim  \varepsilon^2,
\end{align}
where $\varepsilon\nu^{-1}$ is sufficiently small.  Therefore, collecting \eqref{7.2}--\eqref{7.7}, we finish the proof of \eqref{4..9} and \eqref{4..10} in Proposition \ref{pro4.1}, with 10 replaced by 5. 

Now we need to estimate  the simple zero-frequency of the velocity. Unlike double zero frequency velocity, $\widetilde{u}_{0}^{j}$ ($j=1, 2$) satisfy the following equations
\begin{align*}
	\begin{cases}
		\partial_{t}\widetilde{u}_{0}^{1}-\nu\Delta_{y,z}\widetilde{u}_{0}^{1}+(1-\beta)\widetilde{u}_{0}^{2}=-\widetilde{\left(u\cdot\nabla u^{1} \right)_{0}}, \\
		\partial_{t}\widetilde{u}_{0}^{2}-\nu\Delta_{y,z}\widetilde{u}_{0}^{2}+\beta \partial_{zz} \Delta_{y, z}^{-1}  \widetilde{u}_{0}^{1} =-\widetilde{\left(u\cdot\nabla u^{2} \right)_{0}}+\partial_{y} \Delta_{y, z}^{-1}\widetilde{ \left(\partial_{i}u_j \partial_{j}u_i \right)_{0}},
	\end{cases}
\end{align*}
which contains the coupling term $(1-\beta)\widetilde{u}_{0}^{2}$ and $\beta \partial_{zz} \Delta_{y, z}^{-1}  \widetilde{u}_{0}^{1}$. This undermines the validity of attempting to directly close the prior assumption \eqref{4..11} and \eqref{4..12} by using the $\widetilde{u}_{0}^{j}$ equations alone. To overcome it, we first apply Duhamel's principle and \eqref{3.45}$_{1}$ to deduce that
\begin{align}\label{7.9}
	\widehat{\widetilde{u}_{0}^{1}}(t)&= e^{-\nu(\eta^{2}+l^{2})t}\widehat{\widetilde{u}_{0\mathrm{in}}^{1}}+(\beta-1)\int_{0}^{t}e^{-\nu(\eta^{2}+l^{2})(t-\tau)}\widehat{\widetilde{u}_{0}^{2}}(\tau)d\tau\nonumber\\&\quad-\int_{0}^{t}e^{-\nu(\eta^{2}+l^{2})(t-\tau)}\widehat{\widetilde{\left(u\cdot \nabla u^{1} \right)_{0}}}(\tau)d\tau\nonumber\\&
	=e^{-\nu(\eta^{2}+l^{2})t}\cos(ht)\widehat{\widetilde{u}_{0\mathrm{in}}^{1}}+\frac{\beta-1}{h}e^{-\nu(\eta^{2}+l^{2})t}\sin(ht)\widehat{\widetilde{u}_{0\mathrm{in}}^{2}}\nonumber\\&\quad -
	\int_{0}^{t}e^{-\nu(\eta^{2}+l^{2})(t-\tau)}\widehat{\widetilde{\left(u\cdot \nabla u^{1} \right)_{0}}}(\tau)d\tau,
\end{align}
where $h\triangleq \sqrt{\beta(\beta-1)}\frac{|l|}{|\eta,l|}.$
By \eqref{5.15}$_{1}$ and Plancherel's theorem, we have 
\begin{equation*}
	\left\| \nabla_{L} U_{\neq}^1  \right\|_{H^N} \leqslant \left( \left\| mM \check{K}_{\neq}^2  \right\|_{H^N}+ \left\| mM Q_{\neq}^2  \right\|_{H^N}\right)\sup m^{-1}.
\end{equation*}
Further, by definition of $h$, one have 
\begin{equation*}
	\frac{\beta-1}{h}=\sqrt{\frac{\beta-1}{\beta}} \frac{|\eta, l|}{|l|} \lesssim 1+|\eta|.
\end{equation*}
Hence, directly calculate gives
\begin{align}\label{7.10}
	\left\|\widetilde{u}_{0}^{1}\right\|_{H^{N}}^{2}=\left\|\widehat{\widetilde{u}_{0}^{1}}\right\|_{H^{N}}^{2}&\leqslant\left\|e^{-\nu(\eta^{2}+l^{2})t}\cos(ht)\widehat{\widetilde{u}_{0\mathrm{in}}^{1}}\right\|_{H^{N}}^{2}+\left\|\frac{\beta-1}{h}e^{-\nu(\eta^{2}+l^{2})t}\sin(ht)\widehat{\widetilde{u}_{0\mathrm{in}}^{2}}\right\|_{H^{N}}^{2}\nonumber\\&\quad+\left\|\int_{0}^{t}e^{-\nu(\eta^{2}+l^{2})(t-\tau)}\widehat{\widetilde{\left(u\cdot\nabla u^{1} \right)_{0}}}(\tau)d\tau\right\|_{H^{N}}^2\nonumber\\&\leqslant\left\|\widetilde{u}_{0\mathrm{in}}^{1}\right\|_{H^{N}}^{2}+\left\|\widetilde{u}_{0\mathrm{in}}^{2}\right\|_{H^{N+1}}^{2}+\int_{0}^{t}\left\|e^{-\nu(\eta^{2}+l^{2})(t-\tau)}\widehat{\widetilde{\left(u\cdot\nabla u^{1}\right)_{0}}}(\tau) \right\|_{H^{N}}^{2}d\tau\nonumber\\&\leqslant
	\left\|\widetilde{u}_{0\mathrm{in}}^{1}\right\|_{H^{N}}^{2}+\left\|\widetilde{u}_{0\mathrm{in}}^{2}\right\|_{H^{N+1}}^{2}+\int_{0}^{t}e^{-\nu(t-\tau)}\left\|
	\left(u\cdot\nabla u^{1} \right)_{0}\right\|_{H^{N}}^{2}d\tau\nonumber\\&\leqslant \left\|\widetilde{u}_{0\mathrm{in}}^{1}\right\|_{H^{N}}^{2}+\left\|\widetilde{u}_{0\mathrm{in}}^{2}\right\|_{H^{N+1}}^{2} \nonumber \\
	&\quad+\int_{0}^{t}e^{-\nu(t-\tau)}\left(\left\|
	\left(u_{0}\cdot\nabla u_{0}^{1} \right)\right\|_{H^{N}}^{2}+\left\|\left(u_{\neq}\cdot\nabla_{L}u_{\neq}^{1}\right)_{0}\right\|_{H^{N}}^{2}\right)d\tau\nonumber\\&\leqslant \left\|\widetilde{u}_{0\mathrm{in}}^{1}\right\|_{H^{N}}^{2}+\left\|\widetilde{u}_{0\mathrm{in}}^{2}\right\|_{H^{N+1}}^{2}+\int_{0}^{t}e^{-\nu(t-\tau)}\left\|u_{0}^{2,3}\right\|_{H^{N}}^{2}\left\|\nabla u_{0}^{1}\right\|_{H^{N}}^{2}d\tau\nonumber\\&\quad+
	\int_{0}^{t}e^{-\nu(t-\tau)}\left\|U_{\neq}\right\|_{H^{N}}^{2}\left\|\nabla_{L}U_{\neq}^{1}\right\|_{H^{N}}^{2}d\tau\nonumber\\&\leqslant
	\left\|\widetilde{u}_{0\mathrm{in}}^{1}\right\|_{H^{N}}^{2}+ \left\|\widetilde{u}_{0\mathrm{in}}^{ 2}\right\|_{H^{N+1}}^{2}+\left\|u_{0}^{2,3}\right\|_{L^{\infty}H^{N}}^{2}\left\|\nabla u_{0}^{1}\right\|_{L^{\infty}H^{N}}^{2}\int_{0}^{t}e^{-\nu(t-\tau)}d\tau\nonumber\\&\quad+\left\|U_{\neq}\right\|_{L^{\infty}H^{N}}^{2}\left\|\nabla_{L}U_{\neq}^{1}\right\|_{L^{\infty}H^{N}}^{2}\int_{0}^{t}e^{-\nu(t-\tau)}d\tau \nonumber \\&\lesssim  \varepsilon^2+\varepsilon^2 \varepsilon^2\nu^{-1}+\varepsilon^2\varepsilon^2\nu^{-\frac{2}{3}}\nu^{-1}\nonumber\\&  \lesssim  \varepsilon^2+  \varepsilon^2( \varepsilon^2 \nu^{-1} +\varepsilon^2 \nu^{-\frac{5}{3}})\nonumber\\& \lesssim \varepsilon^2,
\end{align}
where we have used \eqref{4.11}, \eqref{4..8}, \eqref{4..5}, \eqref{4..16}--\eqref{4..24}, and the fact $\partial_{x} u_0^1=0$.
 
As the handing of  \eqref{7.10}, we further conclude that
\begin{align}\label{7.11}
	\left\|\nabla \widetilde{u}_{0}^{1}\right\|_{L^{2}H^{N}}^2&=\int_{0}^{t}\left\|\nabla \widetilde{u}_{0}^{1}\right\|_{H^{N}}^{2}d\tau \nonumber\\&\leqslant
	\left\|\nabla \left(e^{-\nu(\eta^{2}+l^{2})t}\cos(ht)\widehat{\widetilde{u}_{0\mathrm{in}}^{1}} \right)\right\|_{L^{2}H^{N}}^{2}+\left\|\nabla\left(\frac{\beta-1}{h}e^{-\nu(\eta^{2}+l^{2})t}\sin(ht)\widehat{\widetilde{u}_{0\mathrm{in}}^{2}} \right)\right\|_{L^{2}H^{N}}^{2}\nonumber\\&\quad+
	\left\|\int_{0}^{t}\nabla \left(e^{-\nu(\eta^{2}+l^{2})(t-\tau)}\widehat{\widetilde{\left(u\cdot \nabla u^{1}\right)_{0}}}(\tau) \right)d\tau\right\|_{L^{2}H^{N}}^{2}\nonumber\\&\leqslant
	\left\|\nabla \widetilde{u}_{0\mathrm{in}}^{1}\right\|_{L^{2}H^{N}}^{2}+\left\|\nabla\widetilde{u}_{0\mathrm{in}}^{2}\right\|_{L^{2}H^{N+1}}^{2}+\left\|\int_{0}^{t}\nabla\left(e^{-\nu(\eta^{2}+l^{2})(t-\tau)}\widehat{\widetilde{\left(u\cdot\nabla u^{1}\right)_{0}}}(\tau)\right)d\tau \right\|_{L^{2}H^{N}}^{2}\nonumber\\&\triangleq
	\left\|\nabla \widetilde{u}_{0\mathrm{in}}^{1}\right\|_{L^{2}H^{N}}^{2}+\left\|\nabla \widetilde{u}_{0\mathrm{in}}^{2}\right\|_{L^{2}H^{N+1}}^{2}+\mathcal{T}(u,\nabla u^{1}).
\end{align}
For the last term $\mathcal{T}(u,\nabla u^{1})$, using \eqref{4..16}--\eqref{4..24}, Young's inequality (for the convolution of two functions) and Lemma \ref{lem2.3} yields
\begin{align}\label{7.12}
	\mathcal{T}(u,\nabla u^{1})&= \left\|\int_{0}^{t}\nabla\left(e^{-\nu(\eta^{2}+l^{2})(t-\tau)}\widehat{\widetilde{\left(u\cdot\nabla u^{1}\right)_{0}}}(\tau)\right)d\tau \right\|_{L^{2}H^{N}}^{2}\nonumber\\&\leqslant
	\left\|\int_{0}^{t}\left\|\nabla\left(e^{-\nu(\eta^{2}+l^{2})(t-\tau)} \widehat{\widetilde{\left(u\cdot\nabla u^{1}\right)_{0}}} (\tau)\right)\right\|_{H^{N}}d\tau \right\|_{L^{2}(0,T)}^{2}\nonumber\\&\lesssim
	\left\|\int_{0}^{t}\left(1+(t-\tau)^{-\frac{1}{2}\times 2}\right)\left\|\left(u\cdot\nabla u^{1}\right)_{0}(\tau)\right\|_{H^{N-1}}d\tau\right\|_{L^{2}(0,T)}^{2}\nonumber\\&\lesssim \left\|\left(u\cdot \nabla u^{1} \right)_0\right\|_{L^{1}H^{N-1}}^{2}\left(\left\|t^{-1}\right\|_{L^{2}}^{2}+1 \right)\nonumber\\&\lesssim \left\|u_{0}^{2}\partial_{y}u_{0}^{1}+u_{0}^{3}\partial_{z}u_{0}^{1}+\left(U_{\neq}\cdot \nabla_{L}U_{\neq}^{1}\right)_{0} \right\|_{L^{1}H^{N-1}}^{2}\nonumber\\&\lesssim
	\left\|\nabla u_{0}^{2,3}\right\|_{L^{2}H^{N-1}}^{2}\left\|\nabla u_{0}^{1}\right\|_{L^{2}H^{N-1}}^{2}+\left\|U_{\neq}\right\|_{L^{2}H^{N-1}}^2\left\|\nabla_{L}U_{\neq}^{1}\right\|_{L^{2}H^{N-1}}^{2}\nonumber\\&\lesssim
	\varepsilon^2\nu^{-1}	\varepsilon^2\nu^{-1} +  \varepsilon^2  \nu^{-\frac{1}{3}}  \varepsilon^2  \nu^{-1}   \nonumber\\& \lesssim \varepsilon^2\nu^{-1}(\varepsilon^2\nu^{-1}+\varepsilon^2\nu^{-\frac{1}{3}})\nonumber\\&\lesssim \varepsilon^2\nu^{-1}.
\end{align}
Choosing $\varepsilon \nu^{-1}$ small enough, together \eqref{7.10}, \eqref{7.11} and \eqref{7.12} gives
\begin{align}\label{7.13}
	\left\|\widetilde{u}_{0}^1\right\|_{L^{\infty}H^{N}}+\nu^{\frac{1}{2}}\left\|\nabla\widetilde{u}_{0}^{1}\right\|_{L^{2}H^{N}}\lesssim \varepsilon.
\end{align}

In addition, by the relationship $\Big|\widehat{\widetilde{u}_{0}^{2}}\Big|=\Big|\widehat{\Delta_{y,z}^{-1} Q_{0}^{2}}\Big|\leqslant \Big|\widehat{Q_{0}^{2}}\Big|,$ we also obtain
\begin{align}\label{7.14}
	\left\|\widetilde{u}_{0}^{2}\right\|_{L^{\infty}H^{N}}+\nu^{\frac{1}{2}}\left\|\nabla\widetilde{u}_{0}^{2}\right\|_{L^{2}H^{N}}\leqslant 
	\left\|Q_{0}^{2}\right\|_{L^{\infty}H^{N}}+\nu^{\frac{1}{2}}\left\|\nabla Q_{0}^{2}\right\|_{L^{2}H^{N}}\lesssim  \varepsilon.
\end{align}
Since $\nabla\cdot u_{0}=0$, we have $\partial_{y}u_{0}^{2}+\partial_{z}u_{0}^{3}=0.$
When $l\neq 0,$ one gets
\begin{align*}
	\widetilde{u}_{0}^{3}=-\partial_{z}^{-1}\partial_{y}\widetilde{u}_{0}^{2}=-\partial_{z}^{-1}\partial_{y}\Delta_{y,z}^{-1}Q_{0}^{2}.
\end{align*}
Hence, we deduce that
\begin{align*}
	\Big|\widehat{\widetilde{u}_{0}^{3}}\Big|\leqslant \frac{|\eta|}{|l|\left(\eta^{2}+l^{2}\right)}  \Big|\widehat{Q_{0}^{2}}\Big|\leqslant \Big|\widehat{Q_{0}^{2}}\Big|, \,  \text{ for } |l| \geqslant 1
\end{align*}
and 
\begin{align}\label{7.15}
	\left\|\widetilde{u}_{0}^{3}\right\|_{L^{\infty}H^{N}}+\nu^{\frac{1}{2}}\left\|\nabla\widetilde{u}_{0}^{3}\right\|_{L^{2}H^{N}}\leqslant 
	\left\|Q_{0}^{2}\right\|_{L^{\infty}H^{N}}+\nu^{\frac{1}{2}}
	\left\|\nabla Q_{0}^{2}\right\|_{L^{2}H^{N}}\lesssim\varepsilon.
\end{align}
Thus, \eqref{7.13}, \eqref{7.14} and \eqref{7.15} imply \eqref{4..11}--\eqref{4..13} hold with 10 replaced by 5.
\\[2mm]

\vspace{2mm} 
\noindent\textbf{Data availability statement.}
\vskip2mm

No new data were created or analysed in this study.

\vspace{4mm} \noindent\textbf{Acknowledgements.}
Xu was partially supported by the National Key R\&D Program of China (grant 2020YFA0712900) and the National Natural Science Foundation of China (grants 12171040, 11771045 and 12071069).

%
\addcontentsline{toc}{section}{References}

\end{document}